\DeclareRobustCommand{\lyxmathsym}[1]{\ifmmode\begingroup\def\b@ld{bold}
  \def\rmorbf##1{\ifx\math@version\b@ld\textbf{##1}\else\textrm{##1}\fi}
  \mathchoice{\hbox{\rmorbf{#1}}}{\hbox{\rmorbf{#1}}}
  {\hbox{\smaller[2]\rmorbf{#1}}}{\hbox{\smaller[3]\rmorbf{#1}}}
  \endgroup\else#1\fi}
\theoremstyle{plain}
\newtheorem{thm}{Theorem}[section]
  \theoremstyle{plain}
  \newtheorem{conjecture}[thm]{Conjecture}
  \theoremstyle{definition}
  \newtheorem{defn}[thm]{Definition}
  \theoremstyle{plain}
  \newtheorem{prop}[thm]{Proposition}
  \theoremstyle{plain}
  \newtheorem{cor}[thm]{Corollary}
  \theoremstyle{plain}
  \newtheorem{lem}[thm]{Lemma}
\begin{document}

\title{Artin vanishing in rigid analytic geometry}

\author{David Hansen%
\thanks{Department of Mathematics, Columbia University, 2990 Broadway, New
York NY 10027; hansen@math.columbia.edu%
}}
\maketitle
\begin{abstract}
We prove a rigid analytic analogue of the Artin vanishing theorem.
Precisely, we prove (under mild hypotheses) that the geometric étale
cohomology of any Zariski-constructible sheaf on any affinoid rigid
space $X$ vanishes in all degrees above the dimension of $X$. Along
the way, we show that branched covers of normal rigid spaces can often
be extended across closed analytic subsets, in analogy with a classical
result for complex analytic spaces. We also prove a general comparison
theorem relating the algebraic and analytic étale cohomologies of
any affinoid rigid space.

\tableofcontents{}
\end{abstract}

\section{Introduction}

Let $X\subset\mathbf{C}^{m}$ be a smooth affine variety over $\mathbf{C}$,
or more generally any complex Stein manifold. According to a classical
theorem of Andreotti and Frankel \cite{AF}, $X$ has the homotopy
type of a CW complex of real dimension $\leq\mathrm{dim}X$. In particular,
the cohomology groups $H^{i}(X,A)$ vanish for any abelian group $A$
and any $i>\mathrm{dim}X$. This vanishing theorem was significantly
generalized by Artin, who proved the following striking result.
\begin{thm}[Artin, Corollaire XIV.3.2 in \cite{SGA4v3}]
Let $X$ be an affine variety over a separably closed field $k$,
and let $\mathscr{F}$ be any torsion abelian sheaf on the étale site
of $X$. Then \[
H_{\mathrm{\acute{e}t}}^{i}(X,\mathscr{F})=0\]
for all $i>\mathrm{dim}X$.
\end{thm}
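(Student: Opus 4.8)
The plan is to argue by induction on $d=\dim X$, reducing the dimension one step at a time by fibering $X$ into affine curves over a base of dimension $d-1$ and tracking degrees through the Leray spectral sequence; the engine of the whole argument is the fact that an affine curve over a separably closed field has \'etale cohomological dimension at most $1$.

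Before inducting I would dispose of the coefficients. As $X$ is Noetherian, hence quasi-compact and quasi-separated, \'etale cohomology commutes with filtered colimits of abelian sheaves; since every torsion sheaf is the filtered colimit of its constructible subsheaves, it suffices to treat $\mathscr{F}$ constructible. Such an $\mathscr{F}$ is killed by some integer $N$, and its primary decomposition reduces us to constructible sheaves of $\mathbf{Z}/\ell^{r}$-modules for a single prime $\ell$. The base case $d=0$ is immediate: $X$ is then a finite disjoint union of copies of $\mathrm{Spec}\,k$, whose cohomology sits in degree $0$.

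For the inductive step I would invoke the local structure theory of smooth varieties, namely Artin's elementary fibrations: a diagram $X\hookrightarrow\overline{X}\hookleftarrow Z$ in which $\overline{f}\colon\overline{X}\to S$ is a smooth projective relative curve over an affine base $S$ of dimension $d-1$, the map $Z\to S$ is finite \'etale and nonempty, and $X=\overline{X}\setminus Z$. The resulting $f\colon X\to S$ has affine curves as fibers. Granting that the higher direct images $R^{q}f_{\ast}\mathscr{F}$ are again constructible torsion sheaves on $S$ and that they vanish for $q>1$, the Leray spectral sequence $E_{2}^{p,q}=H^{p}(S,R^{q}f_{\ast}\mathscr{F})\Rightarrow H^{p+q}(X,\mathscr{F})$ finishes the step: the inductive hypothesis on the affine $(d-1)$-fold $S$ kills $E_{2}^{p,q}$ for $p>d-1$, and $q\le 1$ by assumption, so $H^{i}(X,\mathscr{F})=0$ once $i>d$.

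The crux is exactly the claim just granted, and it is not formal, since for a non-proper morphism the stalks of $R^{q}f_{\ast}$ need not compute the cohomology of the fibers; this is precisely why one uses an elementary fibration rather than the crude projection produced by Noether normalization. Writing $f=\overline{f}\circ j$ with $j\colon X\hookrightarrow\overline{X}$ the open immersion, I would reduce the needed base-change and constructibility to (i) the corresponding statements for $Rj_{\ast}\mathscr{F}$, a local question along the boundary $Z$ settled by the theory of nearby cycles for the smooth relative curve $\overline{f}$, together with (ii) proper and smooth base change for $\overline{f}$; combined with the cohomological dimension bound for affine curves this yields both constructibility and the vanishing $R^{q}f_{\ast}\mathscr{F}=0$ for $q>1$. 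Two further points I expect to need care. First, elementary fibrations exist only locally and only over the smooth locus, so to reach the global, possibly singular $X$ one must prove a relative strengthening of the theorem by the same induction and glue via open--closed d\'evissage and descent; it is this relative formulation, rather than the bare statement above, that actually makes the induction close. Second, in characteristic $p$ the smooth base change input fails for the $p$-primary part, and I would instead bound the $p$-cohomological dimension of affines directly through Artin--Schreier theory.
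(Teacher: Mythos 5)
Note first that the paper does not prove this statement: it is quoted as background from SGA\,4 (Corollaire XIV.3.2), and the form the paper actually uses later (\S 3.3) is the stronger one in which $\mathrm{dim}\,X$ is replaced by $\delta(\mathscr{F})=\sup\{\mathrm{tr.deg}\,k(x)/k\mid\mathscr{F}_{\overline{x}}\neq0\}$. Measured against the SGA\,4 argument, your skeleton (reduce to constructible sheaves, fiber in relative curves, $\mathrm{cd}\leq1$ for affine curves, Leray, Artin--Schreier for the $p$-part) is the right one, but the step you defer to a ``relative strengthening\dots glued via open--closed d\'evissage and descent'' is a genuine gap, and it is the crux. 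Vanishing above the dimension is not a Zariski-local statement, and no \v{C}ech or Mayer--Vietoris argument can assemble it from an open cover without losing degrees: if $X=U\cup V$ and the bound holds on $U$, $V$ and $U\cap V$, Mayer--Vietoris only gives vanishing in degrees $>d+1$, one degree being lost per gluing, and the loss is real --- $\mathbf{P}^{1}=\mathbf{A}^{1}\cup\mathbf{A}^{1}$ has $H^{2}(\mathbf{P}^{1},\mu_{n})\neq0$ even though both charts satisfy the bound. Affineness of the total space is an irreducibly global hypothesis, and elementary fibrations exist only locally, so your induction does not close. What closes it in SGA\,4 is not a glued local statement but the $\delta$-form together with a global reduction: Noether normalization gives a finite map $X\to\mathbf{A}^{d}$, whose pushforward is exact, and one then uses the single global fibration $\mathbf{A}^{d}\to\mathbf{A}^{d-1}$ (an elementary fibration via $\mathbf{P}^{1}\times\mathbf{A}^{d-1}$). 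The heart is then Artin's bound $\delta(R^{q}f_{\ast}\mathscr{F})\leq\delta(\mathscr{F})-q$ for affine morphisms (Th\'eor\`eme XIV.3.1), needed precisely because the sheaf pushed forward to $\mathbf{A}^{d}$ is no longer locally constant, so your fiberwise computation of $R^{q}f_{\ast}$ does not apply to it; it is the invariant $\delta$, not the dimension of the support, that makes the d\'evissage through $j_{!}$-extensions terminate.

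There is a second, smaller gap: even for locally constant constructible prime-to-$p$ coefficients, your steps (i)--(ii) --- controlling $Rj_{\ast}$ along $Z$ by nearby cycles and invoking smooth base change so that stalks of $R^{q}f_{\ast}$ compute fiber cohomology --- are valid only under tameness along the boundary. In characteristic $p$ there exist wildly ramified $\mathbf{Z}/\ell$-sheaves (e.g.\ pushforwards of Artin--Schreier coverings), the Swan conductor can jump in a family, and formation of $Rj_{\ast}$ then fails to commute with base change at special points of $S$; one needs Deligne's generic base change plus a further d\'evissage over the bad locus, or Artin's direct arguments. Your treatment of the $p$-primary part is correct in outline ($\mathrm{cd}_{p}\leq1$ for affine varieties in characteristic $p$), though for non-constant constructible $\mathbf{F}_{p}$-sheaves one needs the resolution $0\to\mathscr{F}\to\mathcal{M}\xrightarrow{1-F}\mathcal{M}\to0$ with $\mathcal{M}$ coherent, not merely the constant-coefficient Artin--Schreier sequence.
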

We remind the reader that for a general $k$-variety $X$, the groups
$H_{\mathrm{\acute{e}t}}^{i}(X,\mathscr{F})$ vanish in degrees $i>2\,\mathrm{dim}X$,
and this bound is sharp.

It's natural to wonder whether there is a rigid analytic analogue
of the Artin vanishing theorem. Again, we have a general sharp vanishing
theorem due to Berkovich and Huber (cf. \cite[Corollary 4.2.6]{BerIHES},
\cite[Corollary 2.8.3]{Hub96}): for any quasicompact and quasiseparated%
\footnote{(More generally, one can allow any quasiseparated rigid space admitting
a covering by countably many quasicompact open subsets.)%
} rigid space $X$ over a complete algebraically closed nonarchimedean
field $C$, and any torsion abelian sheaf $\mathscr{F}$ on $X_{\mathrm{\acute{e}t}}$,
the cohomology group $H_{\mathrm{\acute{e}t}}^{i}(X,\mathscr{F})$
vanishes for all $i>2\,\mathrm{dim}X$. Now in rigid geometry the
\emph{affinoid} \emph{spaces} play the role of basic affine objects,
and the most naive guess for an analogue of Artin vanishing would
be that $H_{\mathrm{\acute{e}t}}^{i}(X,\mathscr{F})$ vanishes for
all affinoids $X/C$, all torsion abelian sheaves $\mathscr{F}$ on
$X_{\mathrm{\acute{e}t}}$ and all $i>\mathrm{dim}X$. Unfortunately,
after some experimentation, one discovers that this fails miserably:
there are plenty of torsion abelian sheaves on the étale site of any
$d$-dimensional affinoid with nonzero cohomology in all\emph{ }degrees
$i\in[0,2d]$. However, the following conjecture seems to be a reasonable
salvage.
\begin{conjecture}
\label{rigidartinvanishing}Let $X$ be an affinoid rigid space over
a complete algebraically closed nonarchimedean field $C$, and let
$\mathscr{G}$ be any \emph{Zariski-constructible }sheaf of $\mathbf{Z}/n\mathbf{Z}$-modules
on $X_{\mathrm{\acute{e}t}}$ for some $n$ prime to the residue characteristic
of $C$. Then\[
H_{\mathrm{\acute{e}t}}^{i}(X,\mathscr{G})=0\]
for all $i>\mathrm{dim}X$.
\end{conjecture}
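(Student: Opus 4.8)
The plan is to induct on $d=\dim X$, proving the uniform statement that $H^{i}_{\mathrm{\acute{e}t}}(X,\mathscr{G})=0$ for all $i>d$, all $d$-dimensional affinoids $X/C$, and all Zariski-constructible $\mathscr{G}$. The case $d=0$ is trivial, since such an $X$ is a finite set of geometric points. For the inductive step I would first reduce to $X$ normal: the normalization $\nu\colon\widetilde{X}\to X$ is finite (affinoid algebras being excellent), so $\nu_{*}$ is exact and computes cohomology, and the unit $\mathscr{G}\to\nu_{*}\nu^{*}\mathscr{G}$ has kernel and cokernel supported on the nowhere-dense non-normal locus $W$ with $\dim W<d$. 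The long exact sequence together with the inductive hypothesis applied to $W$ then identifies $H^{i}_{\mathrm{\acute{e}t}}(X,\mathscr{G})$ with $H^{i}_{\mathrm{\acute{e}t}}(\widetilde{X},\nu^{*}\mathscr{G})$ in degrees $i>d$, so I may assume $X$ is a normal affinoid.

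Next I would use Zariski-constructibility to strip away the sheaf. By definition there is a dense Zariski-open $j\colon U\hookrightarrow X$, which I take inside the smooth locus, on which $\mathscr{G}$ restricts to a $\mathbf{Z}/n\mathbf{Z}$-local system $\mathscr{L}$, with closed complement $i\colon Z\hookrightarrow X$ of dimension $<d$. From the excision sequence $0\to j_{!}\mathscr{L}\to\mathscr{G}\to i_{*}(\mathscr{G}|_{Z})\to 0$, the exactness of $i_{*}$, and the inductive hypothesis for $Z$, I reduce to proving $H^{i}_{\mathrm{\acute{e}t}}(X,j_{!}\mathscr{L})=0$ for $i>d$. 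To eliminate $\mathscr{L}$ I would choose a connected finite étale Galois cover $\pi\colon U'\to U$ with group $G$ trivializing $\mathscr{L}$, and then extend it to a finite map $f\colon X'\to X$ from a normal affinoid $X'$ of dimension $d$, using the extension theorem for branched covers of normal rigid spaces announced above (so that $U'=f^{-1}(U)$ and the resulting square is Cartesian). Since $f$ is finite we have $f_{*}=f_{!}$ exact and the base-change identity $f_{*}j'_{!}\cong j_{!}\pi_{*}$. Writing $M=\mathscr{L}_{\overline{u}}$ as a $\mathbf{Z}/n\mathbf{Z}[G]$-module and choosing a resolution by free modules sheafifies to a resolution of $\mathscr{L}$ by sheaves $(\pi_{*}\underline{\mathbf{Z}/n\mathbf{Z}})^{\oplus a}$; applying the exact functor $j_{!}$ and the identity above, and then the hypercohomology spectral sequence, reduces the vanishing for $j_{!}\mathscr{L}$ to the same vanishing for $j'_{!}\underline{\mathbf{Z}/n\mathbf{Z}}$ on $X'$. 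A final excision on $X'$ against its lower-dimensional boundary $Z'=X'\setminus U'$, handled by induction, distills the entire theorem down to the single assertion \[ H^{i}_{\mathrm{\acute{e}t}}(X,\underline{\mathbf{Z}/n\mathbf{Z}})=0\quad\text{for } i>d \text{ and } X \text{ a normal affinoid of dimension } d. \]

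This constant-coefficient vanishing is the heart of the matter and the step I expect to be the main obstacle. For the base case $d=1$ the Berkovich--Huber bound already gives vanishing in degrees $>2$, so the real content is $H^{2}_{\mathrm{\acute{e}t}}(X,\mathbf{Z}/n\mathbf{Z})=0$ for a smooth affinoid curve; I would obtain this from the Kummer sequence, via the vanishing (or $n$-divisibility) of the relevant Picard and Brauer groups of affinoid curves, which encodes their ``Stein-like'' nature. For $d\geq 2$ I see two routes. The first is to slice $X$ by a generic family of closed analytic hypersurfaces and run a Lefschetz-type induction, reducing the top-degree cohomology to that of $(d-1)$-dimensional sections. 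The second, and the one I find more promising given the tools at hand, is to invoke the comparison theorem relating the analytic étale cohomology of the affinoid to the algebraic étale cohomology of an associated $d$-dimensional affine scheme and then apply the classical Artin vanishing theorem recalled above. The genuine difficulty in either route is that a general normal affinoid is transcendental and need not be globally algebraizable, so making the slicing available, or making the comparison precise, for arbitrary affinoids is exactly where the work concentrates; I anticipate needing a mild hypothesis such as residue characteristic zero (to have resolution of singularities and well-behaved nearby-cycle/comparison statements) to carry the argument through in all dimensions.
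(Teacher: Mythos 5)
You are attempting to prove what the paper itself states only as a conjecture: the paper establishes it (Theorems \ref{avfull} and \ref{avconstant}) only when $C$ has characteristic zero and the pair $(X,\mathscr{G})$ descends to a complete \emph{discretely valued} field, and the general case remains open. Your reduction to constant coefficients closely parallels the paper's Section 3.2: d\'evissage to $j_{!}$ of a local system, extension of the trivializing cover to a finite cover of the normal ambient space via Theorem \ref{extendingcovers}, and exactness of finite pushforward. Your variant of resolving $\mathscr{L}$ by powers of $\pi_{*}\Lambda$ coming from a free $\Lambda[G]$-resolution of the monodromy module, with convergence of the hypercohomology spectral sequence guaranteed by the Berkovich--Huber bound $\mathrm{cd}\leq 2d$, is a legitimate alternative to the paper's device of taking a single surjection $s:\mathscr{G}\to j_{!}\mathscr{H}$, controlling $\ker s$ by Proposition \ref{twooutofthree}, and running a \emph{descending} induction on the cohomological degree starting from $2d$. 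Note, however, that essential surjectivity in Theorem \ref{extendingcovers} requires characteristic zero and provably fails in positive characteristic, so even your reduction step is restricted to $\mathrm{char}\,C=0$ (as the paper itself observes).

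The genuine gap is in the constant-coefficient case, which you rightly flag as the heart, but for which both of your proposed routes fail. The route you call most promising --- the comparison theorem plus classical Artin vanishing --- does not work: Theorem \ref{comparison} identifies $H_{\mathrm{\acute{e}t}}^{n}(\mathrm{Spec}\,A,\mathscr{F})$ with $H_{\mathrm{\acute{e}t}}^{n}(\mathrm{Spa}\,A,\mathscr{F}^{\mathrm{an}})$, but $\mathrm{Spec}\,A$ is \emph{not} an affine variety over a field (its residue fields have infinite transcendence degree over $K$), so Artin vanishing says nothing about it; if it did, the conjecture would follow trivially from Theorem \ref{comparison}, which the paper pointedly does not claim. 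The paper's actual argument for Theorem \ref{avconstant} passes instead to the special fiber $\mathcal{X}_{s}=\mathrm{Spec}\,A^{\circ}/\varpi$ of the integral model, which \emph{is} an affine variety over the residue field $k$: it combines Huber's stalkwise formula $(R^{j}\lambda_{*}\mathbf{Z}/l\mathbf{Z})_{\overline{x}}\cong H_{\mathrm{\acute{e}t}}^{j}(\mathrm{Spec}\,\mathcal{O}_{\mathcal{X},\overline{x}}[\tfrac{1}{\varpi}],\mathbf{Z}/l\mathbf{Z})$, excellence of $A^{\circ}$ and its strict henselizations (Valabrega, Greco), Gabber's affine Lefschetz theorem to bound the nonvanishing degrees by $\mathrm{ht}\,\mathfrak{p}_{x}$, and the strong ($\delta$-refined) Artin vanishing theorem on $\mathcal{X}_{s}$; this yields vanishing only above $1+\mathrm{dim}\,X$, and the extra degree is removed by a separate Cartan--Leray/tame-ramification trick over large finite extensions $L/K$. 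Every step of this chain uses that $K$ is discretely valued: over an algebraically closed $C$ the ring $C^{\circ}$ is non-Noetherian, $A^{\circ}$ is not excellent, and Gabber's theorem does not apply --- which is precisely why Conjecture \ref{rigidartinvanishing} is open. Your Lefschetz-slicing and Picard/Brauer suggestions do not engage this obstruction, so the proposal, while structurally faithful to the paper's reduction, does not constitute a proof of the statement.
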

Here for a given rigid space $X$ and Noetherian coefficient ring
$\Lambda$, we say a sheaf $\mathscr{G}$ of $\Lambda$-modules on
$X_{\mathrm{\acute{e}t}}$ is Zariski-constructible if $X$ admits
a locally finite stratification into subspaces $Z_{i}\subset X$,
each locally closed for the Zariski topology on $X$, such that $\mathscr{G}|_{Z_{i,\mathrm{\acute{e}t}}}$
is a locally constant sheaf of $\Lambda$-modules of finite type for
each $i$. Note that Zariski-constructible sheaves are overconvergent,
and so it is immaterial whether one interprets their étale cohomology
in the framework of Berkovich spaces or adic spaces (cf. \cite[Theorem 8.3.5]{Hub96}).

The main result of this paper confirms Conjecture \ref{rigidartinvanishing}
in the case where $C$ has characteristic zero and the pair $(X,\mathscr{G})$
arises via base extension from a discretely valued nonarchimedean
field.
\begin{thm}
\label{avfull}Let $X$ be an affinoid rigid space over a complete
discretely valued nonarchimedean field $K$ of characteristic zero,
and let $\mathscr{F}$ be any Zariski-constructible sheaf of $\mathbf{Z}/n\mathbf{Z}$-modules
on $X_{\mathrm{\acute{e}t}}$ for some $n$ prime to the residue characteristic
of $K$. Then the cohomology groups $H_{\mathrm{\acute{e}t}}^{i}(X_{\widehat{\overline{K}}},\mathscr{F})$
are finite for all $i$, and \[
H_{\mathrm{\acute{e}t}}^{i}(X_{\widehat{\overline{K}}},\mathscr{F})=0\]
for all $i>\mathrm{dim}X$.
\end{thm}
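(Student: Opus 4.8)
The plan is to argue by induction on $d=\dim X$, the case $d=0$ being trivial. Throughout write $\Lambda=\mathbf{Z}/n\mathbf{Z}$ and abbreviate $H^{i}(Y,-)=H_{\mathrm{\acute{e}t}}^{i}(Y_{\widehat{\overline{K}}},-)$. Since $K$ has characteristic zero, $X$ is generically smooth, so using the defining stratification of $\mathscr{F}$ I can choose a dense Zariski-open $j\colon U\hookrightarrow X$ with $U$ smooth, $\mathscr{F}|_{U}=\mathscr{L}$ locally constant of finite type, and closed complement $i\colon Z\hookrightarrow X$ with $\dim Z<d$. The excision triangle $j_{!}\mathscr{L}\to\mathscr{F}\to i_{*}(\mathscr{F}|_{Z})$, together with the inductive hypothesis applied to the affinoid $Z$ (which carries the Zariski-constructible sheaf $\mathscr{F}|_{Z}$), reduces the whole statement to showing that $H^{i}(X,j_{!}\mathscr{L})$ is finite for all $i$ and vanishes for $i>d$: indeed $H^{i}(Z,\mathscr{F}|_{Z})$ is then finite and vanishes for $i\geq d$.

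The crux is the extension by zero $j_{!}\mathscr{L}$ of a nontrivial local system. Here I would invoke the branched-cover extension theorem: the finite étale cover $U'\to U$ trivializing $\mathscr{L}$ (which exists because $\rho\colon\pi_{1}(U)\to GL_{r}(\Lambda)$ has finite image, and which I take Galois with group $G$) extends to a finite map $g\colon X'\to X$ with $X'$ normal and $U'=g^{-1}(U)$. Writing $j'\colon U'\hookrightarrow X'$, finiteness of $g$ yields $g_{*}j'_{!}=j_{!}\pi_{*}$ with $g_{*}$ exact, so that $H^{i}(X,j_{!}\pi_{*}\Lambda)\cong H^{i}(X',j'_{!}\Lambda)$. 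A free presentation $\Lambda[G]^{a}\to\Lambda[G]^{b}\to M\to 0$ of the stalk module $M$ of $\mathscr{L}$, transported through the exact local-systems functor sending $\Lambda[G]\mapsto\pi_{*}\Lambda$, yields a short exact sequence $0\to\mathscr{N}\to(\pi_{*}\Lambda)^{b}\to\mathscr{L}\to 0$ on $U$ with $\mathscr{N}$ again locally constant of finite type. One does \emph{not} try to split $\mathscr{L}$ off, since the relevant degree need not be invertible mod $n$; instead, applying $j_{!}$ and the long exact sequence, and granting the constant-coefficient case (established below), one obtains for $i>d$ an injection $H^{i}(X,j_{!}\mathscr{L})\hookrightarrow H^{i+1}(X,j_{!}\mathscr{N})$. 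Iterating the presentation produces a chain of injections in strictly increasing degree, and the Berkovich--Huber bound (vanishing for $i>2d$) forces the chain to terminate; hence $H^{i}(X,j_{!}\mathscr{L})=0$ for all $i>d$. Finiteness follows by the same device run as a descending induction on $i$, uniformly in $\mathscr{L}$: for $i>2d$ the groups vanish, and the long exact sequence then expresses $H^{i}(X,j_{!}\mathscr{L})$ as an extension built from the finite group $H^{i}(X,(\pi_{*}\Lambda)^{b})$ and $H^{i+1}(X,j_{!}\mathscr{N})$, finite by the inductive step.

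It remains to treat the single case that drives everything: the constant sheaf on a normal affinoid $X'$ of dimension $d$. Using the triangle $j'_{!}\Lambda\to\Lambda_{X'}\to i'_{*}\Lambda_{Z'}$ and the dimension induction for the lower-dimensional $Z'=X'\setminus U'$, this reduces to $H^{i}(X',\Lambda_{X'})$. At this point I would apply the algebraic--analytic comparison theorem to identify $H^{i}(X'_{\widehat{\overline{K}}},\Lambda)$ with the algebraic étale cohomology $H_{\mathrm{\acute{e}t}}^{i}(\operatorname{Spec}A'_{\widehat{\overline{K}}},\Lambda)$ of the spectrum of (the base change of) the affinoid algebra $A'$; since this is an affine scheme of dimension $d$ over the separably closed field $\widehat{\overline{K}}$, the classical Artin vanishing theorem gives $H^{i}=0$ for $i>d$.

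The finiteness input, however, I expect to be the main obstacle. Because $\operatorname{Spec}A'_{\widehat{\overline{K}}}$ is not of finite type over $\widehat{\overline{K}}$, classical constructibility does not apply, so finiteness of the constant-coefficient groups $H^{i}(X'_{\widehat{\overline{K}}},\Lambda)$ must be secured by other means --- through the comparison theorem combined with a genuine finiteness statement for affinoid étale cohomology --- and it is precisely this finiteness, feeding the descending induction above, that makes the argument close. The remaining deep inputs are the two geometric theorems used as black boxes, namely the branched-cover extension result and the comparison theorem; both encode the characteristic-zero and discrete-valuation hypotheses (via resolution of singularities) that stop the same argument from proving the conjecture in full generality.
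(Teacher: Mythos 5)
Your reduction step is essentially the paper's: the excision triangle $j_{!}\mathscr{L}\to\mathscr{F}\to i_{*}(\mathscr{F}|_{Z})$, the extension of the trivializing cover $U'\to U$ to a finite normal cover of $X$ via Theorem \ref{extendingcovers}, the identity $g_{*}j'_{!}\Lambda\cong j_{!}\pi_{*}\Lambda$, and a presentation of $\mathscr{L}$ by copies of $\pi_{*}\Lambda$ whose syzygy chain of injections terminates against the Berkovich--Huber bound $i>2d$ --- this last device is just the paper's descending induction on $i$ (its Step Four) unrolled, and is fine. But the crux of your argument fails at the constant-coefficient case. You identify $H_{\mathrm{\acute{e}t}}^{i}(X'_{\widehat{\overline{K}}},\Lambda)$ with $H_{\mathrm{\acute{e}t}}^{i}(\mathrm{Spec}\,A'_{\widehat{\overline{K}}},\Lambda)$ via the comparison theorem and then invoke ``classical Artin vanishing.'' Artin's theorem (SGA4 XIV.3.2) applies to affine schemes \emph{of finite type} over a separably closed field; $\mathrm{Spec}\,A'_{\widehat{\overline{K}}}$ is affine of Krull dimension $d$ but not of finite type, and no classical theorem bounds the \'etale cohomological dimension of such a scheme by its Krull dimension. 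You notice this obstruction yourself for finiteness but not for vanishing, where it is equally fatal. A telltale sign: your route never uses the discrete-valuation hypothesis, so it would prove Conjecture \ref{rigidartinvanishing} in full over any algebraically closed $C$, which the paper states is open. The paper's actual proof of the constant case (Theorem \ref{avconstant}) does not go through the comparison theorem at all: it first uses a Cartan--Leray trick to reduce to showing $H_{\mathrm{\acute{e}t}}^{i}(X_{L},\mathbf{Z}/l\mathbf{Z})=0$ for $i>1+\dim X$ over a discretely valued $L$ with separably closed residue field, then runs the nearby-cycles spectral sequence for $\lambda:X_{\mathrm{\acute{e}t}}\to\mathcal{X}_{s,\mathrm{\acute{e}t}}$ with $\mathcal{X}_{s}=\mathrm{Spec}(A^{\circ}/\varpi)$ --- which \emph{is} a finite type affine variety over $k$, so the strong form of Artin vanishing (with the invariant $\delta$) applies there --- and controls the stalks $(R^{j}\lambda_{*}\mathbf{Z}/l\mathbf{Z})_{\overline{x}}\cong H_{\mathrm{\acute{e}t}}^{j}(\mathrm{Spec}\,\mathcal{O}_{\mathcal{X},\overline{x}}[\tfrac{1}{\varpi}],\mathbf{Z}/l\mathbf{Z})$ via Huber's formula, excellence of $A^{\circ}$ (Valabrega, Greco), and Gabber's affine Lefschetz theorem, closing with the height/transcendence-degree count. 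None of this machinery is dispensable in your scheme; it is exactly what replaces the illegitimate application of Artin vanishing to $\mathrm{Spec}\,A'$.

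Two smaller points. First, Theorem \ref{extendingcovers} requires the base to be \emph{normal}, but your $X$ is an arbitrary affinoid; the paper handles this (Step Two) by pushing forward along a finite Noether normalization $\tau:X\to\mathbf{B}^{d}$, using that $\tau_{*}$ is exact and preserves Zariski-constructibility, so one may assume $X$ is the ball. Your argument needs this (or some substitute) before the cover-extension step. Second, the finiteness you flag as ``the main obstacle'' is in the paper an external input: Berkovich's finiteness theorem for constant coefficients over discretely valued fields (\cite[Theorem 1.1.1]{BerFiniteness}), to which the same d\'evissage reduces the general case; your descending-induction scheme for finiteness is fine once that black box is granted, but it cannot be extracted from the comparison theorem as you suggest.
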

For a slightly more general result, see Corollary \ref{steincase}.
As far as we know, this is the first progress on Conjecture \ref{rigidartinvanishing}
since Berkovich \cite{BerVanishing2} treated some cases where $\mathscr{F}=\mathbf{Z}/n\mathbf{Z}$
is constant and $X$ is assumed algebraizable in a certain sense.
In particular, using a deep algebraization theorem of Elkik \cite[Th\'eor\`eme 7]{Elkik},
Berkovich proved Conjecture \ref{rigidartinvanishing} when $\mathscr{F}$
is constant and $X$ is \emph{smooth, }which might give one some confidence
in the general conjecture.

Our proof of Theorem \ref{avfull} doesn't explicitly use any algebraization
techniques. Instead, we reduce to the special case where $\mathscr{F}$
is constant. In this situation, it turns out we can argue directly,
with fewer assumptions on $K$:
\begin{thm}
\label{avconstant}Let $X=\mathrm{Spa}\, A$ be an affinoid rigid
space over a complete discretely valued nonarchimedean field $K$.
Then\[
H_{\mathrm{\acute{e}t}}^{i}(X_{\widehat{\overline{K}}},\mathbf{Z}/n\mathbf{Z})=0\]
for all $i>\mathrm{dim}X$ and all $n$ prime to the residue characteristic
of $K$.
\end{thm}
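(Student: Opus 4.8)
The plan is to reduce everything to the case of affinoid \emph{curves} by an induction on $d=\dim X$, and to treat that bottom case by hand. Begin with harmless reductions. The étale topos of $X_{\widehat{\overline{K}}}$ is insensitive to nilpotents, so I may assume $A$ is reduced; and since $\widehat{\overline{K}}$ is algebraically closed, a choice of primitive root of unity identifies $\mathbf{Z}/n\mathbf{Z}$ with $\mu_{n}$, so it suffices to prove $H_{\mathrm{\acute{e}t}}^{i}(X_{\widehat{\overline{K}}},\mu_{n})=0$ for $i>d$. The Berkovich--Huber bound already gives vanishing for $i>2d$, so only the range $d<i\le 2d$ is at stake. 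I induct on $d$, the case $d=0$ being trivial.

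For the inductive step I would invoke Noether normalization for affinoids to produce a finite surjection $\pi\colon X\to\mathbf{B}^{d}$ onto the closed unit polydisc. As $\pi$ is finite we have $R^{q}\pi_{*}=0$ for $q>0$, whence \[ H_{\mathrm{\acute{e}t}}^{i}(X_{\widehat{\overline{K}}},\mu_{n})\cong H_{\mathrm{\acute{e}t}}^{i}(\mathbf{B}^{d}_{\widehat{\overline{K}}},\pi_{*}\mu_{n}), \] where $\pi_{*}\mu_{n}$ is a Zariski-constructible sheaf of a very special type, namely the direct image of a constant sheaf along a finite map. Now fibre the polydisc over a lower-dimensional one by a coordinate projection $p\colon\mathbf{B}^{d}\to\mathbf{B}^{d-1}$, a relative closed disc. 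The key local input is a \emph{relative} Artin vanishing for affinoid curves: for such a $p$ and for a sheaf $\mathscr{G}$ of the above type one should have $R^{q}p_{*}\mathscr{G}=0$ for $q\ge 2$, with $R^{q}p_{*}\mathscr{G}$ again Zariski-constructible. Granting this, the Leray spectral sequence $H_{\mathrm{\acute{e}t}}^{i}(\mathbf{B}^{d-1},R^{q}p_{*}\mathscr{G})\Rightarrow H_{\mathrm{\acute{e}t}}^{i+q}(\mathbf{B}^{d},\mathscr{G})$, together with the inductive hypothesis applied in dimension $d-1$ to kill $H^{i}$ for $i>d-1$, forces the abutment to vanish in total degree $>(d-1)+1=d$. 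Here I expect the extension theorem for branched covers to do the real work: controlling $R^{q}p_{*}$ of such a direct image amounts to understanding how the cover $\pi$ meets the fibres of $p$, and extending it across the finitely many bad fibres is exactly what keeps one inside the tractable class of sheaves as the dimension drops.

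Everything thus rests on the curve case $d=1$, namely $H_{\mathrm{\acute{e}t}}^{2}(X_{\widehat{\overline{K}}},\mu_{n})=0$ for a one-dimensional affinoid $X$, and this is where I expect the main difficulty. Using the extension theorem I would first dispose of the finitely many non-normal points and reduce to a smooth affinoid curve. For such a curve the Kummer sequence gives an exact sequence \[ \mathrm{Pic}(X)/n\longrightarrow H_{\mathrm{\acute{e}t}}^{2}(X_{\widehat{\overline{K}}},\mu_{n})\longrightarrow\mathrm{Br}(X)[n], \] so it suffices to show that $\mathrm{Pic}(X)$ is $n$-divisible and that the $n$-torsion of the Brauer group vanishes. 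Both are genuine facts about the geometry of smooth affinoid curves over an algebraically closed nonarchimedean field --- morally they reflect the Stein-like nature of such curves and the triviality of their Brauer groups --- and establishing them is the crux of the whole argument. As an alternative to a direct analysis one could instead feed the general comparison theorem into the picture, identifying $H_{\mathrm{\acute{e}t}}^{2}(X_{\widehat{\overline{K}}},\mu_{n})$ with the corresponding algebraic group for the excellent affine scheme $\mathrm{Spec}\,A$ of dimension one and bounding its cohomological dimension there; the appeal of this route is that it is characteristic-free and never leaves the world of finitely generated coefficient sheaves.
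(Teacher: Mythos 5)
Your reduction to $\pi_{*}\mu_{n}$ on $\mathbf{B}^{d}$ via Noether normalization is fine (and matches a step the paper itself uses, in Step Two of the reduction in \S 3.2), but the inductive mechanism you build on it has a genuine gap. The coordinate projection $p\colon\mathbf{B}^{d}\to\mathbf{B}^{d-1}$ is not proper (closed discs have boundary), so there is no base change theorem computing $(R^{q}p_{*}\mathscr{G})_{\overline{x}}$ as cohomology of a fibre, and your asserted ``relative Artin vanishing'' $R^{q}p_{*}\mathscr{G}=0$ for $q\geq2$ is unsubstantiated; worse, it is circular, since on fibres it amounts to degree-$2$ vanishing for Zariski-constructible (not constant!) sheaves on one-dimensional affinoids --- note that $\pi_{*}\mu_{n}$ restricted to a fibre of $p$ is no longer a direct image of a constant sheaf along a finite map of that fibre in any evident way, so this is the $d=1$ case of Conjecture \ref{rigidartinvanishing} itself, in families. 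Moreover, Zariski-constructibility of $R^{q}p_{*}\mathscr{G}$ is not available: the paper only \emph{conjectures} stability of Zariski-constructibility under proper pushforward (part iv of the conjecture in \S 2.1), and $p$ is not even proper. Finally, your Leray argument requires the inductive hypothesis on $\mathbf{B}^{d-1}$ for arbitrary Zariski-constructible coefficients, i.e.\ Theorem \ref{avfull} rather than Theorem \ref{avconstant}; the paper deliberately separates these two statements, deducing the former from the latter by a d\'evissage (Steps One through Four of \S 3.2) that needs Theorem \ref{extendingcovers} and the two-out-of-three property, so folding them into one induction assumes machinery you have not built.

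The fallback you offer for the curve case also fails as stated. Classical Artin vanishing does not apply to $\mathcal{X}=\mathrm{Spec}\,A$, which is excellent but not of finite type over a field, and its cohomological dimension is \emph{not} bounded by its Krull dimension: residue fields of closed points of $\mathcal{X}$ are finite extensions of $K$, so already for $A=K$ with $K/\mathbf{Q}_{p}$ finite one has $\mathrm{dim}\,\mathcal{X}=0$ but $H_{\mathrm{\acute{e}t}}^{2}(\mathcal{X},\mu_{n})=\mathrm{Br}(K)[n]\neq0$, and for a one-dimensional affinoid the finite-level cohomology can be nonzero up to degree $\mathrm{dim}\,X+\mathrm{cd}(K)$. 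The vanishing emerges only in the colimit over finite extensions $L/K$, and controlling that colimit is the entire content of the theorem. The paper's actual proof is structurally different from any dimension induction and never isolates a curve case: a Cartan--Leray/finiteness trick strips off one Galois degree, reducing to the statement $(\dagger)$ that $H_{\mathrm{\acute{e}t}}^{i}(X_{L},\mathbf{Z}/l\mathbf{Z})=0$ for $i>1+\mathrm{dim}\,X$ when $L$ has separably closed residue field; this is proved via the nearby-cycles spectral sequence over the special fibre $\mathrm{Spec}\,A^{\circ}/\varpi$, the strong form of Artin vanishing weighted by $\delta(\mathscr{F})$, Huber's purely algebraic stalk formula $(R^{j}\lambda_{*}\mathbf{Z}/l\mathbf{Z})_{\overline{x}}\cong H_{\mathrm{\acute{e}t}}^{j}\bigl(\mathrm{Spec}\,\mathcal{O}_{\mathcal{X},\overline{x}}[\tfrac{1}{\varpi}],\mathbf{Z}/l\mathbf{Z}\bigr)$, excellence of $A^{\circ}$ and its strict Henselizations (Lemma \ref{affinoidpbexcellent}, via Valabrega and Greco), and Gabber's affine Lefschetz theorem (Theorem \ref{gabbervanishing}) to bound $j$ by $\mathrm{ht}\,\mathfrak{p}_{x}$. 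As for the facts you defer in the curve case --- $n$-divisibility of $\mathrm{Pic}$ and vanishing of $\mathrm{Br}[n]$ for smooth affinoid curves --- these are themselves serious theorems, essentially Berkovich's smooth case proved via Elkik's algebraization, so your sketch has not reduced the problem to anything easier.
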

The proof of this theorem uses a number of ingredients, including
some theorems of Greco and Valabrega on excellent rings, a remarkable
formula of Huber for the stalks of the nearby cycle sheaves $R^{q}\lambda_{\ast}(\mathbf{Z}/n\mathbf{Z})$
on $\mathrm{Spec}(A^{\circ}/\varpi)_{\mathrm{et}}$, a special case
of Gabber's delicate {}``affine Lefschetz theorem'' for quasi-excellent
schemes, and the classical Artin vanishing theorem.

The reduction step involves an ingredient which seems interesting
in its own right. To explain this, we make the following definition.
\begin{defn}
Let $X$ be a normal rigid space. A \emph{cover }of $X$ is a finite
surjective map $\pi:Y\to X$ from a normal rigid space $Y$, such
that there exists some closed nowhere-dense analytic subset $Z\subset X$
with $\pi^{-1}(Z)$ nowhere-dense and such that $Y\smallsetminus\pi^{-1}(Z)\to X\smallsetminus Z$
is finite étale.
\end{defn}
We then have the following result, which seems to be new.
\begin{thm}
\label{extendingcovers}Let $X$ be a normal rigid space over a complete
nonarchimedean field $K$, and let $Z\subset X$ be any closed nowhere-dense
analytic subset. Then the restriction functor\begin{eqnarray*}
\left\{ \begin{array}{c}
\mathrm{covers\, of\,}X\\
\mathrm{\acute{e}tale\, over\,}X\smallsetminus Z\end{array}\right\}  & \to & \left\{ \begin{array}{c}
\mathrm{finite\,\acute{e}tale\, covers}\\
\mathrm{of\,}X\smallsetminus Z\end{array}\right\} \\
Y & \mapsto & Y\times_{X}(X\smallsetminus Z)\end{eqnarray*}
is fully faithful. Moreover, if $K$ has characteristic zero, it is
an equivalence of categories; in other words, any finite étale cover
of $X\smallsetminus Z$ extends uniquely to a cover of $X$.
\end{thm}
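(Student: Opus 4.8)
The plan is to establish full faithfulness first, in arbitrary characteristic, and then to produce the extension by normalization under the hypothesis $\mathrm{char}\,K=0$. Write $U:=X\smallsetminus Z$. For faithfulness, suppose $f,g\colon Y_{1}\to Y_{2}$ are two $X$-morphisms of covers restricting to the same morphism over $U$. Since $Y_{2}\to X$ is finite, hence separated, the locus on which $f$ and $g$ agree is the preimage of the (closed) relative diagonal, so it is a closed analytic subset of $Y_{1}$; it contains $Y_{1}\smallsetminus\pi_{1}^{-1}(Z)$, which is dense because $\pi_{1}^{-1}(Z)$ is nowhere-dense and $Y_{1}$ is reduced, whence $f=g$. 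For fullness, given an $U$-morphism $h\colon Y_{1}|_{U}\to Y_{2}|_{U}$, I would form its graph $\Gamma_{h}\subset(Y_{1}\times_{X}Y_{2})|_{U}$ and take the (reduced) closure $\overline{\Gamma_{h}}$ inside the finite $Y_{1}$-space $Y_{1}\times_{X}Y_{2}$. The first projection $p_{1}\colon\overline{\Gamma_{h}}\to Y_{1}$ is then finite, and it restricts to an isomorphism over $U$ (as $\overline{\Gamma_{h}}|_{U}=\Gamma_{h}$), so it is finite and birational. Passing to the normalization $N$ of $\overline{\Gamma_{h}}$ and using that $Y_{1}$ is normal, one finds that $N\to Y_{1}$ is a finite birational morphism onto a normal space, hence an isomorphism. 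Composing its inverse with the second projection $\overline{\Gamma_{h}}\to Y_{2}$ yields an $X$-morphism $Y_{1}\to Y_{2}$ restricting to $h$ over $U$.

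For essential surjectivity when $\mathrm{char}\,K=0$, the construction will be local and canonical, so I would first treat the affinoid case and then glue using the uniqueness just proved. Choose an admissible covering of $X$ by normal affinoids $X_{i}=\mathrm{Spa}\,A_{i}$; full faithfulness forces the local extensions to agree canonically on overlaps and to satisfy the cocycle condition, so it suffices to extend a given finite \'etale cover $W\to U$ over each $X_{i}$. Fix such an $A=A_{i}$, normal, with total ring of fractions $K(A)$. Because $U\cap X_{i}$ is dense and $W\to U$ is finite \'etale, restricting to the minimal primes of $A$ produces a well-defined generic fibre, namely a finite \'etale $K(A)$-algebra $L$. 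I would then set $B$ to be the integral closure of $A$ in $L$ and take $Y_{i}=\mathrm{Spa}\,B$.

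The crux is that $B$ is finite over $A$. Here I would invoke that affinoid algebras over a field of characteristic zero are excellent, in particular universally Japanese, so that their normalizations are finite. Granting this, $\mathrm{Spa}\,B\to\mathrm{Spa}\,A$ is a finite morphism from a normal affinoid, and since integral closure commutes with the flat localization $U\cap X_{i}\hookrightarrow X_{i}$ one gets that $B|_{U\cap X_{i}}$ is the normalization of $\mathcal{O}_{U\cap X_{i}}$ in $L$. But $W|_{U\cap X_{i}}$, being finite \'etale over a normal space with total fraction ring $L$, is itself normal and integrally closed in $L$, hence coincides with that normalization; thus $Y_{i}|_{U}\cong W|_{U}$ as covers, and $\pi^{-1}(Z)$ is nowhere-dense because $B$ is reduced. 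Gluing the $Y_{i}$ by full faithfulness produces the required cover $Y\to X$, and uniqueness is exactly the fully faithful assertion.

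I expect the main obstacle to be precisely the finiteness of the normalization $B$ over $A$, together with its compatibility with passage to $U$ — that is, the fact that the branched cover extends as a genuinely \emph{finite} (coherent) object rather than merely as a map of locally ringed spaces. This is the step where excellence of affinoid algebras and the characteristic-zero hypothesis enter, in direct analogy with the role of the Grauert--Remmert finiteness theorem in the complex-analytic case. Once finiteness is in hand, the remaining points (normality of $Y$, the identification $Y|_{U}\cong W$, and the gluing) are formal consequences of the universal property of normalization and of the full faithfulness established in the first step.
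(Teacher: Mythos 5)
Your full-faithfulness argument is essentially sound and close in spirit to the paper's, which deduces it directly from Bartenwerfer's Hebbarkeitssatz; note only that your fullness step quietly uses the fact that a finite map onto a normal rigid space which is an isomorphism over the complement of a nowhere-dense analytic subset is an isomorphism, and in the rigid setting this is itself proved by the same bounded-extension theorem (sections integral over the structure sheaf are power-bounded, hence extend across $Z$), so this half is fine modulo making that dependence explicit.

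The essential-surjectivity argument, however, has a fatal gap at the sentence claiming that ``restricting to the minimal primes of $A$ produces a well-defined generic fibre, namely a finite \'etale $K(A)$-algebra $L$.'' A finite \'etale cover $W\to U$ of the rigid space $U=X\smallsetminus Z$ is a purely analytic object: $U$ is not affinoid, the minimal primes of $A$ are not points of $U$ in any sense that lets you form a fibre of $W$, and there is no a priori homomorphism from the \'etale fundamental group of the analytic space $U$ to $\mathrm{Gal}(\overline{K(A)}/K(A))$ classifying $W$. Producing such an $L$ --- that is, showing $W$ is algebraic, or meromorphic along $Z$ --- is precisely equivalent to the theorem being proved, so the proposal is circular at its crux; once $L$ exists, taking the integral closure of $A$ in $L$ is indeed routine (and is exactly how the paper proves the easy \emph{schemes} version, Lemma \ref{extendingschemecovers}). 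A concrete litmus test: nothing in your argument genuinely uses $\mathrm{char}\,K=0$, since affinoid algebras are excellent, hence universally Japanese, in every characteristic; your proof would therefore establish essential surjectivity in characteristic $p$ as well, where it provably fails --- e.g.\ an Artin--Schreier cover $y^{p}-y=\sum_{n\geq1}a_{n}T^{-n}$ of the punctured disc, with infinitely many nonzero $a_{n}$ of suitably decaying absolute value, is finite \'etale but extends to no cover of the disc, exactly because it admits no meromorphic generic fibre at the puncture. This is why the paper must do real work at this point: it applies Temkin's embedded resolution for quasi-excellent $\mathbf{Q}$-schemes to reduce to a smooth $X'$ in which the boundary becomes a strict normal crossings divisor, extends the pulled-back cover there by L\"utkebohmert's theorem (whose local extension lemma, combined with Kiehl's tubular neighborhoods, is where characteristic zero and tameness genuinely enter), and then descends back to $X$ via Kiehl's proper coherence theorem, normalization, and a Zariski-closure argument.
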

We remind the reader that in the schemes setting, the analogue of
the equivalence in Theorem \ref{extendingcovers} is an easy exercise
in taking normalizations, and holds essentially whenever the base
scheme $X$ is Nagata, while for complex analytic spaces the problem
was solved by Stein and Grauert-Remmert in the 50's, cf. \cite{DethloffGrauert}.

Let us say something about the proof. Full faithfulness is an easy
consequence of Bartenwerfer's rigid analytic version of Riemann's
Hebbarkeitssatz, which says that bounded functions on normal rigid
spaces extend uniquely across nowhere-dense closed analytic subsets.
Essential surjectivity in characteristic zero is more subtle; indeed,
it provably fails in positive characteristic. When $X$ is smooth
and $Z$ is a strict normal crossings divisor, however, essential
surjectivity was proved by Lütkebohmert in his work \cite{Lutkebohmert}
on Riemann's existence problem.%
\footnote{Although curiously, Lütkebohmert doesn't explicitly state the result
in his paper, nor does he discuss full faithfulness.%
} We reduce the general case to Lütkebohmert's result using recent
work of Temkin on embedded resolution of singularities for quasi-excellent
schemes in characteristic zero

As another application of this circle of ideas, we prove the following
very general comparison result.
\begin{thm}
\label{comparison}Let $X=\mathrm{Spa}\, A$ be any affinoid rigid
space, and set $\mathcal{X}=\mathrm{Spec}\, A$, so there is a natural
morphism of sites $\mu_{X}:X_{\mathrm{\acute{e}t}}\to\mathcal{X}_{\mathrm{\acute{e}t}}$.
Then for any torsion abelian sheaf $\mathscr{F}$ on $\mathcal{X}_{\mathrm{\acute{e}t}}$,
the natural comparison map\[
H_{\mathrm{\acute{e}t}}^{n}(\mathcal{X},\mathscr{F})\to H_{\mathrm{\acute{e}t}}^{n}(X,\mu_{X}^{\ast}\mathscr{F})\]
is an isomorphism for all $n\geq0$.
\end{thm}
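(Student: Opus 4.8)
The plan is to deduce the statement from the stronger assertion that the adjunction unit $\mathscr{F}\to R\mu_{X\ast}\mu_{X}^{\ast}\mathscr{F}$ is an isomorphism in $D^{+}(\mathcal{X}_{\mathrm{\acute{e}t}})$ for every torsion abelian sheaf $\mathscr{F}$. Indeed, one has $R\Gamma(X,\mu_{X}^{\ast}\mathscr{F})=R\Gamma(\mathcal{X},R\mu_{X\ast}\mu_{X}^{\ast}\mathscr{F})$, and the comparison map of the theorem is exactly $R\Gamma(\mathcal{X},-)$ applied to the unit; so if the unit is an isomorphism, we are done in every degree. Since $\mathcal{X}=\mathrm{Spec}\,A$ is Noetherian and $X=\mathrm{Spa}\,A$ is quasicompact and quasiseparated, both $\mu_{X}^{\ast}$ and $R\mu_{X\ast}$ commute with filtered colimits; as every torsion abelian sheaf on $\mathcal{X}_{\mathrm{\acute{e}t}}$ is the filtered colimit of its constructible subsheaves, I would reduce immediately to the case where $\mathscr{F}$ is a constructible sheaf of $\mathbf{Z}/n\mathbf{Z}$-modules for a single integer $n$.

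Whether the unit is an isomorphism in $D^{+}(\mathcal{X}_{\mathrm{\acute{e}t}})$ can be tested on stalks at the geometric points of $\mathcal{X}$, since the \'etale topos has enough points. For a geometric point $\bar{x}$ lying over $x\in\mathrm{Spec}\,A$, the stalk of $R^{q}\mu_{X\ast}\mu_{X}^{\ast}\mathscr{F}$ is the filtered colimit $\varinjlim_{(\mathcal{U},\bar{u})}H_{\mathrm{\acute{e}t}}^{q}(\mathcal{U}^{\mathrm{an}},\mu^{\ast}\mathscr{F})$ over \'etale neighborhoods $\mathcal{U}\to\mathcal{X}$ of $\bar{x}$, where $\mathcal{U}^{\mathrm{an}}$ is the relative analytification over $X$. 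The whole theorem thus reduces to the local statement that this colimit vanishes for $q>0$ and recovers the stalk $\mathscr{F}_{\bar{x}}$ for $q=0$, i.e. that analytification is locally acyclic for $\mu_{X}$. Here it is convenient to simplify the coefficients: by the standard structure theory of constructible sheaves one expresses $\mathscr{F}$ in terms of finite pushforwards $(g)_{\ast}\underline{M}$ with $g\colon\mathcal{Y}\to\mathcal{X}$ finite and $M$ finite, and since a finite algebra over the affinoid $A$ is again affinoid and $\mu_{X}^{\ast}(g)_{\ast}\cong (g^{\mathrm{an}})_{\ast}\mu_{Y}^{\ast}$ for finite $g$, the computation for $(g)_{\ast}\underline{M}$ is governed by the case of a constant sheaf on the affinoid $\mathcal{Y}$. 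It therefore suffices to establish local acyclicity with constant coefficients.

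I expect this local acyclicity to be the main obstacle. Writing $\mathcal{O}_{\mathcal{X},\bar{x}}^{\mathrm{sh}}$ for the strict henselization, the colimit above is the \'etale cohomology of the analytification of the pro-(strictly henselian) scheme $\mathrm{Spec}\,\mathcal{O}_{\mathcal{X},\bar{x}}^{\mathrm{sh}}$, and the required vanishing amounts to comparing its \'etale-local structure with that of the algebraic strict henselization. The essential inputs are the excellence of affinoid algebras, due to Greco and Valabrega, which guarantees that these strict henselizations are excellent and that normalization behaves well; Elkik's approximation theorem together with the invariance of the finite \'etale site and of torsion \'etale cohomology under passage from an excellent henselian local ring to its completion; and the identification of the completed strict henselization with the relevant analytic-local ring. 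Granting these, the stalks of the unit are isomorphisms, hence so is the unit, and the comparison isomorphism follows. Finally, the theorem permits arbitrary torsion coefficients, including the residue characteristic, so the most delicate point alongside the local acyclicity itself is to ensure that the completion-invariance inputs remain available for $p$-torsion; the argument is arranged to use only the invariance of the \'etale topos under henselization and completion, which is what makes no invertibility hypothesis on $n$ necessary.
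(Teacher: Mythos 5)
Your global strategy---prove that the adjunction unit $\mathscr{F}\to R\mu_{X\ast}\mu_{X}^{\ast}\mathscr{F}$ is an isomorphism by checking on stalks at geometric points of $\mathcal{X}$---is a genuinely different route from the paper's, which never argues stalkwise. The paper reduces by d\'evissage and induction on dimension to sheaves of the form $j_{!}(\text{locally constant})$ on a normal affinoid, trivializes the local system by a finite \'etale cover of $\mathcal{U}$, extends that cover to a finite normal cover of $\mathcal{X}$ by normalization (using that $A$ is Nagata), and compares the two cohomological descent spectral sequences along this cover and its analytification---via the observation that surjective qcqs maps of rigid spaces are universally of cohomological descent, plus proper base change on the $E_{1}$-terms---so that everything collapses onto Huber's Corollary 3.2.3 for \emph{constant} coefficients, which is used as a black box. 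Your opening reduction to constructible sheaves is the same as the paper's, and your further reduction to constant coefficients via finite pushforwards $g_{\ast}\underline{M}$ (with $\mu_{X}^{\ast}g_{\ast}\cong g_{\ast}^{\mathrm{an}}\mu_{Y}^{\ast}$ and exactness of $g_{\ast}^{\mathrm{an}}$) is sound in outline and parallel in spirit to the paper's descent step.

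The genuine gap is the ``local acyclicity'' claim that carries all the weight. The stalk in question is $\varinjlim_{(\mathcal{U},\bar{u})}H_{\mathrm{\acute{e}t}}^{q}(\mathcal{U}^{\mathrm{an}},\Lambda)$, and you must show it vanishes for $q>0$. Two problems. First, the spaces $\mathcal{U}^{\mathrm{an}}$ are not quasicompact (already $\mathcal{U}=\mathrm{Spec}\,A[1/f]$ analytifies to the non-quasicompact Zariski-open $X\smallsetminus V(f)$), so Huber's continuity theorems for filtered limits do not apply to identify the colimit with the cohomology of any ``analytification of $\mathrm{Spec}\,\mathcal{O}_{\mathcal{X},\bar{x}}^{\mathrm{sh}}$''; no limit space with point-like cohomology is available here. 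Second, and more fundamentally, \'etale neighborhoods of $\bar{x}$ shrink $X$ only \emph{algebraically}: taking $\mathcal{U}=\mathcal{X}$ itself, the required vanishing forces every class in $H^{q}_{\mathrm{\acute{e}t}}(X,\Lambda)$ of the whole affinoid to die after pullback along some algebraic \'etale map. For $q=1$ this is exactly the equivalence of finite \'etale sites of $\mathrm{Spec}\,A$ and $\mathrm{Spa}\,A$ (the Elkik-type input you invoke), since a torsor is trivialized by the cover it defines; but for $q\geq2$ cohomology classes are not torsors, there is no trivialization-by-covers mechanism, and your sketch supplies no substitute---this is precisely where some induction is unavoidable (Huber's, in the constant case, or the paper's descent argument). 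The excellence and completion-invariance inputs you list compare the algebraic henselian local ring with its completion; the missing, and hard, link is between the analytic colimit and either algebraic object. In effect your plan amounts to reproving Huber's Corollary 3.2.3 from scratch (Huber does use Elkik approximation, but with substantial further work), and as written the central vanishing is asserted rather than proved.
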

When $\mathscr{F}$ is a \emph{constant} sheaf, this is proved in
Huber's book; we reduce the general case of Theorem \ref{comparison} to Huber's result, using the schemes version of Theorem \ref{extendingcovers} together
with a cohomological descent argument. In particular, we use the fact
that any surjective qcqs morphism of rigid analytic spaces is universally
of cohomological descent relative to the \'etale topology, which seems
to be a new observation.

We end the introduction with a conjecture, which would give some further
justification for the definition of Zariski-constructible sheaves.
\begin{conjecture}
In the notation of the previous theorem, the pullback functor \[
\mu_{X}^{\ast}:\mathrm{Sh}(\mathcal{X}_{\mathrm{\acute{e}t}},\Lambda)\to\mathrm{Sh}(X_{\mathrm{\acute{e}t}},\Lambda)\]
induces an equivalence of categories from constructible étale sheaves
on $\mathcal{X}$ to Zariski-constructible étale sheaves on $X$.
\end{conjecture}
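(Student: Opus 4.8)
The plan is to show $\mu_X^{\ast}$ is (i) well-defined on the indicated subcategories, (ii) fully faithful, and (iii) essentially surjective, working throughout with a torsion Noetherian coefficient ring, say $\Lambda=\mathbf{Z}/n\mathbf{Z}$, and (as in Theorem \ref{extendingcovers}) assuming $K$ has characteristic zero, which is where the cover-extension input becomes an \emph{equivalence} rather than merely a fully faithful functor. For (i): a constructible sheaf on $\mathcal{X}=\mathrm{Spec}\,A$ is locally constant of finite type along a finite stratification by locally closed subschemes $\mathcal{Z}_{i}$. Since $A$ is Noetherian, closed subschemes of $\mathcal{X}$ correspond via the analytic Nullstellensatz to closed analytic subsets of $X$, so each $\mathcal{Z}_{i}$ has an analytic avatar $Z_{i}\subset X$ that is Zariski-locally-closed, and $\mu_X^{\ast}$ carries locally constant finite-type sheaves to locally constant finite-type sheaves; hence $\mu_X^{\ast}\mathscr{F}$ is Zariski-constructible.

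For full faithfulness I would first observe that for constructible $\mathscr{F}$ the natural map $\mu_X^{\ast}\mathscr{Hom}_{\mathcal{X}}(\mathscr{F},\mathscr{G})\to\mathscr{Hom}_{X}(\mu_X^{\ast}\mathscr{F},\mu_X^{\ast}\mathscr{G})$ is an isomorphism, as one checks on stalks using that $\mathscr{F}$ is finitely presented at every geometric point. Taking global sections and feeding the torsion sheaf $\mathscr{Hom}_{\mathcal{X}}(\mathscr{F},\mathscr{G})$ into the comparison theorem (Theorem \ref{comparison}) yields $\mathrm{Hom}_{\mathcal{X}}(\mathscr{F},\mathscr{G})\xrightarrow{\sim}\mathrm{Hom}_{X}(\mu_X^{\ast}\mathscr{F},\mu_X^{\ast}\mathscr{G})$. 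Running the same argument with $R\mathscr{Hom}$ in place of $\mathscr{Hom}$ and applying Theorem \ref{comparison} to the resulting (torsion) cohomology sheaves upgrades this to isomorphisms $\mathrm{Ext}^{q}_{\mathcal{X}}(\mathscr{F},\mathscr{G})\xrightarrow{\sim}\mathrm{Ext}^{q}_{X}(\mu_X^{\ast}\mathscr{F},\mu_X^{\ast}\mathscr{G})$ in all degrees; this refinement is what will let me transport extension data in the final step.

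Essential surjectivity I would prove by Noetherian induction on $\mathrm{dim}\,X$. Given Zariski-constructible $\mathscr{G}$, I may assume $X$ reduced and pick a dense Zariski-open $U\subset X$, smooth over $K$ on each component, on which $\mathscr{G}$ is locally constant; writing $j:U\hookrightarrow X$ and $i:Z=X\smallsetminus U\hookrightarrow X$, consider $0\to j_{!}(\mathscr{G}|_{U})\to\mathscr{G}\to i_{\ast}(\mathscr{G}|_{Z})\to 0$. The subcategory of algebraizable sheaves contains $i_{\ast}(\mathscr{G}|_{Z})$ by the inductive hypothesis on the lower-dimensional affinoid $Z$ (closed pushforward commutes with $\mu_X^{\ast}$); it contains $j_{!}(\mathscr{G}|_{U})$ once $\mathscr{G}|_{U}$ is algebraized, since $j_{!}$ also commutes with $\mu_X^{\ast}$ by the matching of open/closed decompositions from (i); and it is closed under the extension realized by the $\mathrm{Ext}^{1}$-comparison above. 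So everything reduces to algebraizing a locally constant finite-type $\Lambda$-sheaf on a connected normal Zariski-open $U$. Such a sheaf is a finite continuous $\pi_{1}^{\mathrm{\acute{e}t}}(U)$-representation trivialized by a finite �tale cover, so the problem becomes showing that analytification induces a compatible isomorphism $\pi_{1}^{\mathrm{\acute{e}t}}(U^{\mathrm{alg}})\cong\pi_{1}^{\mathrm{\acute{e}t}}(U)$, where $U^{\mathrm{alg}}\subset\mathcal{X}$ is the algebraic open corresponding to $U$.

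This comparison of fundamental groups is the heart of the matter and the step I expect to be the main obstacle. I would factor it through branched covers: by Theorem \ref{extendingcovers} the finite �tale covers of $U$ coincide with the covers of $X$ that are �tale over $U$, and each is $\mathrm{Spa}\,B$ for a module-finite normal $A$-algebra $B$; on the algebraic side, finite �tale covers of $U^{\mathrm{alg}}$ extend by normalization to normal finite covers of $\mathcal{X}$, again module-finite normal $A$-algebras. What remains is a GAGA-type equivalence identifying module-finite normal $A$-algebras on the two sides, the essential point being that, because $A$ is excellent, normalization and the formation of the analytic/complete local rings are compatible, so finiteness and normality of a finite $A$-algebra are detected identically by $\mathrm{Spec}$ and by $\mathrm{Spa}$. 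Granting this, the two Galois categories of finite �tale covers agree, the fundamental groups match, the locally constant sheaf on $U$ is algebraized, and feeding this back through the d�vissage completes essential surjectivity. The delicate points to be verified are precisely this excellence-based GAGA for normal finite covers (the analytic analogue of the scheme-theoretic normalization exercise), the compatibility of the equivalence of Theorem \ref{extendingcovers} with algebraic normalization, and the finiteness hypotheses needed to commute $R\mathscr{Hom}$ with $\mu_X^{\ast}$ in the $\mathrm{Ext}$-comparison.
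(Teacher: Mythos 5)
First, be aware that the statement you set out to prove is stated in the paper only as a \emph{conjecture}: the author gives no proof, remarking merely that, with Theorem \ref{extendingcovers} in hand, the characteristic zero case reduces to a comparison of sheaf Exts on $\mathcal{X}_{\mathrm{\acute{e}t}}$ and $X_{\mathrm{\acute{e}t}}$. Your outline reconstructs exactly that envisioned reduction: the well-definedness of $\mu_{X}^{\ast}$ on the indicated subcategories, the open/closed d\'evissage, and the algebraization of locally constant sheaves via an equivalence $\mathcal{U}_{\mathrm{f\acute{e}t}}\cong U_{\mathrm{f\acute{e}t}}$, obtained by matching module-finite normal $A$-algebras on the two sides using Theorem \ref{extendingcovers}, Lemma \ref{extendingschemecovers}, and the excellence of affinoid algebras. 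That portion of your plan is sound and agrees with the paper's sketch (one caveat: Theorem \ref{extendingcovers} requires the ambient space to be normal, so for general reduced $X$ you should pass to the normalization $X^{\nu}$, over which your smooth dense Zariski-open $U$ sits as an open subspace; also note the cover-extension input genuinely requires characteristic zero, so your argument cannot address the conjecture in positive residue characteristic beyond what the paper itself envisions).

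The genuine gap is the Ext-comparison, which you flag as a ``delicate point to be verified'' but which is in fact the entire remaining content of the conjecture --- precisely the step the paper leaves open. Your proposed verification ``on stalks'' works at the level of $\mathscr{H}om$: for constructible $\mathscr{F}$ the stalk of the Hom sheaf is Hom of stalks on both sides, and combined with Theorem \ref{comparison} applied to $\mathscr{H}om_{\mathcal{X}}(\mathscr{F},\mathscr{G})$ this plausibly yields full faithfulness. But for $q\geq1$ the stalk of $\mathscr{E}xt^{q}(\mathscr{F},\mathscr{G})$ at a geometric point $\overline{x}$ is \emph{not} $\mathrm{Ext}^{q}$ of stalks; it is an Ext group computed over the strictly henselian local scheme (respectively, over the corresponding local object on the adic side), and these local objects are genuinely different on the two sides of $\mu_{X}$. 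Comparing them is a problem of the same nature as Huber's formula $(\ast)$ in the proof of Theorem \ref{avconstant}, which identifies stalks of nearby cycles with \'etale cohomology of $\mathrm{Spec}\,\mathcal{O}_{\mathcal{X},\overline{x}}[\tfrac{1}{\varpi}]$ --- a deep input, not a routine finiteness check. Since your d\'evissage for essential surjectivity requires surjectivity (and, for the functor to remain fully faithful on the extension, injectivity) of $\mathrm{Ext}^{1}_{\mathcal{X}}\to\mathrm{Ext}^{1}_{X}$ in order to transport the class of $0\to j_{!}(\mathscr{G}|_{U})\to\mathscr{G}\to i_{\ast}(\mathscr{G}|_{Z})\to0$ to the algebraic side, the argument as written does not close: it successfully re-derives the paper's reduction of the conjecture to the sheaf-Ext comparison, but does not resolve that comparison, and hence does not constitute a proof.
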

With Theorem \ref{extendingcovers} in hand, one can reduce this conjecture
in the characteristic zero case to a comparison of sheaf Exts on $\mathcal{X}_{\mathrm{\acute{e}t}}$
and $X_{\mathrm{\acute{e}t}}$. The essential point in this reduction
is that while open subsets $\mathcal{U}\subset\mathcal{X}$ are in
tautological bijection with Zariski-open subsets $U\subset X$, it's
\emph{a priori} difficult to understand the essential image of the
analytification functor $\mathcal{U}_{\mathrm{f\acute{e}t}}\mapsto U_{\mathrm{f\acute{e}t}}$
for a given $U$, since $ $$U$ is essentially never affinoid; however,
as an easy consequence of Theorem \ref{extendingcovers}, one gets
an equivalence of categories $\mathcal{U}_{\mathrm{f\acute{e}t}}\cong U_{\mathrm{f\acute{e}t}}$
(at least when $X$ is normal). Using this, it's easy to deduce that
if $\mathscr{F}$ is a Zariski-constructible sheaf on $X$ and $X=\coprod Z_{i}$
is a suitably nice partition such that $\mathscr{F}|_{Z_{i}}$ is
locally constant, than each $\mathscr{F}|_{Z_{i}}$ is the pullback
of a locally constant sheaf on the corresponding subset $\mathcal{Z}_{i}\subset\mathcal{X}$. 

Finally, let us note that our argument for the reduction of Theorem
\ref{avfull} to Theorem \ref{avconstant} reduces Conjecture \ref{rigidartinvanishing}
to the special case where $\mathscr{F}$ is constant, at least for
$C$ of characteristic zero. With Theorem \ref{avconstant} in hand,
this might put the general characteristic zero case of Conjecture
\ref{rigidartinvanishing} within reach of some approximation argument.

\subsubsection*{Remarks on terminology and conventions. }

Our convention is that a {}``nonarchimedean field'' is a topological
field whose topology is defined by a \emph{nontrivial }nonarchimedean
valuation of rank one. If $K$ is any nonarchimedean field, we regard
rigid analytic spaces over $K$ as a full subcategory of the category
of adic spaces locally of topologically finite type over $\mathrm{Spa}(K,K^{\circ})$.
If $A$ is any topological ring, we write $A^{\circ}$ for the subset
of power-bounded elements; if $A$ is a Huber ring, we write $\mathrm{Spa\,}A$
for $\mathrm{Spa}(A,A^{\circ})$. 

We use the terms {}``Zariski-closed subset'' and {}``closed analytic
subset'' interchangeably, and we always regard Zariski-closed subsets
of rigid spaces as rigid spaces via the induced reduced structure.
Finally, we remind the reader that in rigid geometry the phrases {}``dense
Zariski-open subset'' and {}``Zariski-dense open subset'' have
very different meanings.

\subsection*{Acknowledgments}

Several years ago, Giovanni Rosso and John Welliaveetil asked me whether
anything was known about the $l$-cohomological dimension of affinoid
rigid spaces, and I'd like to thank them very heartily for this initial
stimulation. 

Johan de Jong listened to some of my early ideas about these problems
and pointed me to the {}``Travaux de Gabber'' volumes in response
to my desperate search for tools. Christian Johansson planted in my
head the usefulness of the excellence property for convergent power
series rings. Peter Scholze read a rough initial draft and pointed
out that certain arguments worked without change in equal characteristic
zero; Scholze also inadvertently inspired a crucial trick (cf. footnote
7). Brian Conrad offered several helpful comments and corrections
on a later draft. To all of these mathematicians, I'm very grateful.

\section{Preliminaries}

\subsection{Zariski-constructible sheaves on rigid spaces}

In this section we discuss some basics on Zariski-constructible étale
sheaves on rigid spaces. For simplicity we fix a nonachimedean field
$K$ and a Noetherian coefficient ring $\Lambda$; until further notice,
roman letters $X,Y,...$ denote rigid spaces over $K$, and {}``Zariski-constructible''
means a Zariski-constructible sheaf of $\Lambda$-modules, as defined
in the introduction, on the étale site of some rigid space $X$ over
$K$. We'll also say that an object $\mathscr{F}\in D^{b}(X_{\mathrm{\acute{e}t}},\Lambda)$
is Zariski-constructible if it has Zariski-constructible cohomology
sheaves.
\begin{prop}
Let $\mathscr{F}$ be a Zariski-constructible sheaf on a rigid space
$X$.

\emph{i. }If $f:Y\to X$ is any morphism of rigid spaces, then $f^{\ast}\mathscr{F}$
is Zariski-constructible.

\emph{ii. }If $i:X\to W$ is a closed immersion, then $i_{\ast}\mathscr{F}$
is Zariski-constructible.

\emph{iii. }If $j:X\to V$ is a Zariski-open immersion and $\mathscr{F}$
is locally constant, then $j_{!}\mathscr{F}$ is Zariski-constructible.\end{prop}
\begin{proof}
Trivial.
\end{proof}
We will often verify Zariski-constructibility via the following \emph{dévissage},
which is a trivial consequence of the previous proposition.
\begin{prop}
Let $X$ be any rigid space, and let $\mathscr{F}$ be a sheaf of
$\Lambda$-modules on $X_{\mathrm{\acute{e}t}}$. The following are
equivalent:

\emph{i. }$\mathscr{F}$ is Zariski-constructible.

\emph{ii. }There is some dense Zariski-open subset $j:U\to X$ with
closed complement $i:Z\to X$ such that $i^{\ast}\mathscr{F}$ is
Zariski-constructible and $j^{\ast}\mathscr{F}$ is locally constant
of finite type.
\end{prop}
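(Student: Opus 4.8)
The plan is to prove the two implications separately, since (ii) $\Rightarrow$ (i) is essentially formal while (i) $\Rightarrow$ (ii) rests on a single geometric input: that a Zariski-constructible sheaf becomes locally constant after restriction to a suitable dense Zariski-open subset. Throughout I would freely use the previous proposition, whose part (i) guarantees that pullback, and in particular restriction to any locally closed subspace, preserves Zariski-constructibility.

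For (ii) $\Rightarrow$ (i) I would argue directly from the definition. By hypothesis $Z$ carries a locally finite stratification $Z = \bigsqcup_k W_k$ into locally closed subspaces with each $\mathscr{F}|_{W_k}$ locally constant of finite type; since $Z$ is Zariski-closed in $X$, each $W_k$ is still locally closed in $X$. Adjoining the open stratum $U$, on which $j^{\ast}\mathscr{F}$ is locally constant of finite type by assumption, yields a stratification $X = U \sqcup \bigsqcup_k W_k$ into locally closed subspaces with locally constant finite-type restrictions. It then remains to check local finiteness: a point of $U$ has a neighborhood disjoint from the closed set $Z$, hence meeting only the stratum $U$, while near a point of $Z$ local finiteness is inherited from that of $\{W_k\}$. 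Hence $\mathscr{F}$ is Zariski-constructible.

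For (i) $\Rightarrow$ (ii) the requirement that $i^{\ast}\mathscr{F}$ be Zariski-constructible is automatic from part (i) of the previous proposition, so the entire content is to produce a dense Zariski-open $j : U \to X$ with $j^{\ast}\mathscr{F}$ locally constant of finite type. This is the step I expect to be the main obstacle, and I would handle it through the theory of irreducible components of rigid spaces. First I would pass to the dense Zariski-open $X^{\circ} = X \setminus \bigcup_{\alpha \ne \beta}(X_{\alpha} \cap X_{\beta})$ obtained by deleting the nowhere-dense, locally finite family of pairwise intersections of the irreducible components $\{X_{\alpha}\}$ of $X$; on $X^{\circ}$ the traces $X_{\alpha}^{\circ} := X_{\alpha} \cap X^{\circ}$ are disjoint, irreducible, and clopen. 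Restricting the given stratification $X = \bigsqcup_i Z_i$ to each irreducible $X_{\alpha}^{\circ}$, exactly one stratum $Z_{i(\alpha)}$ meets $X_{\alpha}^{\circ}$ in a dense subset: existence holds because an irreducible rigid space is not a locally finite union of nowhere-dense Zariski-closed subsets, and uniqueness because two dense strata, being locally closed and dense, would be disjoint nonempty Zariski-opens of the irreducible $X_{\alpha}^{\circ}$. Being locally closed and dense, $V_{\alpha} := Z_{i(\alpha)} \cap X_{\alpha}^{\circ}$ is Zariski-open in $X_{\alpha}^{\circ}$, hence in $X$, and $\mathscr{F}|_{V_{\alpha}}$ is locally constant of finite type. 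Finally I would set $U = \bigcup_{\alpha} V_{\alpha}$; since the $V_{\alpha}$ lie in the disjoint clopen pieces $X_{\alpha}^{\circ}$, this is a disjoint union of Zariski-opens, so $U$ is Zariski-open, $\mathscr{F}|_{U}$ is locally constant of finite type, and $U$ is dense because $\overline{V_{\alpha}} \supseteq X_{\alpha}$ for every $\alpha$. Taking $Z = X \setminus U$ then supplies the data required in (ii), completing the proof.
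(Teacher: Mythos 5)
The paper itself records this proposition without proof, calling it a trivial consequence of the preceding stability proposition, so your writeup is more detailed than the paper's; and your overall strategy --- (ii)~$\Rightarrow$~(i) by splicing the stratification of $Z$ together with the open stratum $U$, and (i)~$\Rightarrow$~(ii) by passing to the irreducible components, isolating the unique stratum whose trace is dense in each component, and taking the union of these traces --- is the right mechanism and is essentially what the paper leaves implicit.

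There is, however, one step you must not leave as stated: the inference ``$V_{\alpha}$ is Zariski-open in $X_{\alpha}^{\circ}$, hence in $X$.'' Zariski-openness is \emph{not} transitive for rigid spaces --- a closed analytic subset of a Zariski-open subspace need not extend to one of $X$ --- and the paper pointedly emphasizes this failure in the remark immediately following this very proposition, so an appeal to transitivity here is an appeal to a false general principle. Fortunately your configuration repairs itself in two lines. First, $X_{\alpha}^{\circ}=X\smallsetminus\bigcup_{\beta\neq\alpha}X_{\beta}$ is honestly Zariski-open in $X$, since a locally finite union of closed analytic subsets is closed analytic. Second, $V_{\alpha}=Z_{i(\alpha)}\cap X_{\alpha}^{\circ}$ is Zariski locally closed \emph{in $X$} (the strata $Z_{i}$ are by definition locally closed for the Zariski topology of $X$), so write $V_{\alpha}=W\cap C\cap X_{\alpha}^{\circ}$ with $W$ Zariski-open and $C$ Zariski-closed in $X$; density of $V_{\alpha}$ in $X_{\alpha}^{\circ}$ forces $X_{\alpha}^{\circ}\subseteq C$, whence $V_{\alpha}=W\cap X_{\alpha}^{\circ}$ is an intersection of two Zariski-opens of $X$. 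A similar touch-up is needed for ``a disjoint union of Zariski-opens is Zariski-open,'' which is not formal for infinite unions: here it holds because $X\smallsetminus U=\bigcup_{\alpha}\left(X_{\alpha}\smallsetminus V_{\alpha}\right)$ is a locally finite union of closed analytic subsets, local finiteness being inherited from the family of irreducible components. With these patches your proof is complete and correct.
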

Note that one probably cannot weaken the hypotheses in ii. here to
the condition that $j^{\ast}\mathscr{F}$ is Zariski-constructible;
this is related to the fact that the Zariski topology in the rigid
analytic world is not transitive.
\begin{prop}
\label{finitepush}If $f:X'\to X$ is a finite morphism and $\mathscr{F}$
is a Zariski-constructible sheaf on $X'$, then $f_{\ast}\mathscr{F}$
is Zariski-constructible.
\end{prop}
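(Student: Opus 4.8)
The plan is to argue by Noetherian induction on $\dim X$, using the dévissage proposition above together with the good behaviour of pushforward along finite maps. Since $f$ is finite it is proper with finite fibers, so (by Huber's results on finite morphisms) $f_*$ is exact and commutes with arbitrary base change; in particular, for any locally closed $W\subset X$ with preimage $W'=f^{-1}(W)$ and base-changed map $f_W:W'\to W$ one has $(f_*\mathscr{F})|_{W_{\mathrm{\acute{e}t}}}\cong f_{W*}(\mathscr{F}|_{W'_{\mathrm{\acute{e}t}}})$. First I would reduce to the case where $f$ is \emph{surjective}: factoring $f$ as the finite surjection $X'\to f(X')$ onto its (reduced) image followed by the closed immersion $f(X')\hookrightarrow X$, and invoking part ii of the first proposition above (pushforward along a closed immersion preserves Zariski-constructibility), it suffices to treat $X'\to f(X')$. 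So I may assume $f$ is finite surjective and $X,X'$ reduced.

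By the dévissage proposition it is now enough to produce a dense Zariski-open $j:U\hookrightarrow X$ with nowhere-dense closed complement $i:Z\hookrightarrow X$ such that $j^*(f_*\mathscr{F})$ is locally constant of finite type and $i^*(f_*\mathscr{F})$ is Zariski-constructible. For the latter the base-change identity gives $i^*(f_*\mathscr{F})\cong f_{Z*}(\mathscr{F}|_{f^{-1}(Z)_{\mathrm{\acute{e}t}}})$, where $\mathscr{F}|_{f^{-1}(Z)}$ is Zariski-constructible (part i of the first proposition) and $f_Z:f^{-1}(Z)\to Z$ is finite; since $\dim Z<\dim X$, the inductive hypothesis applies and shows $i^*(f_*\mathscr{F})$ is Zariski-constructible. (The base case $\dim X=0$ is immediate, since then every sheaf in sight is locally constant.) Thus everything comes down to the claim that $f_*\mathscr{F}$ is locally constant of finite type over some dense Zariski-open $U\subset X$.

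To prove this claim I would shrink $X$ repeatedly to a dense Zariski-open, using that a finite intersection of dense Zariski-opens is dense Zariski-open and that finite morphisms are closed with the image of a nowhere-dense closed subset again nowhere-dense (invariance of dimension under finite surjections). Since the claim may be checked on each irreducible component of $X$ separately and the normal locus of $X$ is dense Zariski-open (affinoids being excellent), I may assume $X$ normal and irreducible; shrinking further, I may assume every connected component of $X'=f^{-1}(X)$ dominates $X$ and is normal and irreducible, and—since the locus where $\mathscr{F}$ fails to be locally constant is nowhere dense in $X'$ with nowhere-dense image—that $\mathscr{F}|_{X'}$ is locally constant of finite type. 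Over the generic point the finite extension of function fields factors as a separable part followed by a purely inseparable part; spreading this out, I may factor $f$ componentwise as $X'\xrightarrow{\,g\,}W\xrightarrow{\,h\,}X$ with $h$ finite étale and $g$ a finite radicial surjection, i.e.\ a universal homeomorphism. As a universal homeomorphism induces an equivalence of étale topoi, $g_*(\mathscr{F}|_{X'})$ is locally constant of finite type on $W$, and then $h_*$ of a locally constant sheaf is locally constant of finite type because $h$ is finite étale; hence $f_*\mathscr{F}\cong h_*g_*(\mathscr{F}|_{X'})$ is locally constant of finite type, proving the claim.

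The main obstacle is this last step—controlling $f_*$ over the generic stratum—and in particular the treatment of inseparability: in positive characteristic a finite surjection need not be generically étale (think of a relative Frobenius), so one genuinely needs the factorization through a universal homeomorphism together with the topological invariance of the étale topos in the rigid-analytic setting. In characteristic zero this complication evaporates, since there $f$ is automatically finite étale over a dense Zariski-open and one pushes forward directly. A secondary point requiring care is the verification that each \emph{shrink to a dense Zariski-open} is legitimate—that the non-normal locus of $X$, the non-locally-constant locus of $\mathscr{F}$, the branch locus of $f$, and the images of the non-dominant components of $X'$ are all nowhere-dense closed analytic subsets—so that the induction on $\dim X$ is genuinely well-founded.
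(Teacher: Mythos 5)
Your proposal is correct and follows essentially the same route as the paper's proof: reduce to a finite surjection of reduced spaces, induct on $\dim X$ via the d\'evissage proposition using finite base change to handle the closed stratum, and over a dense Zariski-open treat the generic stratum by making $f$ finite \'etale in characteristic zero, or a universal homeomorphism followed by a finite \'etale map in characteristic $p$, spreading out from the generic points of $\mathrm{Spec}\,A'\to\mathrm{Spec}\,A$ and analytifying. Your extra care about the legitimacy of each shrink and the explicit use of topological invariance of the \'etale topos only makes explicit what the paper leaves implicit.
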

We note in passing that if $f:X\to Y$ is a finite morphism, or more
generally any quasi-compact separated morphism with finite fibers,
then $f_{\ast}:\mathrm{Sh}(X_{\mathrm{\acute{e}t}},\Lambda)\to\mathrm{Sh}(Y_{\mathrm{\acute{e}t}},\Lambda)$
is an exact functor, cf. Proposition 2.6.4 and Lemma 1.5.2 in \cite{Hub96}.
\begin{proof}
We treat the case where $X'=\mathrm{Spa}\, A'$ and $X=\mathrm{Spa}\, A$
are affinoid, which is all we'll need later. We can assume they are
reduced and that $f$ is surjective. If $i:Z\to X$ is Zariski-closed
and nowhere dense, then $\mathrm{dim}Z<\mathrm{dim}X$; setting $Z'=Z\times_{X}X'$
and writing $f':Z'\to Z$ and $i':Z'\to X'$ for the evident morphisms,
we can assume that $i^{\ast}f_{\ast}\mathscr{F}\cong f'_{\ast}i'^{\ast}\mathscr{F}$
is Zariski-constructible by induction on $\mathrm{dim}X$. By dévissage,
it now suffices to find a dense Zariski-open subset $j:U\to X$ such
that $j^{\ast}f_{\ast}\mathscr{F}$ is locally constant. To do this,
choose a dense Zariski-open subset $V\subset X'$ such that $\mathscr{F}|_{V}$
is locally constant. Then $W=X\smallsetminus f(X'\smallsetminus V)$
is a dense Zariski-open subset of $X$, and $\mathscr{F}$ is locally
constant after pullback along the open immersion $W'=W\times_{X}X'\to X'$.
If $\mathrm{char}(K)=0$, we now conclude by taking $U$ to be any
dense Zariski-open subset contained in $W$ such that $U'=U\times_{X}X'\to U$
is finite étale; if $\mathrm{char}(K)=p$, we instead choose $U$
so that $U'\to U$ factors as the composition of a universal homeomorphism
followed by a finite étale map. (For the existence of such a $U$,
look at the map of schemes $\mathrm{Spec}\, A'\to\mathrm{Spec}\, A$;
this morphism has the desired structure over all generic points of
the target, and these structures then spread out over a dense Zariski-open
subset of $\mathrm{Spec}\, A$. One then concludes by analytifying.)
\end{proof}
Next we check the two-out-of-three property:
\begin{prop}
\label{twooutofthree}Let $X$ is a rigid space and let $0\to\mathscr{F}\to\mathscr{G}\to\mathscr{H}\to0$
be a short exact sequence of étale sheaves of $\Lambda$-modules on
$X$. If two of the three sheaves $\{\mathscr{F},\mathscr{G},\mathscr{H}\}$
are Zariski-constructible, then so is the third.
\end{prop}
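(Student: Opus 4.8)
The plan is to argue by induction on $d=\dim X$, using the dévissage criterion established above together with the stability of Zariski-constructibility under pullback. Write the short exact sequence as $0\to\mathscr{F}\to\mathscr{G}\to\mathscr{H}\to0$, let $\mathscr{A}$ denote whichever of the three sheaves we wish to prove Zariski-constructible, and let $\mathscr{B},\mathscr{C}$ be the other two, which are Zariski-constructible by hypothesis. The whole argument is uniform in the position of $\mathscr{A}$; the only place the position matters is an elementary local lemma, which I would isolate first.

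The crux is the following local statement: on any rigid space, the locally constant sheaves of finite type satisfy two-out-of-three in short exact sequences of sheaves of $\Lambda$-modules. To see this I would work on connected components and use the equivalence between locally constant sheaves of finite type and finitely generated $\Lambda$-modules equipped with a continuous action of the étale fundamental group. Under this dictionary a short exact sequence of sheaves becomes a short exact sequence of $\pi_{1}$-modules; since $\Lambda$ is Noetherian, the class of finitely generated modules is closed under submodules, quotients, and extensions, and continuity of the action is inherited in each case. The only delicate point is local constancy of the unknown term: after trivializing $\mathscr{B}$ and $\mathscr{C}$ on a common finite étale cover, the kernel and cokernel of a morphism of constant sheaves are again constant componentwise, while in the extension case the relevant extension class lives in an $H^{1}$ with finite-type coefficients and becomes split after pulling back along a further torsor cover. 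This is the step I expect to require the most care.

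With the local lemma in hand, the induction runs as follows. Since $\mathscr{B}$ and $\mathscr{C}$ are Zariski-constructible, the dévissage criterion provides dense Zariski-open subsets on which each becomes locally constant of finite type; intersecting them yields a single dense Zariski-open $j:U\to X$, whose closed complement $i:Z\to X$ is a union of two nowhere-dense closed subsets and is therefore itself nowhere dense, so that $\dim Z<d$, and on which both $j^{\ast}\mathscr{B}$ and $j^{\ast}\mathscr{C}$ are locally constant of finite type. Because $j^{\ast}$ is exact, restricting the sequence to $U$ and applying the local lemma shows that $j^{\ast}\mathscr{A}$ is locally constant of finite type. Likewise $i^{\ast}$ is exact and preserves Zariski-constructibility, so restricting to $Z$ gives a short exact sequence two of whose terms are Zariski-constructible; by the inductive hypothesis (valid since $\dim Z<d$) the third term $i^{\ast}\mathscr{A}$ is Zariski-constructible as well.

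Finally, the dévissage criterion applied to $\mathscr{A}$ itself---now that $j^{\ast}\mathscr{A}$ is locally constant of finite type and $i^{\ast}\mathscr{A}$ is Zariski-constructible---shows that $\mathscr{A}$ is Zariski-constructible, completing the induction. The base case $d=0$ is immediate: there every nowhere-dense closed subset is empty, so Zariski-constructibility coincides with being locally constant of finite type, and the assertion is exactly the local lemma. As indicated, the genuine work is concentrated in that local lemma, and in particular in verifying that an extension of locally constant finite-type sheaves is again locally constant of finite type; everything else is a formal combination of exactness of pullback, the dévissage criterion, and the drop in dimension along a nowhere-dense closed subset.
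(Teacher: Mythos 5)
Your proposal is correct in outline and shares the paper's global skeleton exactly---induction on $\mathrm{dim}\,X$ plus the d\'evissage criterion, reducing everything to a two-out-of-three statement for locally constant sheaves of finite type on a dense Zariski-open subset---but at that key local step you and the paper genuinely diverge. The paper does not prove the local lemma by hand: after restricting to $U$, it invokes Huber's Lemma 2.7.3 to conclude that all three restricted sheaves are constructible in Huber's adic sense, then Huber's Lemma 2.7.11 to reduce local constancy to overconvergence, which it verifies by a snake-lemma argument on the stalk maps attached to specializations $\overline{x}\rightsquigarrow\overline{y}$ of geometric points (overconvergence is visibly a two-out-of-three property of such maps, so the delicate extension problem is entirely outsourced to Huber's machinery). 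Your route is more elementary and more portable---it works on any locally connected site and uses nothing specific to adic spaces---at the cost of having to prove the extension case directly, which you rightly identify as the crux.

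Two repairs are needed in your local lemma as written. First, drop the appeal to the equivalence with continuous $\pi_{1}$-representations and the phrase ``common finite \'etale cover'': since $\Lambda$ is merely Noetherian, stalks may be infinite (e.g.\ $\Lambda=\mathbf{Z}$), a locally constant sheaf of finite type need not be trivialized by any \emph{finite} \'etale cover, and the monodromy dictionary is not available as stated; but none of this is needed, as the kernel/cokernel argument runs on an arbitrary common \'etale trivializing cover, where a morphism of constant sheaves over a connected base is induced by a single module map and Noetherianity of $\Lambda$ keeps kernels and cokernels finitely generated. Second, in the extension case the clean mechanism is: with $\mathscr{B}\cong\underline{M}$ and $\mathscr{C}\cong\underline{N}$ constant and $N$ finitely presented, the sheaves $\mathscr{H}\mathrm{om}(\underline{N},\underline{M})$ and $\mathscr{E}\mathrm{xt}^{1}(\underline{N},\underline{M})$ are the constant sheaves on $\mathrm{Hom}_{\Lambda}(N,M)$ and $\mathrm{Ext}_{\Lambda}^{1}(N,M)$, so by the low-degree exact sequence the class of $\mathscr{A}$ differs, locally on connected pieces, from a constant extension by a class in $H^{1}(-,\underline{\mathrm{Hom}_{\Lambda}(N,M)})$, which vanishes on a further \'etale cover because $\check{H}^{1}=H^{1}$ on any site---no representable torsor is needed (and a torsor under an infinite group need not be representable by a rigid space in any case). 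With these patches your argument is complete and constitutes a valid, self-contained alternative to the paper's use of Huber's constructibility and overconvergence lemmas.
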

Using this result, it's straightforward to check that Zariski-constructible
sheaves form an abelian subcategory of $\mathrm{Sh}(X_{\mathrm{\acute{e}t}},\Lambda)$;
since we never need this result, we leave it as an exercise for the
interested reader.
\begin{proof}
By induction on $\mathrm{dim}X$ and dévissage, it suffices to find
some dense Zariski-open subset $j:U\to X$ such that all three sheaves
are locally constant after restriction to $U$. By assumption, we
can choose $U$ such that two of the three sheaves have this property.
Looking at the exact sequence $0\to\mathscr{F}|_{U}\to\mathscr{G}|_{U}\to\mathscr{H}|_{U}\to0$,
\cite[Lemma 2.7.3]{Hub96} implies that all three sheaves are constructible
(in the sense of \cite{Hub96}). By \cite[Lemma 2.7.11]{Hub96}, it
now suffices to check that all three sheaves are overconvergent. But
if $\overline{x}\rightsquigarrow\overline{y}$ is any specialization
of geometric points, this follows immediately by applying the snake
lemma to the diagram\[
\xymatrix{0\ar[r] & \mathscr{F}_{\overline{y}}\ar[r]\ar[d] & \mathscr{G}_{\overline{y}}\ar[r]\ar[d] & \mathscr{H}_{\overline{y}}\ar[r]\ar[d] & 0\\
0\ar[r] & \mathscr{F}_{\overline{x}}\ar[r] & \mathscr{G}_{\overline{x}}\ar[r] & \mathscr{H}_{\overline{x}}\ar[r] & 0}
\]
since by assumption two of the three vertical arrows are isomorphisms.
\end{proof}
It seems interesting to develop some more theory around Zariski-constructible
sheaves on rigid spaces. In particular, we expect this notion to have
some non-trivial stabilities under certain of the six functors:
\begin{conjecture}
Let $X$ be a finite-dimensional rigid space over a complete algebraically
closed field $C$. For simplicity, assume that $\Lambda=\mathbf{Z}/n\mathbf{Z}$
for some $n$ prime to the residue characteristic, and suppose that
$X$ admits a dualizing complex $\omega_{X}\in D^{b}(X_{\mathrm{\acute{e}t}},\Lambda)$.%
\footnote{It seems plausible that $\omega_{X}$ can be defined for essentially
any $X$ by constructing it locally on an étale hypercover by disjoint
unions of affinoids and then applying some version of the {}``BBD
gluing lemma''; the only nonformal input one should need is that
$\mathrm{Ext}_{D^{b}(U_{\mathrm{\acute{e}t}},\Lambda)}^{i}(\omega_{U},\omega_{U})=0$
for any affinoid $U$ and any $i<0$, which is immediate from \cite[Proposition B.3.1]{Happ}.%
} Let $\mathscr{F}\in D^{b}(X_{\mathrm{\acute{e}t}},\Lambda)$ be any
Zariski-constructible object. Then:

\emph{i. }The Verdier dual $\mathbf{D}_{X}\mathscr{F}\overset{\mathrm{def}}{=}R\mathscr{H}\mathrm{om}(\mathscr{F},\omega_{X})$
is Zariski-constructible.

\emph{ii. }If $f:Y\to X$ is any morphism which is finite-dimensional
and compactifiable in the sense of \cite[Definition 5.1.1]{Hub96},
so $Rf_{!}$ and $Rf^{!}$ are defined, then $Rf^{!}\mathscr{F}$
is Zariski-constructible.

\emph{iii. }If $j:X\to W$ is a Zariski-open immersion and $\mathscr{F}$
is locally constant, then $Rj_{\ast}\mathscr{F}$ is Zariski-constructible.

\emph{iv. }If $f:X\to S$ is any proper morphism of rigid spaces,
then $Rf_{\ast}\mathscr{F}$ is Zariski-constructible. 
\end{conjecture}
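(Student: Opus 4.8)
The plan is to deduce all four statements from the elementary stabilities already recorded (pullback, finite pushforward in Proposition \ref{finitepush}, and the two-out-of-three property of Proposition \ref{twooutofthree}) together with the duality formalism, organizing everything around the dévissage criterion and Noetherian induction on $\dim X$. The recurring mechanism is this: to show that an object $\mathscr{G}\in D^{b}(X_{\mathrm{\acute{e}t}},\Lambda)$ is Zariski-constructible it suffices, by dévissage, to produce a dense Zariski-open $j:U\to X$ with closed complement $i:Z\to X$ such that $j^{\ast}\mathscr{G}$ has locally constant cohomology of finite type while $i^{\ast}\mathscr{G}$ is Zariski-constructible. The second condition is handled by the inductive hypothesis, once one knows the operation in question is compatible with restriction to $Z$ via the appropriate base-change theorem, so the genuine content is always \emph{generic local constancy} of the output on a dense Zariski-open.

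The logical backbone is Verdier duality, and I would first establish part iii as the fundamental local computation. For a Zariski-open immersion $j:X\to W$ with $\mathscr{F}$ locally constant, the stalk of $R^{q}j_{\ast}\mathscr{F}$ at a geometric point of $W$ is the cohomology of a punctured tubular neighborhood; after using Temkin's embedded resolution to reduce to the case where $W\smallsetminus X$ is a strict normal crossings divisor and $\mathscr{F}$ is tame (automatic since $n$ is prime to the residue characteristic), these stalks are governed by the local monodromy and computed by the usual logarithmic Koszul formula, hence are locally constant along the strata of the divisor. This is exactly the circle of ideas underlying Theorem \ref{extendingcovers}. With part iii in hand I would prove part i (Zariski-constructibility of $\mathbf{D}_{X}\mathscr{F}$) together with biduality by dévissage: on a smooth dense Zariski-open $U$ of dimension $d$ one has $\omega_{U}\cong\Lambda(d)[2d]$ by Berkovich's Poincaré duality, so $\mathbf{D}_{U}$ sends a finite-type local system to its shifted, twisted dual, which is again locally constant; the interaction with $Z$ is controlled by the localization triangle $i_{\ast}Ri^{!}\to\mathrm{id}\to Rj_{\ast}j^{\ast}$, whose Verdier dual is $j_{!}j^{\ast}\to\mathrm{id}\to i_{\ast}i^{\ast}$, and combining this with part iii, Proposition \ref{twooutofthree}, and induction yields the claim.

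Parts ii and iv then follow formally from duality. For part ii I would use the isomorphism $Rf^{!}\cong\mathbf{D}_{Y}\circ f^{\ast}\circ\mathbf{D}_{X}$, with $\omega_{Y}:=Rf^{!}\omega_{X}$ (which one checks is Zariski-constructible): since $f^{\ast}$ preserves Zariski-constructibility and $\mathbf{D}$ does by part i, so does $Rf^{!}$. For part iv, where $f:X\to S$ is proper, one has $Rf_{\ast}=Rf_{!}$, and Huber's proper base change identifies the stalk of $R^{q}f_{\ast}\mathscr{F}$ at a geometric point $\overline{s}$ with $H^{q}(X_{\overline{s}},\mathscr{F}|_{X_{\overline{s}}})$; stratifying $S$ so that $f$ becomes, over each stratum, sufficiently trivial (a rigid analogue of a generic fibration), these stalks are locally constant along the strata, and one concludes once more by dévissage. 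Alternatively, part iv can be packaged as $Rf_{\ast}=\mathbf{D}_{S}\circ Rf_{!}\circ\mathbf{D}_{X}$ once $Rf_{!}$ is known to preserve Zariski-constructibility.

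I expect the main obstacle to be precisely the generic local-constancy inputs in parts iii and iv: these amount to a rigid analytic \emph{generic local acyclicity} theorem, asserting that the relevant nearby-cycle and proper-pushforward stalks are locally constant not merely in Huber's constructible sense but for the rigid \emph{Zariski} topology. Because the Zariski topology on rigid spaces is not transitive, passing from local constancy on an étale or overconvergent stratification to one by Zariski-locally-closed strata is genuinely delicate, and it is here that Temkin's resolution and the cover-extension result of Theorem \ref{extendingcovers} must be used to manufacture stratifications along which the output sheaves are provably Zariski-locally constant.
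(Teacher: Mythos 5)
You should first be aware that the paper contains no proof of this statement: it is stated as a conjecture, and the author explicitly remarks only that the implications $\mathrm{iii.}\Leftrightarrow\mathrm{i.}\Rightarrow\mathrm{ii.}$ hold and that iv.\ is true when $\mathrm{dim}\,S=1$, with even those proofs omitted and deferred to future work. So there is nothing in the paper to compare your argument against, and the question is whether your proposal actually closes the conjecture. It does not. Your reduction scheme is sound and in fact mirrors the author's stated logical structure: d\'evissage plus induction on dimension shows that everything hinges on generic statements; the triangle $Ri^{!}\mathscr{F}\to i^{\ast}\mathscr{F}\to i^{\ast}Rj_{\ast}j^{\ast}\mathscr{F}$ together with $i^{\ast}\mathbf{D}_{X}\mathscr{F}\cong\mathbf{D}_{Z}Ri^{!}\mathscr{F}$ lets part iii drive part i, and duality formally transports i to ii (modulo biduality, which you use silently in writing $Rf^{!}\cong\mathbf{D}_{Y}\circ f^{\ast}\circ\mathbf{D}_{X}$ and which is itself not established for Zariski-constructible objects in this setting).

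The genuine gap is exactly where you place it, but flagging it does not fill it: parts iii and iv each require a rigid analytic generic local acyclicity input that does not exist in the literature and is the actual content of the conjecture. For iii, your appeal to a ``logarithmic Koszul formula'' for the stalks of $R^{q}j_{\ast}\mathscr{F}$ along a strict normal crossings boundary imports an algebraic computation (essentially SGA 4$\frac{1}{2}$-style tameness along an SNC divisor) into a setting where it has not been proved: the stalk of $Rj_{\ast}$ at a non-classical (higher-rank) point of $W\smallsetminus X$ is the cohomology of a punctured adic neighborhood, not of an algebraic henselian trait, and no monodromy-theoretic description of it is available. Theorem \ref{extendingcovers} concerns extending \emph{finite \'etale covers}, i.e.\ controls $\pi_{1}$ of the complement; it says nothing about higher direct images, so it cannot substitute for the missing acyclicity statement. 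For iv, ``stratifying $S$ so that $f$ becomes sufficiently trivial over each stratum'' is precisely the rigid analogue of Deligne's generic fibration/Th.\ finitude argument, and no such theorem is known for rigid spaces; proper base change gives you the fibral description of the stalks of $R^{q}f_{\ast}\mathscr{F}$, but local constancy of these along Zariski strata of $S$ is the open problem, not a consequence of it. In short, your proposal is a correct organization of the formal implications the author already asserts, wrapped around two unproven theorems; as a proof of the conjecture it is circular at its core.
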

It is true, but not obvious, that we have implications $\mathrm{iii.\Leftrightarrow i.\Rightarrow ii.}$
among these statements, and that $\mathrm{iv}.$ is true when $\mathrm{dim}S=1$;
we omit the proofs, since these results aren't needed in the present
paper. We hope to return to this conjecture in a future article.

\subsection{Extending covers across closed subsets}

In this section we prove a slight strengthening of Theorem \ref{extendingcovers}.
We'll freely use basic facts about irreducible components of rigid
spaces, as developed in \cite{ConradIrred}, without any comment.
The following result of Bartenwerfer \cite[\S3]{Bar} is also crucial
for our purposes.
\begin{thm}[Bartenwerfer]
\label{hebbarkeitsatz}Let $X$ be a normal rigid space, and let
$Z\subset X$ be a nowhere-dense closed analytic subset, with $j:X\smallsetminus Z\to X$
the inclusion of the open complement. Then $\mathcal{O}_{X}^{+}\overset{\sim}{\to}j_{\ast}\mathcal{O}_{X\smallsetminus Z}^{+}$
and $\mathcal{O}_{X}\overset{\sim}{\to}\left(j_{\ast}\mathcal{O}_{X\smallsetminus Z}^{+}\right)[\tfrac{1}{\varpi}]$.
In particular, if $X$ is affinoid and $f\in\mathcal{O}_{X}(X\smallsetminus Z)$
is bounded, then $f$ extends uniquely to an element of $\mathcal{O}_{X}(X)$,
so $\mathcal{O}_{X}(X)\cong\mathcal{O}_{X}^{+}(X\smallsetminus Z)[\tfrac{1}{\varpi}]$.\end{thm}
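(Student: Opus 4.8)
The plan is to separate uniqueness from existence and to reduce the existence statement to an explicit extension problem for scalar functions on a polydisk. Uniqueness, i.e.\ injectivity of $\mathcal{O}_X^+\to j_*\mathcal{O}_{X\smallsetminus Z}^+$, is immediate: a normal rigid space is reduced and $Z$ is nowhere-dense, so $U:=X\smallsetminus Z$ is dense and a section killed by restriction to $U$ vanishes. The second isomorphism is then formal: for a reduced affinoid one has $\mathcal{O}_X=\mathcal{O}_X^+[\tfrac1\varpi]$, so inverting $\varpi$ in the first isomorphism produces $\mathcal{O}_X\xrightarrow{\sim}(j_*\mathcal{O}_{U}^+)[\tfrac1\varpi]$. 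Hence the entire content is the surjectivity of $\mathcal{O}_X^+\to j_*\mathcal{O}_U^+$: every function on $U$ bounded by $1$ must extend (necessarily uniquely, by the above) to a power-bounded function on $X$.

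Since the assertion concerns a map of sheaves being an isomorphism, I would check it locally and so take $X=\mathrm{Spa}\,A$ with $A$ a normal affinoid; passing to connected components (each irreducible, as $X$ is normal) I may assume $A$ is an integrally closed domain of dimension $d$, with $\dim Z<d$. By Noether normalization choose a finite surjection $\pi:X\to P:=\mathbb{B}^d=\mathrm{Spa}\,T_d$, $T_d=K\langle t_1,\dots,t_d\rangle$, exhibiting $A$ as a finite torsion-free $T_d$-module. Then $W:=\pi(Z)$ and $\pi^{-1}(W)\supseteq Z$ are nowhere-dense, and because $\pi$ is finite, base change gives $\mathcal{O}_X\bigl(\pi^{-1}(P\smallsetminus W)\bigr)\cong A\otimes_{T_d}\mathcal{O}_P(P\smallsetminus W)$. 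Enlarging $W$ by a nowhere-dense set I may assume $A$ is free over $T_d$ away from $W$, with a power-bounded basis $e_1,\dots,e_m$; restricting the given $f$ to the smaller open $\pi^{-1}(P\smallsetminus W)\subseteq U$ I can write $f=\sum_i f_i e_i$ with honest scalar functions $f_i\in\mathcal{O}_P(P\smallsetminus W)$. If I can extend each $f_i$ across $W$ to a bounded function on $P$, then $\sum_i f_i e_i$ defines an element of $A$, power-bounded because it is bounded, giving the desired extension of $f$ (which agrees with the original on $U$ by uniqueness).

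It remains to treat two linked analytic points, which together form the crux. First, the scalar base case: a bounded function on $P\smallsetminus W$, with $W$ nowhere-dense in the polydisk, extends across $W$; I would prove this by induction on $d$, using Weierstrass preparation to enclose $W$ in a hypersurface finite over a coordinate polydisk and reducing to the one-variable statement, where boundedness of a function on a punctured disk kills the principal part of its Laurent expansion by the maximum-modulus principle. Second, and this is the \emph{main obstacle}, one must show that the coordinate functions $f_i$ are themselves bounded near $W$: the identity $|f|=\max_i|f_i|$ holds only where $\{e_i\}$ is an orthonormal basis, and the comparison constant relating $|f|$ to $\max_i|f_i|$ can degenerate as one approaches $W$, so deducing uniform boundedness of each $f_i$ from boundedness of $f$ requires genuine control of the integral structure $A^\circ$ relative to $T_d^\circ$ near the (possibly singular, lower-dimensional) strata of $Z$. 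Converting the purely metric hypothesis $|f|\le 1$ into this uniform algebraic control near the boundary is exactly where normality of $A$ and careful maximum-modulus estimates must be combined.
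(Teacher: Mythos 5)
This statement is not proved in the paper at all: it is quoted verbatim as a theorem of Bartenwerfer, with a citation to \cite[\S 3]{Bar}, so there is no in-paper argument to compare against. Judged on its own merits, your proposal correctly dispatches the easy parts (injectivity from reducedness and density of $X\smallsetminus Z$; the second isomorphism from the first by inverting $\varpi$; localization to a normal affinoid domain; Noether normalization $\pi:X\to\mathbf{B}^{d}$ and the decomposition $f=\sum_{i}f_{i}e_{i}$ over a free locus), but it is not a proof: it openly defers the decisive step, namely that boundedness of $f$ forces boundedness of the coefficient functions $f_{i}$ near $W$, and that step is not a routine estimate but is essentially the whole content of Bartenwerfer's theorem. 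Concretely, the implication you need is \emph{false} without normality: take $A=K\langle x,y\rangle/(y^{2}-x^{3})$, finite over $T_{1}=K\langle x\rangle$ with basis $1,y$, and $f=y/x$ on the complement of the cusp. Then $|f|\leq1$ there, yet the coefficient $f_{1}=1/x$ is unbounded, and indeed $f$ does not extend. So the passage from the metric bound $|f|\leq1$ to control of the $f_{i}$ must invoke normality of $A$ in an essential, quantitative way (via trace/dual-basis expressions the $f_{i}$ acquire denominators along the discriminant of $\pi$, and one must show these apparent poles are spurious for normal $A$); your sketch names this obstacle accurately but supplies no mechanism to overcome it.

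A secondary, smaller gap: your scalar base case --- a bounded function on $\mathbf{B}^{d}\smallsetminus W$ extends across a nowhere-dense analytic $W$ --- is itself the smooth instance of the Hebbarkeitssatz. The one-variable Laurent argument is fine, but the induction via Weierstrass preparation requires showing the fiberwise extensions vary analytically in the base coordinates (e.g.\ by expanding $f$ relative to the finite covering and estimating the coefficient series uniformly), which you assert rather than carry out. In sum: the reduction scheme is sensible and the structure of the difficulty is correctly diagnosed, but both remaining analytic steps --- above all the boundedness of the $f_{i}$ --- are exactly where Bartenwerfer's delicate estimates live, so the proposal amounts to a reduction of the theorem to itself rather than a proof; the paper, for its part, simply cites \cite{Bar}.
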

\begin{cor}
If $X$ is a connected normal rigid space and $Z\subset X$ is a nowhere-dense
closed analytic subset, then $X\smallsetminus Z$ is connected.\end{cor}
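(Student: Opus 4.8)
The plan is to detect connectedness of $X\smallsetminus Z$ through idempotents in its ring of global functions, and to transport a hypothetical nontrivial idempotent up to $X$ by means of Bartenwerfer's theorem. Since $X$ is normal it is in particular reduced, and since $Z$ is nowhere-dense the open subset $X\smallsetminus Z$ is dense, hence nonempty. I would first reduce the statement to showing that $\mathcal{O}_{X}(X\smallsetminus Z)$ has no idempotents other than $0$ and $1$: a decomposition $X\smallsetminus Z=U_{1}\sqcup U_{2}$ into two nonempty disjoint admissible opens produces the global section $e$ which equals $1$ on $U_{1}$ and $0$ on $U_{2}$, and this is a well-defined element of $\mathcal{O}_{X}(X\smallsetminus Z)$ precisely because the covering is admissible; it is a nontrivial idempotent, and conversely any nontrivial idempotent cuts $X\smallsetminus Z$ into two such clopen pieces.

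The key point is that every idempotent is automatically power-bounded. Indeed, at each point $x$ the residue value of $e$ satisfies $e(x)^{2}=e(x)$ and therefore lies in $\{0,1\}$, so $|e|_{\sup}\leq1$ and $e\in\mathcal{O}_{X}^{+}(X\smallsetminus Z)$. Now Bartenwerfer's theorem (Theorem \ref{hebbarkeitsatz}) furnishes an isomorphism of sheaves of rings $\mathcal{O}_{X}^{+}\overset{\sim}{\to}j_{\ast}\mathcal{O}_{X\smallsetminus Z}^{+}$; taking sections over $X$ gives a ring isomorphism $\mathcal{O}_{X}^{+}(X)\cong\mathcal{O}_{X\smallsetminus Z}^{+}(X\smallsetminus Z)$, realized by restriction along $j$. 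Hence $e$ is the restriction of a unique $\tilde{e}\in\mathcal{O}_{X}^{+}(X)\subseteq\mathcal{O}_{X}(X)$, and $\tilde{e}$ is again idempotent because the identification is a ring homomorphism. As $X$ is connected, $\mathcal{O}_{X}(X)$ contains no nontrivial idempotents, so $\tilde{e}\in\{0,1\}$; restricting back along $j$ gives $e=\tilde{e}|_{X\smallsetminus Z}\in\{0,1\}$, contradicting the nontriviality of $e$.

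The only place demanding genuine care is the equivalence between geometric connectedness of a rigid space and the absence of nontrivial idempotents in its global functions, together with the verification that the disconnecting characteristic function really is a single global section over $X\smallsetminus Z$ rather than merely a compatible system on a covering. Both rest on the admissibility of the disconnecting cover and on $X$ being reduced; granting these, the corollary is a formal consequence of the ring isomorphism supplied by Bartenwerfer's Hebbarkeitssatz. I expect no serious obstacle beyond this bookkeeping, since the essential analytic content -- that bounded functions extend across $Z$ -- is exactly what Theorem \ref{hebbarkeitsatz} provides.
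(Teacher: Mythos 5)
Your argument is correct and is exactly the paper's proof, which consists of the single observation that any idempotent in $\mathcal{O}_{X}(X\smallsetminus Z)$ is power-bounded, whence the claim is immediate from Theorem \ref{hebbarkeitsatz}. You have simply spelled out the bookkeeping (admissibility of the disconnecting cover, transport of the idempotent, uniqueness of the extension) that the paper leaves implicit.
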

\begin{proof}
Any idempotent in $\mathcal{O}_{X}(X\smallsetminus Z)$ is power-bounded,
so this is immediate from the previous theorem.\end{proof}
\begin{prop}
Let $X$ be a normal rigid space, and let $\pi:Y\to X$ be a cover
of $X$. Then each irreducible component of $Y$ maps surjectively
onto some irreducible component of $X$. Moreover, if $V\subset X$
is any closed nowhere-dense analytic subset, then $\pi^{-1}(V)$ is
nowhere-dense.\end{prop}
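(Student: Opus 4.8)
The plan is to reduce everything to the finite étale locus and to exploit the connectedness corollary proved just above. First I would record the structural facts about normal rigid spaces from \cite{ConradIrred} that the section is already relying on: the irreducible components of a normal rigid space are pairwise disjoint, each one open and closed, and hence they coincide with its connected components; moreover a closed analytic subset is nowhere-dense exactly when it contains no irreducible component. I would also note the routine bookkeeping that, since components are open and closed, the intersection of a nowhere-dense closed analytic subset with a component is again nowhere-dense in that component.

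Now fix an irreducible component $Y_i\subseteq Y$; it is open, closed, connected, and normal. Write $U=X\smallsetminus Z$ and $\widetilde{U}=\pi^{-1}(U)=Y\smallsetminus\pi^{-1}(Z)$, so that $\pi|_{\widetilde U}:\widetilde U\to U$ is finite étale. Because $Y_i$ is a nonempty open set and $\pi^{-1}(Z)$ is nowhere-dense, $Y_i\cap\widetilde U$ is a nonempty dense open subset of $Y_i$, and applying the preceding corollary to the normal connected space $Y_i$ and the nowhere-dense set $\pi^{-1}(Z)\cap Y_i$ shows $Y_i\cap\widetilde U$ is connected. Likewise, writing $X_j$ for the irreducible components of $X$, each $X_j\cap U$ is open, closed, and (again by the corollary) connected in $U$, so the $X_j\cap U$ are precisely the connected components of $U$.

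Next I would pin down the image. The set $Y_i\cap\widetilde U$ is open and closed in $\widetilde U$, so its image under $\pi|_{\widetilde U}$ — which is open since the map is étale and closed since it is finite — is a nonempty open-closed connected subset of $U$, forcing $\pi(Y_i\cap\widetilde U)=X_j\cap U$ for a unique $j$. Since $\pi$ is finite it is a closed map, and $Y_i=\overline{Y_i\cap\widetilde U}$; hence $\pi(Y_i)$ is closed, contains the dense subset $X_j\cap U$ of $X_j$, and satisfies $\pi(Y_i)\subseteq\overline{\pi(Y_i\cap\widetilde U)}=\overline{X_j\cap U}=X_j$. Therefore $\pi(Y_i)=X_j$, which is the first assertion. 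The second assertion then follows formally: $\pi^{-1}(V)$ is a closed analytic subset of $Y$, so were it not nowhere-dense it would contain some irreducible component $Y_i$, giving $\pi(Y_i)\subseteq V$; but $\pi(Y_i)$ is an irreducible component of $X$ by what we just proved, contradicting that $V$ contains no irreducible component of $X$.

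I expect no serious obstacle here: the argument is essentially formal once the connectedness corollary is available. The only points demanding care are the identification of irreducible components with connected components for normal spaces and the checking that nowhere-density is inherited by intersections with the (open-and-closed) components, both of $X$ under $Z$ and of $Y$ under $\pi^{-1}(Z)$. The one genuinely load-bearing step is the claim that the image of the open-closed connected piece $Y_i\cap\widetilde U$ must be an \emph{entire} connected component of $U$, which is exactly where I use both that $\pi|_{\widetilde U}$ is open (étale) and that it is closed (finite).
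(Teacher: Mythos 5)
Your proof is correct and takes essentially the same route as the paper's: the connectedness corollary applied to the finite \'etale locus, combined with the observation that the image of a component is open (\'etale) and closed (finite), hence a full connected component of the complement of $Z$, and then closedness of finite maps to conclude surjectivity onto a component. The only cosmetic difference is in the second assertion, where the paper uses the dimension count $\mathrm{dim}\,\pi^{-1}(V)\cap Y_{i}=\mathrm{dim}\,V<\mathrm{dim}\,X=\mathrm{dim}\,Y_{i}$ while you deduce it formally from the first assertion via the characterization of nowhere-density by containment of irreducible components; both are immediate given the facts from \cite{ConradIrred}.
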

\begin{proof}
We immediately reduce to the case where $X$ is connected. Let $Z\subset X$
be as in the definition of a cover, and let $Y_{i}$ be any connected
component of $Y$, so then $Y_{i}\cap\pi^{-1}(Z)$ is closed and nowhere-dense
in $Y_{i}$ and $Y_{i}\smallsetminus Y_{i}\cap\pi^{-1}(Z)\to X\smallsetminus Z$
is finite étale. Then \[
\mathrm{im}\left(Y_{i}\smallsetminus Y_{i}\cap\pi^{-1}(Z)\to X\smallsetminus Z\right)\]
is a nonempty open and closed subset of $X\smallsetminus Z$, so it
coincides with $X\smallsetminus Z$ by the previous corollary. In
particular, $\pi(Y_{i})$ contains a dense subset of $X$. On the
other hand, $\pi(Y_{i})$ is a closed analytic subset of $X$ since
$\pi$ is finite. Therefore $\pi(Y_{i})=X$. 

For the second claim, note that if $V$ is a closed analytic subset
of a connected normal space $X$, then $V\subsetneq X$ if and only
if $V$ is nowhere-dense if and only if $ $$\mathrm{dim}V<\mathrm{dim}X$.
Since \[
\mathrm{dim}\pi^{-1}(V)\cap Y_{i}=\mathrm{dim}V<\mathrm{dim}X=\mathrm{dim}Y_{i}\]
for any irreducible component $Y_{i}$ of $Y$, this gives the claim.\end{proof}
\begin{prop}
Let $X$ be a normal rigid space, and let $Z\subset X$ be any closed
nowhere-dense analytic subset. Then the restriction functor \begin{eqnarray*}
\left\{ \begin{array}{c}
\mathrm{covers\, of\,}X\end{array}\right\}  & \to & \left\{ \begin{array}{c}
\mathrm{covers\, of\,}X\smallsetminus Z\end{array}\right\} \\
Y & \mapsto & Y\times_{X}(X\smallsetminus Z)\end{eqnarray*}
is fully faithful.\end{prop}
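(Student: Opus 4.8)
The plan is to deduce both assertions from Bartenwerfer's theorem (Theorem \ref{hebbarkeitsatz}), reducing everything to a statement about extending bounded functions across $Z$. Since normality, nowhere-density, and the formation of morphisms between finite $X$-spaces are all local on $X$, I would first pass to an admissible affinoid cover and thereby assume $X=\mathrm{Spa}\,A$ is a connected (hence irreducible) normal affinoid; write $Z_i=\pi_i^{-1}(Z)$. Any cover $\pi_i\colon Y_i\to X$ is then finite over an affinoid, hence $Y_i=\mathrm{Spa}\,B_i$ with $B_i$ a normal ring, module-finite over $A$, and a morphism $Y_1\to Y_2$ over $X$ is the same datum as an $A$-algebra homomorphism $B_2\to B_1$. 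By the preceding proposition, $Z_i$ is nowhere-dense in $Y_i$, and $Y_i\times_X(X\smallsetminus Z)=Y_i\smallsetminus Z_i$; moreover Theorem \ref{hebbarkeitsatz} applied to the normal space $Y_i$ shows that restriction $B_i=\mathcal{O}(Y_i)\to\mathcal{O}(Y_i\smallsetminus Z_i)$ is injective.

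For faithfulness, suppose $f,g\colon Y_1\to Y_2$ are morphisms of covers whose restrictions to $Y_1\smallsetminus Z_1$ agree. Their comorphisms $f^\#,g^\#\colon B_2\to B_1$ then agree after composing with the injection $B_1\hookrightarrow\mathcal{O}(Y_1\smallsetminus Z_1)$, so $f^\#=g^\#$ and $f=g$.

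For fullness, let $h\colon Y_1\smallsetminus Z_1\to Y_2\smallsetminus Z_2$ be a morphism over $X\smallsetminus Z$, with comorphism $h^\#\colon\mathcal{O}(Y_2\smallsetminus Z_2)\to\mathcal{O}(Y_1\smallsetminus Z_1)$. Given $b\in B_2$, I would restrict $b$ to $Y_2\smallsetminus Z_2$ and form $h^\#(b)\in\mathcal{O}(Y_1\smallsetminus Z_1)$; the goal is to show $h^\#(b)\in B_1$. The point is that $b$ is integral over $A$, satisfying a monic relation $b^n+a_{n-1}b^{n-1}+\cdots+a_0=0$ with $a_j\in A$; applying the $\mathcal{O}(X\smallsetminus Z)$-algebra map $h^\#$ shows $h^\#(b)$ satisfies the same monic relation over $A$. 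Since each $a_j$ is a bounded function on $X$, a standard estimate bounds $|h^\#(b)(y)|$ uniformly in terms of $\max_j\|a_j\|_{\sup}$, so $h^\#(b)$ is a bounded function on $Y_1\smallsetminus Z_1$. Bartenwerfer's theorem, applied to the normal affinoid $Y_1$ and the nowhere-dense set $Z_1$, then extends it uniquely to an element $\Phi(b)\in\mathcal{O}(Y_1)=B_1$. Uniqueness of the extension forces $\Phi\colon B_2\to B_1$ to be an $A$-algebra homomorphism, and by construction its composite with $B_1\hookrightarrow\mathcal{O}(Y_1\smallsetminus Z_1)$ recovers the map $B_2\to\mathcal{O}(Y_1\smallsetminus Z_1)$ defining $h$, so the associated morphism $Y_1\to Y_2$ restricts to $h$. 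For general $X$ one glues the affinoid-local extensions, which agree on overlaps by the faithfulness just proved.

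The substance of the argument is entirely in the fullness step, and the crux is the two-part claim that $h^\#(b)$ is (i) integral over $A$ and (ii) therefore bounded, so that Bartenwerfer applies on $Y_1$. Part (i) is formal once one transports the integral equation through the ring map $h^\#$; part (ii) is the quantitative heart, turning integrality over the bounded-coefficient ring $A$ into a genuine sup-norm bound on $Y_1\smallsetminus Z_1$. The remaining verifications — that $\Phi$ is multiplicative and $A$-linear, and that the affinoid-local extensions glue — are all immediate consequences of the uniqueness clause in Theorem \ref{hebbarkeitsatz}.
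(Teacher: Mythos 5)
Your proof is correct and takes essentially the same route as the paper: both reduce to the affinoid case and use Bartenwerfer's Hebbarkeitssatz (Theorem \ref{hebbarkeitsatz}) to extend the comorphism uniquely across $Z$, with faithfulness coming from injectivity of restriction to the dense complement. The only difference is minor: the paper gets boundedness for free from the identification $\mathcal{O}_{Y}(\pi^{-1}(U))\cong\mathcal{O}_{Y}^{+}(\pi^{-1}(U\smallsetminus U\cap Z))[\tfrac{1}{\varpi}]$, since comorphisms of morphisms of rigid (adic) spaces automatically preserve $\mathcal{O}^{+}$, whereas you re-derive that boundedness by hand via integrality of $B_{2}$ over $A$ and the ultrametric root estimate --- a harmless, slightly more explicit variant of the same mechanism.
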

\begin{proof}
If $\pi:Y\to X$ is any cover and $U\subset X$ is any open affinoid,
then $\pi^{-1}(U)$ is affinoid as well, and $\pi^{-1}(Z\cap U)$
is nowhere-dense in $U$ by the previous proposition. But then $\mathcal{O}_{Y}(\pi^{-1}(U))\cong\mathcal{O}_{Y}^{+}(\pi^{-1}(U\smallsetminus U\cap Z))[\tfrac{1}{\varpi}]$
by Theorem \ref{hebbarkeitsatz}, so $\mathcal{O}_{Y}(U)$ only depends
on $Y\times_{X}(X\smallsetminus Z)$. This immediately gives the result.
\end{proof}
It remains to prove the following result.
\begin{thm}
\label{coversextend}Let $X$ be a normal rigid space over a characteristic
zero complete nonarchimedean field $K$, and let $Z\subset X$ be
any closed nowhere-dense analytic subset. Then the restriction functor\begin{eqnarray*}
\left\{ \begin{array}{c}
\mathrm{covers\, of\,}X\\
\mathrm{\acute{e}tale\, over\,}X\smallsetminus Z\end{array}\right\}  & \to & \left\{ \begin{array}{c}
\mathrm{finite\,\acute{e}tale\, covers}\\
\mathrm{of\,}X\smallsetminus Z\end{array}\right\} \\
Y & \mapsto & Y\times_{X}(X\smallsetminus Z)\end{eqnarray*}
is essentially surjective. 
\end{thm}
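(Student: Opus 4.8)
The plan is to reduce to the situation already handled by L\"utkebohmert by means of embedded resolution of singularities, and then to descend the resulting cover back down to $X$. First I would reduce to the case where $X=\mathrm{Spa}\,A$ is affinoid and normal. Essential surjectivity can be checked locally on $X$: since finiteness, normality, and \'etaleness over a fixed open are all local on the target, covers glue, and the gluing data automatically satisfies the cocycle condition by the full faithfulness established in the preceding proposition. So it suffices to extend $W\to X\smallsetminus Z$ over the members of an affinoid cover of $X$ and reassemble.

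Next I would apply embedded resolution. Since $K$ has characteristic zero, $A$ is excellent, so the pair consisting of $\mathrm{Spec}\,A$ and the closed subscheme cut out by $Z$ admits an embedded resolution in the sense of Temkin: a proper morphism of schemes with regular source, which is an isomorphism over the regular locus disjoint from $Z$ and makes the reduced total transform of $Z$ a strict normal crossings divisor. Transporting this through relative analytification (rigid GAGA), I obtain a proper morphism $\rho\colon X'\to X$ with $X'$ smooth, with $D:=\rho^{-1}(Z)_{\mathrm{red}}$ a strict normal crossings divisor, and with $\rho$ an isomorphism over the dense open $U:=X_{\mathrm{reg}}\smallsetminus Z\subseteq X\smallsetminus Z$ (dense because $X$ is normal, so the singular locus $X_{\mathrm{sing}}$ is nowhere-dense). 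Pulling $W$ back gives a finite \'etale cover $W':=W\times_{X\smallsetminus Z}(X'\smallsetminus D)\to X'\smallsetminus D$, and now L\"utkebohmert's theorem \cite{Lutkebohmert} applies, since $X'$ is smooth and $D$ is strict normal crossings: $W'$ extends to a cover $\nu\colon Y'\to X'$, with $Y'$ normal, $\nu$ finite, and $\nu$ \'etale over $X'\smallsetminus D$.

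It then remains to descend $Y'$ to a cover of $X$. The composite $g:=\rho\circ\nu\colon Y'\to X$ is proper, so by Kiehl's proper mapping theorem $g_{\ast}\mathcal{O}_{Y'}$ is a coherent sheaf of $\mathcal{O}_{X}$-algebras; let $Y_{0}=\mathrm{Spa}_{X}(g_{\ast}\mathcal{O}_{Y'})$ be the associated finite $X$-space and let $Y\to Y_{0}$ be its normalization, which is again finite over $X$ because affinoid algebras over $K$ are excellent. Thus $Y$ is finite over $X$ and normal, and I claim it restricts to $W$ over $X\smallsetminus Z$. Over $U$ the morphism $\rho$ is an isomorphism and $\rho^{-1}(U)$ is disjoint from $D$, so $g^{-1}(U)\cong W|_{U}$ and hence $Y|_{U}\cong W|_{U}$. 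Therefore both $Y\times_{X}(X\smallsetminus Z)$ and $W$ are covers of the normal space $X\smallsetminus Z$ that are finite \'etale over the complement $U$ of the nowhere-dense closed subset $X_{\mathrm{sing}}\cap(X\smallsetminus Z)$, and they agree after restriction to $U$; by the full faithfulness already proven, the isomorphism over $U$ extends uniquely to an isomorphism $Y\times_{X}(X\smallsetminus Z)\cong W$. In particular $Y\to X$ is \'etale over $X\smallsetminus Z$, so $Y$ is a cover of $X$ \'etale over $X\smallsetminus Z$ restricting to $W$, as required.

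The genuine content lies in the two imported inputs --- L\"utkebohmert's extension theorem for strict normal crossings divisors on smooth rigid spaces, and the existence of embedded resolution analytically in characteristic zero --- and I expect the main obstacle to be organizing the interface between them: one must know that the scheme-theoretic resolution of $\mathrm{Spec}\,A$ transports to a proper modification of the rigid space $X$ preserving smoothness and the normal-crossings condition, and that $\rho$ is an isomorphism over a dense open of $X\smallsetminus Z$. By comparison the descent step is formal, the one delicate point being to verify that the pushed-down cover restricts to $W$ over \emph{all} of $X\smallsetminus Z$ rather than merely over the locus where $\rho$ is an isomorphism --- and this is exactly where the previously established full faithfulness does the work.
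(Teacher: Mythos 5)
Your proposal is essentially the paper's own proof: Temkin's embedded resolution applied to $\mathrm{Spec}\,A$ after reducing to the affinoid case, analytification to get a proper $\rho\colon X'\to X$ with $X'$ smooth and $\rho^{-1}(Z)^{\mathrm{red}}$ strict normal crossings, L\"utkebohmert's theorem to extend the pulled-back cover over $X'$, Kiehl's properness theorem to push forward and obtain a finite normal $Y\to X$, and a final appeal to full faithfulness to propagate the identification with $W$ from the locus where $\rho$ is an isomorphism to all of $X\smallsetminus Z$. The one point where the paper is more careful than you are: before invoking full faithfulness, it replaces the normalized pushforward by the Zariski closure of its restriction to $(X\smallsetminus Z)^{\mathrm{sm}}$ --- a union of irreducible components, hence still normal --- precisely so that the result is visibly a cover; in your write-up the assertion that $Y\times_{X}(X\smallsetminus Z)$ is a cover of $X\smallsetminus Z$ (needed for full faithfulness to apply, and requiring that no irreducible component of $Y$ maps into $Z\cup X_{\mathrm{sing}}$, so that the preimage of the removed set is nowhere-dense) is left unverified. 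This is a minor, fixable omission rather than a wrong turn: either insert the same closure step, or argue directly that every component of the normalization dominates $X$.
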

In other words, given a (surjective) finite étale cover $\pi:Y\to X\smallsetminus Z$,
we need to find a cover $\tilde{\pi}:\tilde{Y}\to X$ and an open
immersion $Y\to\tilde{Y}$ such that the diagram\[
\xymatrix{Y\ar[r]\ar[d]_{\pi} & \tilde{Y}\ar[d]^{\tilde{\pi}}\\
X\smallsetminus Z\ar[r] & X}
\]
is cartesian. We refer to this as the problem of \emph{extending }$Y$
to a cover of $X$. Note that by the full faithfulness proved above,
we're always free to work locally on $X$ when extending a given cover
of $X\smallsetminus Z$.

Until further notice, fix $K$ of characteristic zero. The key special
case is the following result.
\begin{thm}[Lütkebohmert]
\label{lutextendingsnc} If $X$ is a smooth rigid space and $D\subset X$
is a strict normal crossings divisor, then any finite étale cover
of $X\smallsetminus D$ extends to a cover of $X$.
\end{thm}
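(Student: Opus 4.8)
The plan is to reduce the whole statement, via the full faithfulness already proved, to a single finiteness assertion, and then to prove that assertion by a local Kummer analysis near $D$. Since extensions of a given cover are unique, local extensions constructed over an admissible affinoid cover of $X$ automatically agree on overlaps and glue; it therefore suffices to extend $Y\to X\smallsetminus D$ over an affinoid neighborhood of each point of $D$. The natural candidate is the relative normalization $\tilde Y$ of $X$ in the finite \'etale $\mathcal O_{X\smallsetminus D}$-algebra $\mathcal O_Y$. Over the normal space $X\smallsetminus D$ one has $\tilde Y\times_X(X\smallsetminus D)=Y$ automatically, since finite \'etale over normal is normal; and if $\tilde Y\to X$ turns out to be finite with $\tilde Y$ normal, then it is a cover in the required sense (\'etale away from the nowhere-dense set $D$) and the defining square is cartesian. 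So the entire theorem reduces to showing that this relative normalization is finite over $X$.

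The point is that, unlike in the scheme setting, finiteness of the normalization is \emph{not} automatic here: $X\smallsetminus D$ is an increasing union of affinoids, and a priori the integral closure of $\mathcal O_X(U)$ in $\mathcal O_Y$ could fail to be finite because the integral functions need not stay bounded as one approaches $D$. This is precisely the phenomenon responsible for the failure of the theorem in positive characteristic. To rule it out in characteristic zero I would invoke Bartenwerfer's Hebbarkeitssatz (Theorem \ref{hebbarkeitsatz}): affinoid algebras are excellent, hence Japanese, so the integral closure is finite provided its elements remain bounded near $D$, in which case Theorem \ref{hebbarkeitsatz} realizes them as honest functions on a normal affinoid finite over $U$. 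Thus everything comes down to a local boundedness statement for the cover near $D$.

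To prove boundedness I would pass to the explicit local model. Because $X$ is smooth and $D$ is strict normal crossings, near a point where $k$ branches meet we may choose coordinates $t_1,\dots,t_n$ presenting a neighborhood as an object \'etale over a polydisc with $D=\{t_1\cdots t_k=0\}$, so that $X\smallsetminus D$ is a product of $k$ punctured discs and $n-k$ discs. The heart of the matter is a rigid-analytic Abhyankar lemma: in characteristic zero every finite \'etale cover of this model is tame and is dominated by a Kummer cover, i.e. becomes completely split after pullback along $(w_1,\dots,w_n)\mapsto(w_1^{d_1},\dots,w_k^{d_k},w_{k+1},\dots,w_n)$ for suitable $d_i$. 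I would establish this by induction on $k$, the base case being the classification of finite \'etale covers of the punctured disc: exhausting it by closed annuli $r\le|t|\le s$, in characteristic zero the connected covers of a closed annulus are the Kummer covers $w^d=t$, and since the degree $d$ is constant along the exhaustion these glue to a Kummer cover of the whole punctured disc; the product case follows because the relevant fundamental group is pro-abelian, so every finite cover factors through a finite quotient of $\hat{\mathbf Z}^k$, diagonalized by Smith normal form. Granting this, the Kummer cover is manifestly bounded, the new coordinates satisfying $|w_i|=|t_i|^{1/d_i}\le 1$, so $Y$, being dominated by it, is bounded as well, which is exactly what was needed.

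The main obstacle is precisely this local classification: the computation that the tame fundamental group of the punctured polydisc over a characteristic-zero field is pro-abelian and topologically generated by Kummer loops. The difficulty is that there is no underlying homotopy type to exploit, as there is in the complex-analytic case treated by Stein and Grauert--Remmert, so one must control \'etale covers of the annulus directly, through admissible affinoid exhaustions together with a limit argument pinning down the Kummer degree. By comparison, the reduction to finiteness of the normalization, the application of Bartenwerfer's theorem, and the deduction of boundedness from the Kummer domination are routine, and the characteristic-zero hypothesis enters only---but essentially---through the tameness used in the local classification.
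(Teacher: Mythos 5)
There is a genuine gap, and it is exactly at the point you identify as the heart of the matter. Your reduction of the theorem to finiteness of the relative normalization, via gluing by full faithfulness and a boundedness statement near $D$ settled by Theorem \ref{hebbarkeitsatz}, is sound and is in the spirit of what actually happens. But your local classification is false in the main case of the theorem: you conflate the characteristic of $K$ with the \emph{residue} characteristic. Tameness of a finite \'etale cover is governed by the residue characteristic $p$ of $K$, and the theorem is asserted (and needed, for Theorem \ref{avfull}) for $K$ of characteristic zero with residue characteristic $p>0$. In that mixed-characteristic setting the fundamental group of a punctured disc --- indeed even of an unpunctured disc --- is not pro-abelian and is not topologically generated by Kummer loops. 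Concretely, $x\mapsto(1+x)^{p}-1$ is a connected finite \'etale self-cover of degree $p$ of the open unit disc over $\mathbf{C}_{p}$ (its derivative $p(1+x)^{p-1}$ is everywhere nonvanishing), and $y^{p}=1+t$ is a connected \'etale degree-$p$ cover of the annulus $|t|=1$ (connected because $1+t$ is not a $p$-th power in the residue ring $k[t,t^{-1}]$). Neither is dominated by any Kummer cover $w^{d}=t$: splitting after such a pullback would require an analytic $p$-th root of $1+w^{d}$ on a punctured disc, which does not exist up to the boundary. So the base case of your induction ("connected covers of a closed annulus are Kummer") fails for degrees divisible by $p$, and with it the whole Abhyankar-style analysis; it is correct only in equal characteristic zero, the case the paper regards as easy.

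Handling exactly these wild covers --- \'etale covers of polyannuli of degree divisible by the residue characteristic --- is the hard content of L\"utkebohmert's paper, and the paper does not reprove it: it quotes his Lemma 3.3 (Lemma \ref{lutcruciallemma}) as a black box, asserting that a cover of $S\times(\mathbf{B}^{r}\smallsetminus V(X_{1},\dots,X_{r}))$ which is \'etale over the complement of $V(X_{1}\cdots X_{r})$ extends over $S\times\mathbf{B}^{r}$. The remaining work in the paper is then organizational, and differs from your plan only there: Kiehl's tubular neighborhood lemma (Lemma \ref{tubularnbhood}) produces local product presentations $U\simeq S\times\mathbf{B}^{r}$ adapted to the branches of $D$, and one inducts on the maximal number $\iota(D)$ of components of $D$ through a point, gluing the covers extended over each $X\smallsetminus D_{i}$ by full faithfulness before applying Lemma \ref{lutcruciallemma} around the deepest stratum. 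If you want to complete your route honestly, you must replace your Kummer classification by the actual structure theory of covers of annuli in residue characteristic $p$ (where covers are only Kummer-dominated after shrinking the annulus and only in degree prime to $p$), which is precisely L\"utkebohmert's contribution.
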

This is more or less an immediate consequence of the arguments in
§3 in Lütkebohmert's paper \cite{Lutkebohmert} (and is implicit in
the proof of Theorem 3.1 of loc. cit.). For the convenience of the
reader, we explain the deduction in detail.  Let $\mathbf{B}^{r}=\lyxmathsym{ }\mathrm{Spa\,}K\left\langle X_{1},\dots,X_{r}\right\rangle $
denotes the $r$-dimensional closed affinoid ball.
\begin{lem}[Lemma 3.3 in \cite{Lutkebohmert}]
\label{lutcruciallemma}Let $S$ be a smooth $K$-affinoid space,
and let $r\geq1$ be any integer. If $Y_{0}$ is a cover of $S\times\left(\mathbf{B}^{r}\smallsetminus V(X_{1},\dots,X_{r})\right)$
which is étale over $S\times(\mathbf{B}^{r}\smallsetminus V(X_{1}\cdots X_{r}))$,
then $ $$Y_{0}$ extends to a cover $\tilde{Y}$ of $S\times\mathbf{B}^{r}$.
\end{lem}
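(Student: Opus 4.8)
The plan is to realize the extension $\tilde Y$ as a normalization and then check by hand that it is finite and that it is a cover. The starting observation is that any extension is forced: if $\tilde\pi:\tilde Y\to S\times\mathbf{B}^{r}$ is a cover restricting to $Y_{0}$ over the punctured ball $W:=S\times(\mathbf{B}^{r}\smallsetminus V(X_{1},\dots,X_{r}))$, then $\tilde Y$ is normal, finite over the affinoid $S\times\mathbf{B}^{r}$, and has the same total ring of fractions as $Y_{0}$; hence $\tilde Y$ must be the normalization of $S\times\mathbf{B}^{r}$ in that algebra. So the whole content is to construct this normalization and verify its properties. (Uniqueness of such an extension is also exactly what Bartenwerfer's Hebbarkeitssatz, Theorem \ref{hebbarkeitsatz}, guarantees, since a cover is determined by its restriction to the complement of a nowhere-dense set.)

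Concretely, write $A=\mathcal{O}(S\times\mathbf{B}^{r})$ and $U=S\times(\mathbf{B}^{r}\smallsetminus D)$ with $D=V(X_{1}\cdots X_{r})$ the coordinate divisor, so that $U\subset W$ is a dense Zariski-open with nowhere-dense complement $D$. By hypothesis $Y_{0}|_{U}\to U$ is finite \'etale, and since $U$ is dense in the normal space $S\times\mathbf{B}^{r}$ this cover determines a finite \'etale algebra $L$ over the total ring of fractions of $A$. I would then set $\tilde A$ to be the integral closure of $A$ in $L$ and $\tilde Y=\mathrm{Spa}\,\tilde A$.

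The crux is finiteness: that $\tilde A$ is a finite $A$-module. This is where I expect all the real work to be, and the clean route is to invoke that affinoid algebras are excellent, hence Japanese, so that the integral closure in the finite \'etale extension $L$ is module-finite; then $\tilde Y\to S\times\mathbf{B}^{r}$ is finite and $\tilde Y$ is a normal affinoid. The point demanding care is that $W$ and $U$ are \emph{not} affinoid, so finiteness of $\tilde A$ over the affinoid $A$ — equivalently, boundedness of the new functions as one approaches $V(X_{1},\dots,X_{r})$ — is not formal. If one cites excellence this is immediate and the argument is essentially characteristic-free; Lütkebohmert's original route instead establishes the finiteness by hand, and it is there that characteristic zero genuinely enters: in characteristic zero the cover is tamely ramified along $D$, so by Abhyankar's lemma it becomes finite \'etale after a Kummer base change $X_{i}\mapsto X_{i}^{n}$, the standard Kummer covers $z^{n}=X_{i}$ visibly extend across the divisor, and one descends the bounded (hence finite) extension back down, controlling the ramification explicitly rather than appealing to excellence. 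One would organize this as an induction on $r$, the base case $r=1$ being exactly the extension of a tame cover of the relative punctured disc $S\times(\mathbf{B}^{1}\smallsetminus\{0\})$.

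Finally I would check that $\tilde Y$ indeed extends $Y_{0}$ and is a cover. Since normalization commutes with the open localization $W\hookrightarrow S\times\mathbf{B}^{r}$, the base change $\tilde Y\times_{S\times\mathbf{B}^{r}}W$ is the normalization of $W$ in $L$; but $Y_{0}$ is normal, finite over $W$, and has function algebra $L$, so it \emph{is} that normalization, yielding a cartesian square and an open immersion $Y_{0}\hookrightarrow\tilde Y$. The same localization over $U$ identifies $\tilde Y\times_{S\times\mathbf{B}^{r}}U$ with the finite \'etale cover $Y_{0}|_{U}$, so $\tilde Y$ is finite \'etale over the complement of the nowhere-dense divisor $D$. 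That $\tilde\pi^{-1}(D)$ is itself nowhere-dense follows from the dimension count $\dim\tilde\pi^{-1}(D)=\dim D<\dim(S\times\mathbf{B}^{r})$ on each irreducible component, exactly as in the earlier proposition on covers, using that $S\times\mathbf{B}^{r}$ is smooth and $\tilde\pi$ is finite surjective. Hence $\tilde\pi:\tilde Y\to S\times\mathbf{B}^{r}$ is a cover restricting to $Y_{0}$, which is what we wanted.
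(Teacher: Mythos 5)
The paper offers no proof of this lemma at all---it is imported verbatim from L\"utkebohmert \cite{Lutkebohmert} (Lemma 3.3), with the hard content living in \S\S 2--3 of that paper---so your proposal has to stand on its own, and its primary route has a genuine gap. The gap is at the very first step of the construction: the ``finite \'etale algebra $L$ over the total ring of fractions of $A$ determined by $Y_{0}|_{U}$'' does not exist a priori. A finite \'etale cover of the rigid space $U$ is not given by a finite $\mathcal{O}(U)$-algebra, and the generic points of $\mathrm{Spec}\,A$ are not points of $U$; extracting a finite \'etale $\mathrm{Frac}(A)$-algebra from $Y_{0}$ amounts to proving that the cover is algebraic, i.e.\ that it can be defined by equations whose coefficients extend meromorphically across $V(X_{1},\dots,X_{r})$---which is the entire content of the lemma. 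Excellence (Japanese-ness) of affinoid algebras only yields module-finiteness of the integral closure of $A$ in a \emph{given} finite extension of $\mathrm{Frac}(A)$; it cannot manufacture that extension, and your own worry about ``boundedness of the new functions near $V(X_{1},\dots,X_{r})$'' is not discharged by citing it. A decisive sanity check: your ``clean route'' nowhere uses $\mathrm{char}\,K=0$, yet the paper notes in the introduction that essential surjectivity in Theorem \ref{extendingcovers} provably fails in positive characteristic. Already for $r=1$ and $S$ a point, the standard Artin--Schreier covers $y^{p}-y=f$ with $f\in\mathcal{O}(\mathbf{B}^{1}\smallsetminus\{0\})$ unbounded near $0$ are finite \'etale over the punctured ball but extend to no cover of $\mathbf{B}^{1}$; in such an example your chain of steps must break, and it breaks exactly at the definition of $L$ (equivalently, at the unargued identification of $Y_{0}$ with the normalization of $W$ in $L$, which again presupposes algebraicity). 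This is also the precise point of divergence from the scheme case (Lemma \ref{extendingschemecovers}), where a finite \'etale cover of a Zariski-open subscheme literally \emph{is} a finite algebra over it, so normalization succeeds; the rigid-analytic failure is that Zariski-open rigid subspaces carry many more finite \'etale covers than their scheme-theoretic counterparts.

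Your fallback sketch---tameness along $D$ (correct: the residue fields at the generic points of $D$ have characteristic zero), Abhyankar's lemma, Kummer base changes $X_{i}\mapsto X_{i}^{n}$, induction on $r$---is indeed the shape of L\"utkebohmert's actual argument, so you have correctly located where characteristic zero enters. But as written it assumes the key analytic finiteness rather than proving it: after the Kummer base change one still holds a cover defined only away from a proper analytic subset, and the assertion that it extends finitely across that locus is precisely the base case you defer to. That base case, extending a cover of $S\times(\mathbf{B}^{1}\smallsetminus\{0\})$ across $S\times\{0\}$, is L\"utkebohmert's Lemma 3.2, whose proof occupies \S 2 of \cite{Lutkebohmert} and runs through formal models and semistable reduction of relative curves; ``the standard Kummer covers $z^{n}=X_{i}$ visibly extend'' handles only the split Kummer part, not the boundedness of a general cover near the puncture. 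Nor can you appeal to purity in codimension $\geq2$ to cross the center $V(X_{1},\dots,X_{r})$: in the rigid category that purity statement appears in this paper only as a \emph{corollary} of the extension theorem, so using it here would be circular. The normalization-and-gluing bookkeeping in your final paragraph is fine once finiteness and algebraicity are in hand---but those are the theorem.
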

We also need a result of Kiehl on the existence of {}``tubular neighborhoods''
of strict normal crossings divisors in smooth rigid spaces.
\begin{lem}
\label{tubularnbhood}If $D\subset X$ is a strict normal crossings
divisor in a smooth rigid space, then for any (adic) point $x$ in
$X$ contained in exactly $r$ irreducible components $D_{1},\dots,D_{r}$
of $D$, we can find some small open affinoid $U\subset X$ containing
$x$ together with a smooth affinoid $S$ and an isomorphism $U\simeq S\times\mathbf{B}^{r}$,
under which the individual components $D_{i}\cap U$ containing $x$
identify with the zero loci of the coordinate functions $X_{i}\in\mathcal{O}(\mathbf{B}^{r})$.\end{lem}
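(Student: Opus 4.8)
The plan is to manufacture local coordinates on $X$ adapted to $D$ near $x$ and then recognize the resulting coordinate map as an isomorphism onto a product. Set $d=\dim X$. Because $D$ is a strict normal crossings divisor, I may shrink $X$ to an open neighborhood of $x$ on which $D_1,\dots,D_r$ are precisely the components meeting it, each $D_i$ is smooth, and $D_i=V(f_i)$ for some $f_i\in\mathcal{O}_X$ with $f_i(x)=0$. Transversality of the $D_i$ at $x$ together with smoothness of $X$ says exactly that $df_1,\dots,df_r$ are linearly independent in $\Omega^1_{X/K}$, a condition which persists on a neighborhood; since $\Omega^1_{X/K}$ is locally free of rank $d$, I can complete them by functions $g_1,\dots,g_{d-r}\in\mathcal{O}_X$ to a frame $df_1,\dots,df_r,dg_1,\dots,dg_{d-r}$ of $\Omega^1_{X/K}$ near $x$.

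After rescaling the $f_i$ and $g_j$ by powers of $\varpi$ so that each is bounded by $1$, I obtain a map \[ h=(f_1,\dots,f_r,g_1,\dots,g_{d-r})\colon U_0\longrightarrow \mathbf{B}^r\times\mathbf{B}^{d-r}, \] pulling back the coordinate functions $X_i$ to $f_i$ and $Y_j$ to $g_j$. By the Jacobian criterion, the fact that the $df_i,dg_j$ form a frame of $\Omega^1_{X/K}$ at $x$ shows that $h$ is �tale in a neighborhood of $x$, and by construction $h(x)$ has vanishing first $r$ coordinates while $V(X_i)$ pulls back to $D_i$.

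The crux --- and the step I expect to cause the most trouble --- is to upgrade $h$ from an �tale map to an \emph{isomorphism} onto a product neighborhood, since �tale maps of rigid spaces are emphatically not local isomorphisms for the analytic topology (witness $z\mapsto z^2$ on an annulus). The point is that this obstruction is global rather than local: $h$ is unramified, so its diagonal is an open immersion \cite{Hub96}, and therefore after shrinking $U_0$ to a sufficiently small affinoid $U\ni x$ the restriction $h|_U$ becomes injective (one separates $x$ from the finitely many other points of its fibre). A flat unramified monomorphism is an open immersion, so $h|_U$ identifies $U$ with an open affinoid $W\subset\mathbf{B}^r\times\mathbf{B}^{d-r}$; this is precisely the rigid-analytic inverse function theorem, and it is here, rather than in the coordinate bookkeeping, that the generality over an arbitrary adic point $x$ must be absorbed, via Huber's structure theory of �tale morphisms.

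Finally I would arrange $W$ to be a product. Since $h(x)=(0,\dots,0,w)$ with $w\in\mathbf{B}^{d-r}$, I may shrink $W$ to $P\times S$, where $P$ is a product of closed discs about $0$ in the $X_i$-coordinates and $S\subset\mathbf{B}^{d-r}$ is a small open affinoid neighborhood of $w$; here $S$ is smooth, being open in a smooth space. Choosing the radii of the discs in $P$ to lie in $|K^{\times}|$ and rescaling each $X_i$ by a scalar of the appropriate absolute value identifies $P$ with the unit ball $\mathbf{B}^r$, so that $U\simeq h^{-1}(P\times S)\simeq S\times\mathbf{B}^r$. Under this isomorphism $X_i$ pulls back to a scalar multiple of $f_i$, whence $D_i\cap U=V(f_i)$ is carried to the coordinate hyperplane $V(X_i)$, as desired; the only point requiring a little care is that the initial rescaling of $f_i$ be chosen so that $f_i$ surjects onto a full disc along the slice through $x$.
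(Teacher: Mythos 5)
You have located the crux correctly but the step you flagged is in fact a genuine gap, and it is not repairable as stated. In rigid geometry, ``injective \'etale'' is strictly weaker than ``monomorphism,'' and the difference is exactly residue field extensions: separating $x$ from the other points of its fibre (which in any case only controls the single fibre over $h(x)$, not nearby fibres --- for that you must arrange that $U\times_{\mathbf{B}^d}U$ avoids the closed complement of the diagonal) can still leave $h|_U$ a finite \'etale map of degree $\geq 2$ that is bijective on adic points. Concretely, take $K=\mathbf{Q}_p$ with $p$ odd, $X=\mathbf{B}^2$ with coordinates $(z,w)$, $D_1=V(z^2-2)$ and $D_2=V(w)$: this is a strict normal crossings divisor with smooth irreducible components, and $x=D_1\cap D_2$ is a single classical point with $k(x)=\mathbf{Q}_p(\sqrt{2})$. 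Your map $h=(f_1,f_2)$ must send $x$ to the origin of $\mathbf{B}^2$ (since $f_i(x)=0$), whose residue field is $\mathbf{Q}_p$; hence $k(h(x))\subsetneq k(x)$ and $h$ has degree $\geq 2$ on \emph{every} affinoid neighborhood of $x$, even though it is injective on points there (every $X_1$-value near $0$ has $X_1+2$ a non-square, so each nearby fibre is a single point with quadratic residue field). No shrinking produces an open immersion, and since $r=d$ here there is no freedom in choosing the $g_j$ to absorb the extension. The same obstruction, aggravated, appears at non-classical and higher-rank points, which the lemma explicitly includes.

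The structural moral is that the local model cannot have a polydisc $\mathbf{B}^{d-r}$ as its base: the factor $S$ in the lemma must be allowed to be a genuinely nontrivial smooth affinoid --- in the example above, $S=\mathrm{Spa}\,\mathbf{Q}_p(\sqrt{2})$, and indeed $\{|z^2-2|\leq\epsilon,\,|w|\leq\epsilon\}\simeq S\times\mathbf{B}^2$ with $D_i\cap U=V(X_i)$, since $z+\sqrt{2}$ is a unit on $U$. The natural way to get this is to fiber a neighborhood of $x$ over an open affinoid $S$ \emph{inside the smooth intersection} $D_1\cap\cdots\cap D_r$, i.e., to produce a retraction of a neighborhood onto $\cap_i D_i$ with polydisc fibres; that is precisely a tubular neighborhood theorem, and it is what the paper invokes (Kiehl's Theorem 1.18, via a careful reading). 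Your coordinate bookkeeping ($f_i$ cutting out $D_i$, completing $df_1,\dots,df_r$ to a frame) matches the input to such a statement, but the passage from the resulting \'etale map to a product decomposition requires Kiehl-type arguments fibered over $S$, not an inverse function theorem with target a polydisc, which the counterexample shows is false.
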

\begin{proof}
This follows from a careful reading of Theorem 1.18 in \cite{Kiehl}
(cf. also \cite[Theorem 2.11]{Mitsui}).
\end{proof}
Granted these results, we deduce Theorem \ref{lutextendingsnc} as
follows. By full faithfulness we can assume that $X$ is quasicompact,
or even affinoid.  We now argue by induction on the maximal number
$\iota(D)$ of irreducible components of $D$ passing through any
individual point of $X$. If $\iota(D)=1$, then $D$ is smooth, so
arguing locally around any point in $D$, Lemma \ref{tubularnbhood}
puts us exactly in the situation covered by the case $r=1$ of Lemma
\ref{lutcruciallemma}. If $\iota(D)=n,$ then locally on $X$ we
can assume that $D$ has (at most) $n$ smooth components $D_{1},D_{2},\dots,D_{n}$.
By the induction hypothesis, any finite étale cover $Y$ of $X\smallsetminus D$
extends to a cover $Y_{i}$ of  $X\smallsetminus D_{i}$ for each
$1\leq i\leq n$, since $\iota(D\smallsetminus D_{i})\leq n-1$ for
$D\smallsetminus D_{i}$ viewed as a strict normal crossings divisor
in $X\smallsetminus D_{i}$.  By full faithfulness the $Y_{i}$'s
glue to a cover $Y_{0}$ of $X\smallsetminus\cap_{1\leq i\leq n}D_{i}$,
and locally around any point in $\cap_{1\leq i\leq n}D_{i}$ Lemma
\ref{tubularnbhood} again puts us in the situation handled by Lemma
\ref{lutcruciallemma}, so $Y_{0}$ extends to a cover $\tilde{Y}$
of $X$, as desired.
\begin{proof}[Proof of Theorem \ref{coversextend}]
We can assume that $X=\mathrm{Spa}\, A$ is an affinoid rigid space,
so $Z=\mathrm{Spa}\, B$ is also affinoid, and we get a corresponding
closed immersion of schemes $\mathcal{Z}=\mathrm{Spec}\, B\to\mathcal{X}=\mathrm{Spec}\, A$.
 These are quasi-excellent schemes over $\mathbf{Q}$, so according
to Theorem 1.11 in \cite{Temkin}, we can find a projective birational
morphism $f:\mathcal{X}'\to\mathcal{X}$ such that $\mathcal{X}'$
is regular and $f^{-1}(\mathcal{Z})^{\mathrm{red}}$ is a strict normal
crossings divisor, and such that $f$ is an isomorphism away from
$\mathcal{Z}\cup\mathcal{X}^{\mathrm{sing}}$.  Analytifying, we get
a proper morphism of rigid spaces $g:X'\to X$ with $X'$ smooth such
that $g^{-1}(Z)^{\mathrm{red}}$ is a strict normal crossings divisor.

Suppose now that we're given a finite étale cover $Y\to X\smallsetminus Z$.
  Base changing along $g$, we get a finite étale cover of $X'\smallsetminus g^{-1}(Z$),
which then extends to a cover $h:Y'\to X'$ by Theorem \ref{lutextendingsnc}.
Now, since $g\circ h$ is proper, the sheaf $(g\circ h)_{\ast}\mathcal{O}_{Y'}$
defines a sheaf of coherent $\mathcal{O}_{X}$-algebras by \cite{KiehlCoherence}.
Taking the normalization of the affinoid space associated with the
global sections of this sheaf, we get a normal affinoid $Y''$ together
with a finite map $Y''\to X$ and a canonical isomorphism $Y''|_{(X\smallsetminus Z)^{\mathrm{sm}}}\cong Y|_{(X\smallsetminus Z)^{\mathrm{sm}}}$.
The cover $\tilde{Y}\to X$ we seek can then be defined as the Zariski
closure of $Y''|_{(X\smallsetminus Z)^{\mathrm{sm}}}$ in $Y''$;
note that this is just a union of irreducible components of $Y''$,
so it's still normal, and it's easy to check that $\tilde{Y}$ is
a cover of $X$. Finally, since $\tilde{Y}$ and $Y$ are canonically
isomorphic after restriction to $(X\smallsetminus Z)^{\mathrm{sm}}$,
the full faithfulness argument shows that this isomorphism extends
to an isomorphism $\tilde{Y}|_{X\smallsetminus Z}\cong Y$, since
$(X\smallsetminus Z)^{\mathrm{sm}}$ is a dense Zariski-open subset
of $X\smallsetminus Z$. This concludes the proof.
\end{proof}
For completeness, we state the following mild generalization of Theorem
\ref{coversextend}.
\begin{thm}
Let $X$ be a normal rigid space over a characteristic zero complete
nonarchimedean field $K$, and let $V\subset X$ be any closed nowhere-dense
analytic subset. Suppose that $Y\to X\smallsetminus V$ is a cover,
and that there is some closed nowhere-dense analytic set $W\subset X\smallsetminus V$
such that $V\cup W$ is an analytic set in $X$ and such that \[
Y\times_{(X\smallsetminus V)}(X\smallsetminus V\cup W)\to X\smallsetminus V\cup W\]
is finite étale. Then $Y$ extends to a cover $\tilde{Y}\to X$.\end{thm}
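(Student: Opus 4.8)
The plan is to reduce this statement to Theorem \ref{coversextend} by temporarily discarding the extra branch locus $W$ and then recovering the agreement over all of $X\smallsetminus V$ via full faithfulness. First I would pass to the finite étale part: set $Y^{\circ}=Y\times_{(X\smallsetminus V)}(X\smallsetminus(V\cup W))$, which by hypothesis is a genuine finite étale cover of the open subspace $X\smallsetminus(V\cup W)$. The idea is that Theorem \ref{coversextend} already knows how to extend finite étale covers across closed nowhere-dense analytic subsets, so it will extend $Y^{\circ}$ across $V\cup W$ all the way to $X$; the only remaining work is to check that the resulting cover of $X$ restricts to $Y$, and not merely to $Y^{\circ}$, over $X\smallsetminus V$.

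Next I would verify that $V\cup W$ is a closed nowhere-dense analytic subset of $X$, so that Theorem \ref{coversextend} applies to the pair $(X,V\cup W)$. It is closed and analytic by assumption, and it is nowhere-dense because it contains no irreducible component of $X$: if some irreducible component $X_i$ of $X$ were contained in $V\cup W$, then, since $W\cap V=\varnothing$, we would have $X_i\smallsetminus V\subseteq(V\cup W)\cap(X\smallsetminus V)=W$; but $X_i\smallsetminus V=X_i\cap(X\smallsetminus V)$ is a nonempty irreducible component of the normal space $X\smallsetminus V$, contradicting the nowhere-density of $W$. Applying Theorem \ref{coversextend} to the finite étale cover $Y^{\circ}\to X\smallsetminus(V\cup W)$ then produces a cover $\tilde{\pi}:\tilde{Y}\to X$, étale over $X\smallsetminus(V\cup W)$, together with a canonical identification $\tilde{Y}\times_X(X\smallsetminus(V\cup W))\cong Y^{\circ}$.

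It remains to promote this to an isomorphism $\tilde{Y}\times_X(X\smallsetminus V)\cong Y$ of covers of $X\smallsetminus V$. Here I would invoke the full faithfulness of the restriction functor on covers established above, applied with normal base $X\smallsetminus V$ and nowhere-dense closed analytic subset $W\subset X\smallsetminus V$. The restriction $\tilde{Y}\times_X(X\smallsetminus V)$ is finite and surjective over $X\smallsetminus V$ with normal source, and it is finite étale over $(X\smallsetminus V)\smallsetminus W$, so it is a cover of $X\smallsetminus V$. Both this cover and the given cover $Y$ restrict compatibly to $Y^{\circ}$ over $(X\smallsetminus V)\smallsetminus W=X\smallsetminus(V\cup W)$, and full faithfulness then upgrades the resulting isomorphism over $(X\smallsetminus V)\smallsetminus W$ to an isomorphism over all of $X\smallsetminus V$. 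This exhibits $\tilde{Y}$ as the sought extension of $Y$.

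The substance of the argument is carried entirely by Theorem \ref{coversextend}, so I do not expect a serious obstacle. The one point genuinely requiring care is the final gluing step: a priori the cover produced over $X$ is only pinned down over the smaller locus $X\smallsetminus(V\cup W)$ where $Y$ was assumed étale, and one must ensure it restricts to $Y$ on the whole of $X\smallsetminus V$. This is precisely the situation full faithfulness is designed to control, and the only other nontrivial verification is the short irreducible-components argument that $V\cup W$ is nowhere-dense in $X$.
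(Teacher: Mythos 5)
Your proposal is correct and follows essentially the same route as the paper: apply Theorem \ref{coversextend} with $Z=V\cup W$ to extend the finite \'etale cover $Y\times_{(X\smallsetminus V)}(X\smallsetminus(V\cup W))$ to a cover $\tilde{Y}\to X$, then use full faithfulness of restriction (over the normal base $X\smallsetminus V$ with nowhere-dense subset $W$) to upgrade the identification over $X\smallsetminus(V\cup W)$ to $\tilde{Y}|_{X\smallsetminus V}\cong Y$. Your explicit verifications that $V\cup W$ is nowhere-dense in $X$ and that $\tilde{Y}|_{X\smallsetminus V}$ is itself a cover are details the paper leaves implicit, and they are carried out correctly.
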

\begin{proof}
Apply Theorem \ref{coversextend} with $Z=V\cup W$ to construct $\tilde{Y}\to X$
extending \[
Y\times_{(X\smallsetminus V)}(X\smallsetminus V\cup W)\to X\smallsetminus V\cup W,\]
and then use full faithfulness to deduce that $\tilde{Y}|_{X\smallsetminus V}\cong Y$.
\end{proof}
Combining this extension theorem with classical Zariski-Nagata purity,
we get a purity theorem for rigid analytic spaces.
\begin{cor}
Let $X$ be a smooth rigid analytic space over a characteristic zero
complete nonarchimedean field, and let $Z\subset X$ be any closed
analytic subset which is everywhere of codimension $\geq2$.  Then
finite étale covers of $X$ are equivalent to finite étale covers
of $X\smallsetminus Z$.
\end{cor}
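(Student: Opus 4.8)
The plan is to combine the essential surjectivity of Theorem~\ref{coversextend} with the classical purity of the branch locus. First I would reduce the statement to a purity assertion. A finite étale surjection onto $X$ is automatically a cover in our sense, since its source is smooth and hence normal; thus finite étale covers of $X$ are exactly the covers of $X$ that happen to be étale over all of $X$, and the inclusion of the former into the category of covers of $X$ that are étale over $X\smallsetminus Z$ is visibly fully faithful. Composing this inclusion with the equivalence of Theorem~\ref{extendingcovers} between covers of $X$ étale over $X\smallsetminus Z$ and finite étale covers of $X\smallsetminus Z$, the corollary reduces to the following: \emph{every cover $\pi:\tilde{Y}\to X$ which is finite étale over $X\smallsetminus Z$ is in fact finite étale over all of $X$}. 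Here we may assume $X$ is connected, treating connected components (and non-surjective finite étale covers) separately by additivity; if $\dim X\leq 1$ then the codimension hypothesis forces $Z=\varnothing$ and there is nothing to prove, so we may assume $Z$ is nowhere-dense and $X\smallsetminus Z$ is connected.

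This last assertion is local on $X$ and can be checked at classical points. Indeed, the non-étale locus of the finite morphism $\pi$ is a closed analytic subset of $\tilde{Y}$ contained in $\pi^{-1}(Z)$, and its image in $X$ is closed; since a nonempty closed analytic set contains a classical point, it suffices to show that $\pi$ is étale at every classical point $y\in\tilde{Y}$ lying over a classical point $x\in Z$. Étale-ness of a finite morphism is detected on completed local rings, the completion map $\mathcal{O}_{X,x}\to\widehat{\mathcal{O}}_{X,x}$ being faithfully flat, so I would fix such a pair $(x,y)$, set $R=\widehat{\mathcal{O}}_{X,x}$ and $S=\widehat{\mathcal{O}}_{\tilde{Y},y}$, and prove that $R\to S$ is finite étale.

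Now $R$ is a complete regular local ring, because $X$ is smooth, and $S$ is a finite $R$-algebra which is a normal local domain: finiteness is inherited from $\pi$, and normality of $\widehat{\mathcal{O}}_{\tilde{Y},y}$ follows since the local rings of the normal space $\tilde{Y}$ are excellent, so their completions remain normal (a normal Noetherian local ring being moreover a domain). Because the component of $\tilde{Y}$ through $y$ surjects onto $X$, the finite map $\mathrm{Spec}\,S\to\mathrm{Spec}\,R$ is dominant, and it is étale away from $V(I)$, where $I\subset R$ is generated by the ideal of $Z$; excellence of the local rings of rigid spaces guarantees that completion preserves codimension, so $V(I)$ has codimension $\geq 2$ in $\mathrm{Spec}\,R$. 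This is exactly the setting of classical Zariski--Nagata purity of the branch locus: for a finite dominant morphism to a regular scheme from a normal scheme, the branch locus is empty or pure of codimension one. Since here it is contained in the codimension-$\geq 2$ locus $V(I)$, it must be empty, so $R\to S$ is finite étale and $\pi$ is étale at $y$. As $(x,y)$ was arbitrary, $\pi$ is finite étale over $X$, completing the reduction.

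The main obstacle, and the only step requiring more than formal manipulation, is the passage between the rigid-analytic notion of étale-ness and the algebraic purity theorem. This rests entirely on the (quasi-)excellence of the local rings of rigid spaces: that smoothness makes $\widehat{\mathcal{O}}_{X,x}$ regular, that normality of $\tilde{Y}$ survives completion, and that completion preserves the codimension of $Z$. Granting excellence---which is used elsewhere in this paper---each of these is standard, and the remaining bookkeeping (the reduction to connected $X$, the treatment of non-surjective finite étale covers, and the fact that the non-étale locus meets the classical points whenever it is nonempty) is routine.
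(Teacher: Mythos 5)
Your proposal is correct and takes essentially the same route as the paper, which derives this corollary in one line by combining the extension theorem (Theorem \ref{extendingcovers}) with classical Zariski--Nagata purity --- exactly your reduction. Your passage to completed local rings at classical points, using excellence of the local rings of rigid spaces to preserve regularity, normality, and the codimension of $Z$ under completion, is precisely the standard fleshing-out of the details the paper leaves implicit.
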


\section{Vanishing and comparison theorems}

\subsection{The affinoid comparison theorem}

In this section we prove Theorem \ref{comparison}. Note that when
$\mathscr{F}$ is a constant sheaf of torsion abelian groups, this
theorem is exactly Corollary 3.2.3 in \cite{Hub96}, and we'll eventually
reduce to this case. The crucial input is the following lemma.
\begin{lem}
\label{keylemma}Let $A$ be a normal $K$-affinoid over a complete
nonarchimedean field $K$; set $\mathcal{X}=\mathrm{Spec}\, A$ and
$X=\mathrm{Spa}\, A$. Let $j:\mathcal{U}\to\mathcal{X}$ be the inclusion
of any Zariski-open subset, with analytification $j^{\mathrm{an}}:U\to X$,
and let $ $$\mathscr{F}$ be a locally constant constructible sheaf
of $\mathbf{Z}/m\mathbf{Z}$-modules on $\mathcal{U}_{\mathrm{\acute{e}t}}$
for some $m$. Then writing $\mu_{X}:X_{\mathrm{\acute{e}t}}\to\mathcal{X}_{\mathrm{\acute{e}t}}$
as before, the natural map\[
H_{\mathrm{\acute{e}t}}^{n}(\mathcal{X},j_{!}\mathscr{F})\to H_{\mathrm{\acute{e}t}}^{n}(X,\mu_{X}^{\ast}j_{!}\mathscr{F})\]
is an isomorphism for all $n\geq0$.
\end{lem}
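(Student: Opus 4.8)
The plan is to prove the comparison by étale cohomological descent along a single finite surjective cover, chosen so that the coefficient sheaf becomes constant on every level of the associated Čech nerve; one then finishes level-by-level using Huber's constant-coefficient comparison (Corollary 3.2.3 in \cite{Hub96}). First I would reduce to $\mathcal X$ integral: since $A$ is normal and Noetherian it is a finite product of normal domains, so $\mathrm{Spec}\,A$ is a disjoint union of integral normal schemes, and on any component disjoint from $\mathcal U$ the sheaf $j_!\mathscr F$ vanishes. With $\mathcal X$ integral and normal, $\mathcal U$ is dense. Because $\mathscr F$ is locally constant constructible, I can choose a surjective finite étale Galois cover $g:\mathcal V\to\mathcal U$ with $g^{\ast}\mathscr F\cong\underline M$ constant. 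Using that affinoid algebras are excellent and $A$ is normal, the normalization $\bar g:\bar{\mathcal V}\to\mathcal X$ of $\mathcal X$ in $\mathcal V$ is finite and surjective, and satisfies $\bar{\mathcal V}\times_{\mathcal X}\mathcal U=\mathcal V$. Writing $\bar j:\mathcal V\hookrightarrow\bar{\mathcal V}$ for the open immersion, base change for open immersions across the resulting cartesian square gives $\bar g^{\ast}j_!\mathscr F\cong\bar j_!\underline M$ with \emph{constant} coefficients. Analytifying, I obtain a finite surjective cover $\bar g^{\mathrm{an}}:\bar V\to X$ of affinoids (with $\bar V=\mathrm{Spa}\,B$, $B$ finite over $A$) compatible with $\bar g$ under $\mu_X$; the compatibility of analytification with normalization and with finite étale covers is exactly the circle of ideas behind Theorem \ref{extendingcovers}.

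Next I would run cohomological descent on both sides. On the scheme side $\bar g$ is finite surjective, hence of universal cohomological descent for torsion sheaves; on the rigid side $\bar g^{\mathrm{an}}$ is surjective and qcqs, hence of universal cohomological descent by the observation recorded in the introduction. Thus $R\Gamma(\mathcal X,j_!\mathscr F)$ and $R\Gamma(X,\mu_X^{\ast}j_!\mathscr F)$ are computed as the totalizations of the cosimplicial objects $n\mapsto R\Gamma(\bar{\mathcal V}^{\,n},(j_!\mathscr F)|)$ and $n\mapsto R\Gamma(\bar V^{\,n},\dots)$ along the Čech nerves of $\bar g$ and $\bar g^{\mathrm{an}}$. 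The crucial point is that on each level the coefficients remain constant: the $(n{+}1)$-fold fibre power $\mathcal V^{(n+1)}=\mathcal V\times_{\mathcal U}\cdots\times_{\mathcal U}\mathcal V$ maps to $\mathcal U$ through the first projection to $\mathcal V$, so $\mathscr F$ pulls back to $\underline M$ there, and base change gives $(\bar g^{(n)})^{\ast}j_!\mathscr F\cong\bar j^{(n)}_!\underline M$ on $\bar{\mathcal V}^{\,n}$. Since analytification commutes with fibre products and finite morphisms, the two cosimplicial diagrams agree under $\mu$ level by level, and the comparison map is identified with the induced map of totalizations.

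It then suffices to prove the comparison level by level, i.e. for $\bar j^{(n)}_!\underline M$ on the affinoid-algebraizable scheme $\bar{\mathcal V}^{\,n}=\mathrm{Spec}\,B_n$ with $B_n$ finite over $A$. Here the coefficients are constant, so I would use the exact sequence $0\to \bar j^{(n)}_!\underline M\to\underline M\to i_{\ast}\underline M\to 0$ with $i$ the closed complement of $\mathcal V^{(n+1)}$, and reduce by the five lemma to the comparison for the constant sheaf $\underline M$ on $\bar{\mathcal V}^{\,n}$ and on its closed subscheme---both instances of Huber's Corollary 3.2.3---using that $i_{\ast}$ is exact, acyclic, and compatible with analytification. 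A level-wise quasi-isomorphism of cosimplicial complexes induces a quasi-isomorphism on totalizations, since homotopy limits preserve level-wise equivalences with no boundedness hypothesis needed, and this yields the lemma in all degrees $n\geq 0$.

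The main obstacle I anticipate is organizing the descent so that the two sides match \emph{functorially} in the cosimplicial variable: one must know that the normalization cover $\bar{\mathcal V}$ and its Čech nerve analytify compatibly to $\bar V$ and its nerve---this is where Theorem \ref{extendingcovers} and the compatibility of analytification with finite morphisms and fibre products enter---and that the rigid finite surjective cover $\bar g^{\mathrm{an}}$ is genuinely of universal cohomological descent. I would stress that phrasing the argument through descent, rather than through an (infinite) free resolution of $\mathscr F$ over the group algebra $\mathbf Z/m[\mathrm{Gal}(\mathcal V/\mathcal U)]$, is precisely what lets the proof avoid any finiteness hypothesis on the étale cohomological dimensions of $\mathcal X$ and $X$, which need not be finite for a general base field $K$.
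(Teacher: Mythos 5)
Your proposal is correct and follows essentially the same route as the paper: a finite \'etale cover trivializing $\mathscr{F}$ is extended to a finite surjective normalization cover $\bar{\mathcal V}\to\mathcal X$ via the schemes version of Theorem \ref{extendingcovers} (Lemma \ref{extendingschemecovers}), cohomological descent is run on both sides (using precisely the surjective-qcqs observation for the rigid side), the coefficients on each level of the \v{C}ech nerve are identified with extensions by zero of constant sheaves via base change, and the constant case is settled by the five lemma together with Huber's Corollary 3.2.3. The only cosmetic differences---totalizations in place of the descent spectral sequence, and a Galois (rather than merely finite \'etale) trivializing cover---do not change the substance of the argument.
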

In what follows we usually write $\mathscr{F}^{\mathrm{an}}=\mu_{X}^{\ast}\mathscr{F}$
when context is clear. 

Before continuing, note that if $j:\mathcal{U}\to\mathcal{X}$ is
any open immersion with closed complement $i:\mathcal{Z}\to\mathcal{X}$,
the four functors $j_{!},j^{\ast}$, $i_{\ast}$, $i^{\ast}$ and
their analytifications can be canonically and functorially commuted
with the appropriate $\mu^{\ast}$'s in the evident sense. Indeed,
for $j^{\ast}$ and $i^{\ast}$ this is obvious (by taking adjoints
of the obvious equivalences $j_{\ast}\mu_{U\ast}\cong\mu_{X\ast}j_{\ast}^{\mathrm{an}}$
and $i_{\ast}\mu_{Z\ast}\cong\mu_{X\ast}i_{\ast}^{\mathrm{an}}$),
for $j_{!}$ it is a special case of \cite[Corollary 7.1.4]{BerIHES},
and for $i_{\ast}$ it's a very special case of \cite[Theorem 3.7.2]{Hub96}.
Moreover, if $f:\mathcal{Y=\mathrm{Spec}\, B}\to\mathcal{X}$ is any
finite morphism with analytification $f^{\mathrm{an}}:Y=\mathrm{Spa}\, B\to X$,
then $\mu_{X}^{\ast}f_{\ast}\cong f_{\ast}^{\mathrm{an}}\mu_{Y}^{\ast}$
(by \cite[Theorem 3.7.2]{Hub96} and \cite[Proposition 2.6.4]{Hub96}
again) and $\mu_{Y}^{\ast}f^{\ast}\cong f^{\mathrm{an}\ast}\mu_{X}^{\ast}$
(by taking adjoints to the obvious equivalence $\mu_{X\ast}f_{\ast}^{\mathrm{an}}\cong f_{\ast}\mu_{Y\ast}$).
We'll use all of these compatibilities without further comment.
\begin{proof}[Proof of Theorem \ref{comparison}]
First, observe that all functors involved in the statement of the
theorem commute with filtered colimits: for $H_{\mathrm{\acute{e}t}}^{n}(\mathcal{X},-)$
this is standard, for $H_{\mathrm{\acute{e}t}}^{n}(X,-)$ this follows
from \cite[Lemma 2.3.13]{Hub96}, and for $\mu_{X}^{\ast}$ it is
trivial (because $\mu_{X}^{\ast}$ is a left adjoint). Writing $\mathscr{F}$
as the filtered colimit of its $m$-torsion subsheaves, we therefore
reduce to the case where $\mathscr{F}$ is killed by some integer
$m\geq1$. Since $\mathcal{X}$ is qcqs, we can write any sheaf of
$\mathbf{Z}/m\mathbf{Z}$-modules on $\mathcal{X}_{\mathrm{\acute{e}t}}$
as a filtered colimit of constructible sheaves of $\mathbf{Z}/m\mathbf{Z}$-modules,
cf. \cite[Tag 03SA]{Stacks}, which reduces us further to the case
where $\mathscr{F}$ is a constructible sheaf of $\mathbf{Z}/m\mathbf{Z}$-modules.

We now argue by induction on $\mathrm{dim}\,\mathcal{X}$ ($=\mathrm{dim}X$).
By Noether normalization for affinoids and the aforementioned compatibility
of $\mu^{\ast}$ with pushforward along finite maps, we can assume
that $A\simeq K\left\langle X_{1},\dots,K_{\mathrm{dim}X}\right\rangle $,
so in particular that $A$ is normal. Choose some dense Zariski-open
$j:\mathcal{U}\to\mathcal{X}$ such that $j^{\ast}\mathscr{F}$ is
locally constant. Writing $\mathscr{F}^{\mathrm{an}}=\mu_{X}^{\ast}\mathscr{F}$
for brevity and taking the cohomology of the sequence \[
0\to j_{!}j^{\ast}\mathscr{F}\to\mathscr{F}\to i_{\ast}i^{\ast}\mathscr{F}\to0\]
before and after applying $\mu^{\ast}$, we get a pair of long exact
sequence sitting in a commutative diagram\[
\xymatrix{\cdots\ar[r] & H_{\mathrm{\acute{e}t}}^{n}(\mathcal{X},j_{!}j^{\ast}\mathscr{F})\ar[r]\ar[d]^{\psi_{1,n}} & H_{\mathrm{\acute{e}t}}^{n}(\mathcal{X},\mathscr{F})\ar[r]\ar[d]^{\psi_{2,n}} & H_{\mathrm{\acute{e}t}}^{n}(\mathcal{Z},i^{\ast}\mathscr{F})\ar[r]\ar[d]^{\psi_{3,n}} & \cdots\\
\cdots\ar[r] & H_{\mathrm{\acute{e}t}}^{n}(X,j_{!}^{\mathrm{an}}j^{\mathrm{an}\ast}\mathscr{F}^{\mathrm{an}})\ar[r] & H_{\mathrm{\acute{e}t}}^{n}(X,\mathscr{F}^{\mathrm{an}})\ar[r] & H_{\mathrm{\acute{e}t}}^{n}(Z,i^{\mathrm{an}\ast}\mathscr{F}^{\mathrm{an}})\ar[r] & \cdots}
\]
(here we've freely used the various compatibilities of the four functors
and analytification). But then $\psi_{1,n}$ is an isomorphism for
all $n$ by Lemma \ref{keylemma}, and $\psi_{3,n}$ is an isomorphism
for all $n$ by the induction hypothesis, so $\psi_{2,n}$ is an isomorphism
for all $n$ by the five lemma.
\end{proof}
It remains to prove Lemma \ref{keylemma}. For this we make mild use
of cohomological descent; in particular, we need the following observation,
which might be useful in other contexts.
\begin{prop}
Let $f:Y\to X$ be a surjective qcqs map of rigid analytic spaces.
Then $f$ is universally of cohomological descent relative to the
étale topology.\end{prop}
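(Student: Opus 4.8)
The plan is to deduce the statement from the standard formalism of cohomological descent (see \cite{Stacks}, or SGA 4, Vbis) together with two classes of building blocks that are visibly of universal cohomological descent in the rigid-analytic \'etale topos, plus a geometric reduction that refines an arbitrary surjection by these building blocks. Recall that the assertion is encoded by the \v{C}ech nerve $\epsilon\colon Y_\bullet\to X$ with $Y_n=Y\times_X\cdots\times_X Y$: one must show that for every torsion abelian sheaf $\mathscr F$ on $X_{\mathrm{\acute{e}t}}$, and after every base change, the unit $\mathscr F\to R\epsilon_\ast\epsilon^\ast\mathscr F$ is an isomorphism. I would work throughout with the class $\mathcal D$ of morphisms of \emph{universal} cohomological descent and use its formal properties: it is stable under base change and composition, contains every morphism admitting a section, is local on the target for any topology whose covering maps lie in $\mathcal D$, and --- crucially --- has the right-cancellation (refinement) property that if $g\colon W\to Y$ is any morphism with $f\circ g\in\mathcal D$, then $f\in\mathcal D$. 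These facts are topos-theoretic and apply verbatim to $X_{\mathrm{\acute{e}t}}$.

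Next I would record the building blocks. First, a surjective \'etale qcqs map is a covering in $X_{\mathrm{\acute{e}t}}$, hence lies in $\mathcal D$; likewise any admissible affinoid covering lies in $\mathcal D$ by \'etale descent for the analytic topology (\cite{Hub96}), which supplies locality on the target. Second, every \emph{proper surjective} map lies in $\mathcal D$: checking the unit isomorphism on stalks at a geometric point $\bar x\to X$, Huber's proper base change theorem (\cite{Hub96}) identifies $(R\epsilon_\ast\epsilon^\ast\mathscr F)_{\bar x}$ with the totalization of the cohomology of $\mathscr F_{\bar x}$ along the \v{C}ech nerve of the nonempty fibre $Y_{\bar x}$, and cohomological descent for a surjection onto a point is immediate; stability of properness-surjectivity and of proper base change under base change yields the universal strengthening. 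Note that both building blocks are of universal cohomological descent for \emph{arbitrary} torsion coefficients and in any characteristic, so no restriction on $K$ or on $\mathscr F$ will be needed.

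With these in hand I would reduce to affinoids. By locality on the target I may assume $X=\mathrm{Spa}\,A$ is affinoid; then $Y$ is quasi-compact, a finite union of affinoids $Y_j$, the map $\coprod_j Y_j\to Y$ is an admissible covering and hence lies in $\mathcal D$, and by right-cancellation it suffices to treat $\coprod_j Y_j\to X$. Thus I may assume $f\colon Y=\mathrm{Spa}\,B\to X=\mathrm{Spa}\,A$ is a surjective map of affinoids, and (working one irreducible component at a time via \cite{ConradIrred}) that $X$ is irreducible. Because affinoid algebras are Jacobson and excellent and $f$ is surjective on rigid (that is, closed) points, the induced map of schemes $\mathrm{Spec}\,B\to\mathrm{Spec}\,A$ is surjective. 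It now suffices to produce a proper surjective modification $p\colon X'\to X$ (for instance the analytification of an admissible blow-up) together with a closed analytic subset $X''\subset Y\times_X X'$ that is \emph{finite surjective} over $X'$: for then $X''\to X$ is a composite of proper surjective maps, hence lies in $\mathcal D$, and it factors through $Y$, so right-cancellation yields $f\in\mathcal D$.

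Constructing such a ``multisection after a proper modification'' is the heart of the matter and the step I expect to be the main obstacle. The natural strategy is to build it on the scheme side, where generic points are available, and then analytify. Over the generic point of $\mathrm{Spec}\,A$ the fibre of $\mathrm{Spec}\,B\to\mathrm{Spec}\,A$ is nonempty; choosing a closed point of that fibre yields a dominant closed subscheme of $\mathrm{Spec}\,B$ that is generically finite, and spreading out gives a closed subscheme finite surjective over a dense Zariski-open $\mathcal U\subset\mathrm{Spec}\,A$. One then runs Noetherian induction on the closed complement and glues the resulting strata into a single proper modification $\mathcal X'\to\mathrm{Spec}\,A$ carrying a finite-surjective closed subscheme of $\mathrm{Spec}\,B\times_{\mathrm{Spec}\,A}\mathcal X'$; this gluing is precisely the $h$-/cdh-type construction showing that surjective morphisms are refined by proper-plus-finite covers (compare the $h$-topology material in \cite{Stacks}). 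The delicate points I anticipate are: carrying out the gluing uniformly into one proper modification; transferring between the scheme and the affinoid using the tautological bijection of Zariski-open subsets together with the compatibility of analytification with \emph{finite} and \emph{proper} morphisms; checking that the analytified $X''\to X$ is surjective on \emph{all} adic points and not merely the classical ones, which follows since a proper map has Zariski-closed image (\cite{ConradIrred}) and classical points are Zariski-dense; and sidestepping the fact that $\mathrm{Spec}\,B\to\mathrm{Spec}\,A$ is not of finite type by only ever extracting \emph{finite} subschemes and blow-ups, each of which analytifies correctly.
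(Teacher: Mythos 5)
Your formal framework is sound --- the class $\mathcal{D}$ of universal cohomological descent maps does have the stability, locality, and right-cancellation properties you invoke, and your two building blocks are essentially correct (though your claim in block (b) that ``cohomological descent for a surjection onto a point is immediate'' glosses a genuinely nontrivial point: the fibre over a geometric point $\mathrm{Spa}(C,C^{+})$ is an adic space, not a set, and what you need is that its structure map admits a section; for \emph{proper} maps this can be rescued by producing a classical point of the nonempty fibre and extending via the valuative criterion, but for higher-rank $C^{+}$ it is not formal and must be argued). The genuine gap is in the step you yourself flag as the heart: the construction of a finite multisection after a proper modification. Your construction lives on $\mathrm{Spec}\,B\to\mathrm{Spec}\,A$, and every tool you use there --- choosing a closed point of the generic fibre with \emph{finite} residue extension, spreading out, Chevalley constructibility, Noetherian induction with gluing --- is finite-type technology, whereas $A\to B$ is only \emph{topologically} of finite type. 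Concretely, $B\otimes_{A}\mathrm{Frac}(A)$ is not a finite-type $\mathrm{Frac}(A)$-algebra, so its closed points need not have finite residue extension (already for the projection $\mathbf{B}^{2}\to\mathbf{B}^{1}$, the prime cut out by $T\mapsto pS$ lies over the generic point with residue field $\mathrm{Frac}\,\mathbf{Q}_{p}\langle S\rangle$, which is not finite over $\mathrm{Frac}\,\mathbf{Q}_{p}\langle T\rangle$ --- note $\mathbf{Q}_{p}\langle S\rangle$ is not even module-finite over $\mathbf{Q}_{p}\langle pS\rangle$, since the induced affinoid map is an open immersion of discs). So ``choose a closed point and spread out'' does not produce a generically finite closed subscheme; the existence of even one such subscheme is exactly what needs proof, and the known route goes through Raynaud-style flattening by admissible formal blowups (Bosch--L\"utkebohmert's flattening techniques) plus quasi-section theorems --- substantial inputs absent from your sketch. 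Your proposed sidestep (``only ever extract finite subschemes and blow-ups'') does not address this, because the problem is in \emph{finding} the finite subscheme, not in analytifying it. A smaller instance of the same disease: your claim that surjectivity on rigid points plus Jacobson-ness forces $\mathrm{Spec}\,B\to\mathrm{Spec}\,A$ to be surjective tacitly uses constructibility of the image, which fails without finite type; the conclusion is true, but the correct argument uses supports of adic valuations (every prime $\mathfrak{p}\subset A$ is the kernel of an adic point, e.g.\ a Shilov point of $\mathrm{Spa}(A/\mathfrak{p})$, and surjectivity on adic points transports it to $B$).

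For comparison, the paper's proof avoids all refinement geometry and is strictly more efficient with the same key ingredient you need anyway. By Huber's base change theorem for \emph{qcqs} morphisms \cite[Theorem 4.1.1.(c)']{Hub96}, the stalkwise reduction in Conrad's proof of \cite[Theorem 7.7]{Conrad} applies to an arbitrary qcqs surjection, not merely a proper one; via \cite[Theorem 7.2]{Conrad} everything then reduces to the single fact that a nonempty qcqs adic space topologically of finite type over a possibly higher-rank geometric point $\mathrm{Spa}(C,C^{+})$ admits a section, as in \cite[Lemma 9.5]{Scholze}. In other words, the section-over-a-geometric-point lemma that your proper building block already requires proves the whole proposition at once, once proper base change is upgraded to qcqs base change; the multisection-after-modification machinery is then unnecessary --- and, as sketched, it is where your argument breaks.
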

\begin{proof}
Using Huber's general qcqs base change theorem \cite[Theorem 4.1.1.(c)']{Hub96}
and arguing as in the proof of \cite[Theorem 7.7]{Conrad}, one reduces
via \cite[Theorem 7.2]{Conrad} to the fact that if $Y$ is a qcqs
adic space which is topologically of finite type over a (possibly
higher rank) geometric point $\mathrm{Spa}(C,C^{+})$, then the structure
map $Y\to\mathrm{Spa}(C,C^{+})$ admits a section, which follows e.g.
as in \cite[Lemma 9.5]{Scholze}.
\end{proof}
In particular, if $\mathcal{X}=\mathrm{Spec}\, A$ is as above and
$f:\mathcal{Y}\to\mathcal{X}$ is a surjective finite morphism (or
more generally, a surjective proper morphism) with analytification
$f^{\mathrm{an}}:Y\to X=\mathrm{Spa}\, A$, then on the one hand $f$
is surjective and proper and therefore universally of cohomological
descent in the usual sense, while on the other hand $f^{\mathrm{an}}$
is surjective and qcqs and thus universally of cohomological descent
by the previous proposition. Considering the $0$-coskeleta $\mathcal{Y}_{/\mathcal{X}}^{\bullet}=\left(\mathcal{Y}_{/\mathcal{X}}^{(p)}=\underbrace{\mathcal{Y}\times_{\mathcal{X}}\cdots\times_{\mathcal{X}}\mathcal{Y}}_{p}\overset{\varepsilon_{p}}{\to}\mathcal{X}\right)_{p\geq0}$
and $Y_{/X}^{\bullet}=(\cdots)_{p\geq0}$ of the maps $f$ and $f^{\mathrm{an}}$,
we then get a pair of cohomological descent spectral sequences\[
E_{1}^{p,q}=H_{\mathrm{\acute{e}t}}^{q}(\mathcal{Y}_{/\mathcal{X}}^{(p)},\varepsilon_{p}^{\ast}\mathscr{F})\Rightarrow H_{\mathrm{\acute{e}t}}^{p+q}(\mathcal{X},\mathscr{F})\]
and\[
E_{1,\mathrm{an}}^{p,q}=H_{\mathrm{\acute{e}t}}^{q}(Y_{/X}^{(p)},\varepsilon_{p}^{\mathrm{an}\ast}\mathscr{F}^{\mathrm{an}})\Rightarrow H_{\mathrm{\acute{e}t}}^{p+q}(X,\mathscr{F}^{\mathrm{an}})\]
for any torsion abelian sheaf $\mathscr{F}$ on $\mathcal{X}_{\mathrm{\acute{e}t}}$,
as in \cite[Theorem 6.11]{Conrad}. By general nonsense, there is
a morphism from the first spectral sequence to the second compatible
with the evident maps between the individual terms in the $E_{1}$-pages
and in the abutments.
\begin{proof}[Proof of Lemma \ref{keylemma}]
We can assume that $\mathcal{U}$ is dense in $\mathcal{X}$. By
assumption, we can choose some surjective finite étale map $\pi:\mathcal{Y}\to\mathcal{U}$
such that $\pi^{\ast}\mathscr{F}$ is constant. Since $A$ is normal
and Nagata, we can apply the schemes version of Theorem \ref{extendingcovers}
to obtain a cartesian diagram\[
\xymatrix{\mathcal{Y}\ar[d]_{\pi}\ar[r]^{j_{0}} & \tilde{\mathcal{Y}}\ar[d]^{\tilde{\pi}}\\
\mathcal{U}\ar[r]^{j} & \mathcal{X}}
\]
where $\tilde{\pi}$ is finite and surjective and $\tilde{\mathcal{Y}}=\mathrm{Spec}\,\tilde{B}$
is normal (to be clear, $\tilde{\mathcal{Y}}$ is simply the normalization
of $\mathcal{X}$ in $\mathcal{Y}$, cf. Lemma \ref{extendingschemecovers}
below). Let $\tilde{\mathcal{Y}}_{/\mathcal{X}}^{\bullet}$ and $\mathcal{Y}_{/\mathcal{U}}^{\bullet}$
be the evident $0$-coskeleta, so we get a cartesian diagram\[
\xymatrix{\mathcal{Y}_{/\mathcal{U}}^{\bullet}\ar[d]_{\varepsilon_{\bullet}}\ar[r]^{j_{\bullet}} & \tilde{\mathcal{Y}}_{/\mathcal{X}}^{\bullet}\ar[d]^{\tilde{\varepsilon}_{\bullet}}\\
\mathcal{U}\ar[r]^{j} & \mathcal{X}}
\]
where the vertical maps are the evident augmentations. Denote the
analogous rigid analytic objects and diagrams by roman letters and
$(-)^{\mathrm{an}}$'s in the obvious way. Since $\tilde{\mathcal{Y}}\to\mathcal{X}$
and its analytification are universally of cohomological descent,
the discussion above gives a morphism of spectral sequences\begin{eqnarray*}
H_{\mathrm{\acute{e}t}}^{q}(\tilde{\mathcal{Y}}_{/\mathcal{X}}^{(p)},\tilde{\varepsilon}_{p}^{\ast}j_{!}\mathscr{F}) & \Rightarrow & H_{\mathrm{\acute{e}t}}^{p+q}(\mathcal{X},j_{!}\mathscr{F})\\
\downarrow\qquad &  & \qquad\downarrow\\
H_{\mathrm{\acute{e}t}}^{q}(\tilde{Y}_{/X}^{(p)},\tilde{\varepsilon}_{p}^{\mathrm{an}\ast}j_{!}^{\mathrm{an}}\mathscr{F}) & \Rightarrow & H_{\mathrm{\acute{e}t}}^{p+q}(X,j_{!}^{\mathrm{an}}\mathscr{F}^{\mathrm{an}})\end{eqnarray*}
and it now suffices to check that the individual maps on the $E_{1}$-page
are isomorphisms. Each of these maps sits as the lefthand vertical
arrow in a commutative squares\[
\xymatrix{H_{\mathrm{\acute{e}t}}^{q}(\tilde{\mathcal{Y}}_{/\mathcal{X}}^{(p)},\tilde{\varepsilon}_{p}^{\ast}j_{!}\mathscr{F})\ar[d]\ar[r] & H_{\mathrm{\acute{e}t}}^{q}(\tilde{\mathcal{Y}}_{/\mathcal{X}}^{(p)},j_{p!}\varepsilon_{p}^{\ast}\mathscr{F})\ar[d]\\
H_{\mathrm{\acute{e}t}}^{q}(\tilde{Y}_{/X}^{(p)},\tilde{\varepsilon}_{p}^{\mathrm{an}\ast}j_{!}^{\mathrm{an}}\mathscr{F})\ar[r] & H_{\mathrm{\acute{e}t}}^{q}(\tilde{Y}_{/X}^{(p)},j_{p!}^{\mathrm{an}}\varepsilon_{p}^{\mathrm{an}\ast}\mathscr{F})}
\]
where the horizontal arrows (exist and) are isomorphisms by proper
base change. It's now enough to show that the righthand vertical arrow
is an isomorphism; since each $\varepsilon_{p}^{\ast}\mathscr{F}$
is a constant sheaf, this reduces us to the special case of the lemma
where $\mathscr{F}$ is \emph{constant. }

Returning to the original notation of the lemma, we can now assume
that $\mathscr{F}$ is the constant\emph{ }sheaf associated with some
finite abelian group $G$. Again, we get a pair of long exact sequences
sitting in a commutative diagram\[
\xymatrix{\cdots\ar[r] & H_{\mathrm{\acute{e}t}}^{n}(\mathcal{X},j_{!}G)\ar[r]\ar[d]^{\psi_{1,n}} & H_{\mathrm{\acute{e}t}}^{n}(\mathcal{X},G)\ar[r]\ar[d]^{\psi_{2,n}} & H_{\mathrm{\acute{e}t}}^{n}(\mathcal{Z},G)\ar[r]\ar[d]^{\psi_{3,n}} & \cdots\\
\cdots\ar[r] & H_{\mathrm{\acute{e}t}}^{n}(X,j_{!}^{\mathrm{an}}G)\ar[r] & H_{\mathrm{\acute{e}t}}^{n}(X,G)\ar[r] & H_{\mathrm{\acute{e}t}}^{n}(Z,G)\ar[r] & \cdots}
\]
as before. But now $\psi_{2,n}$ and $\psi_{3,n}$ are isomorphisms
for all $n$ by \cite[Corollary 3.2.3]{Hub96}, so applying the five
lemma again we conclude that $\psi_{1,n}$ is an isomorphism for all
$n$, as desired.
\end{proof}
In this argument, we used the following schemes version of Theorem
\ref{extendingcovers}.
\begin{lem}
\label{extendingschemecovers}Let $X$ be any scheme which is normal
and Nagata, and let $Z\subset X$ be any closed nowhere-dense subset.
Then the restriction functor\begin{eqnarray*}
\left\{ \begin{array}{c}
\mathrm{covers\, of\,}X\\
\mathrm{\acute{e}tale\, over\,}X\smallsetminus Z\end{array}\right\}  & \to & \left\{ \begin{array}{c}
\mathrm{finite\,\acute{e}tale\, covers}\\
\mathrm{of\,}X\smallsetminus Z\end{array}\right\} \\
Y & \mapsto & Y\times_{X}(X\smallsetminus Z)\end{eqnarray*}
is an equivalence of categories, with essential inverse given by sending
a finite étale cover $U\to X\smallsetminus Z$ to the normalization
of $X$ in $U$.
\end{lem}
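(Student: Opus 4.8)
The plan is to exhibit the claimed essential inverse directly and check that the two functors are mutually quasi-inverse; full faithfulness of the restriction functor then comes for free. Write $F$ for the restriction functor $Y\mapsto Y\times_X(X\smallsetminus Z)$ and $N$ for the proposed inverse sending a finite \'etale cover $U\to X\smallsetminus Z$ to the normalization $\tilde{Y}$ of $X$ in $U$. Since the Nagata hypothesis forces $X$ to be locally Noetherian, and both categories as well as the functor $F$ decompose over the connected components of $X$, I would first reduce to the case where $X$ is connected, hence integral, with function field $K$; here $X\smallsetminus Z$ is a dense open with the same function field. A finite \'etale $U\to X\smallsetminus Z$ then splits as $\coprod_i U_i$ with each $U_i$ normal integral and $K(U_i)=L_i$ a finite separable extension of $K$, and $N(U)=\coprod_i X^{L_i}$ is the disjoint union of the normalizations $X^{L_i}$ of $X$ in the $L_i$. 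The Nagata condition is exactly what guarantees each $X^{L_i}\to X$ is \emph{finite}, which is the one nonformal input.

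Next I would check that $N$ really lands in the subcategory of covers of $X$ that are \'etale over $X\smallsetminus Z$. Normality of $\tilde{Y}$ holds by construction, finiteness by Nagata, and surjectivity because each $X^{L_i}\to X$ is finite and dominant. The key computation is that $F(N(U))\cong U$: normalization commutes with restriction to the open $X\smallsetminus Z$, so $\tilde{Y}\times_X(X\smallsetminus Z)$ is the normalization of $X\smallsetminus Z$ in $U$; but $U$ is already finite and normal over the normal base $X\smallsetminus Z$, hence equals its own normalization there. This simultaneously shows that $\tilde{Y}\smallsetminus\tilde{\pi}^{-1}(Z)\to X\smallsetminus Z$ is canonically the given finite \'etale map, and that $\tilde{\pi}^{-1}(Z)$ is nowhere-dense, since its open complement $\tilde{Y}\times_X(X\smallsetminus Z)$ contains every generic point of $\tilde{Y}$ (each such point maps to the generic point of $X$, which lies in $X\smallsetminus Z$). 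Hence $N(U)$ is a cover \'etale over $X\smallsetminus Z$ and $F\circ N\cong\mathrm{id}$, naturally in $U$.

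For the other composite $N\circ F\cong\mathrm{id}$, I would start from a cover $\pi:Y\to X$ \'etale over $X\smallsetminus Z$ and set $U=Y\smallsetminus\pi^{-1}(Z)$. Writing $Y=\coprod_j Y_j$ with $Y_j$ normal integral of function field $M_j$, the same reasoning as in the earlier proposition on covers (generic points avoid the nowhere-dense locus, and finite \'etale maps have open-and-closed image) shows that each $Y_j$ dominates $X$, so $M_j$ is finite over $K$. A normal integral scheme finite over $X$ and dominating it, with function field $M_j$, is by the universal property of normalization nothing but $X^{M_j}$; since $U\cap Y_j$ is a dense open of $Y_j$ with the same function field $M_j$, the normalization of $X$ in $U$ recovers $\coprod_j X^{M_j}=Y$. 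Functoriality of the normalization-in-$U$ construction renders all of these isomorphisms natural, so $F$ and $N$ are mutually quasi-inverse.

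I expect no deep obstacle here: this is the scheme-theoretic baby case of Theorem \ref{extendingcovers}, and the entire subtlety of the rigid statement (the appeal to resolution of singularities and L\"utkebohmert's theorem, and the genuine failure in positive characteristic) simply evaporates, because a \emph{cover} is permitted to ramify over $Z$ and normalization always supplies the extension. The only points demanding genuine care are bookkeeping: the reduction to the integral case, the verification that $\tilde{\pi}^{-1}(Z)$ is nowhere-dense, and the identification $F(N(U))\cong U$, which hinges on $U$ being already normal and finite \'etale over $X\smallsetminus Z$. The lone nonformal ingredient remains the finiteness of normalization, guaranteed precisely by the Nagata hypothesis.
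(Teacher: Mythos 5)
Your proposal is correct and follows essentially the same route as the paper: the paper's proof sketch likewise takes the essential inverse to be the normalization of $X$ in $U$, citing the Stacks Project for its finiteness (the Nagata input), its normality, and the cartesianness of the restriction diagram, and leaves the remaining verifications as an exercise. Your writeup simply fills in those verifications (reduction to the integral case, identification of components with normalizations $X^{L_i}$ in finite separable extensions, and the two composites being naturally isomorphic to the identity), all of which check out.
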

We remind the reader that a scheme is Nagata if it is locally Noetherian
and admits an open covering by spectra of universally Japanese rings,
cf. \cite[Tag 033R]{Stacks}.
\begin{proof}[Proof sketch]
Let $U\to X\smallsetminus Z$ be a finite étale cover, and let $\tilde{U}\to X$
be the normalization of $X$ in $U$ as in \cite[Tag 035G]{Stacks}.
By \cite[Tag 03GR]{Stacks}, $\tilde{U}\to X$ is a finite morphism.
By \cite[Tags 035K and 03GP]{Stacks}, the diagram\[
\xymatrix{U\ar[r]\ar[d] & \tilde{U}\ar[d]\\
X\smallsetminus Z\ar[r] & X}
\]
is cartesian. By \cite[Tag 035L]{Stacks}, the scheme $\tilde{U}$
is normal. The remaining verifications are left as an exercise for
the interested reader.
\end{proof}

\subsection{The reduction step}

In this section we deduce Theorem \ref{avfull} from Theorem \ref{avconstant}.
For clarity we focus on the vanishing statement in the theorem; it's
easy to see that the following argument also reduces the finiteness
of the groups $H_{\mathrm{\acute{e}t}}^{i}(X_{\widehat{\overline{X}}},\mathscr{F})$
to finiteness in the special case where $\mathscr{F}=\mathbf{Z}/n\mathbf{Z}$
is constant, and finiteness in the latter case follows from \cite[Theorem 1.1.1]{BerFiniteness}.
\begin{proof}[Proof of Theorem \ref{avfull}]
Fix a nonarchimedean field $K$ and a coefficient ring $\Lambda=\mathbf{Z}/n\mathbf{Z}$
as in the theorem. In what follows, {}``sheaf'' is shorthand for
{}``étale sheaf of $\Lambda$-modules''. For nonnegative integers
$d,i$, consider the following statement. 

Statement $\mathcal{T}_{d,i}$: {}``For all $K$-affinoids $X$ of
dimension $\leq d$, all Zariski-constructible sheaves $\mathscr{F}$
on $X$, and all integers $j>i$, we have $H_{\mathrm{\acute{e}t}}^{j}(X_{\widehat{\overline{K}}},\mathscr{F})=0$.'' 

We are trying to prove that $\mathcal{T}_{d,d}$ is true for all $d\geq0$.
The idea is to argue by ascending induction on $d$ and descending
induction on $i$. More precisely, it clearly suffices to assume the
truth of $\mathcal{T}_{d-1,d-1}$ and then show that $\mathcal{T}_{d,i+1}$
implies $\mathcal{T}_{d,i}$ for any $i\geq d$; as noted in the introduction,
$\mathcal{T}_{d,2d}$ is true for any $d\geq0$, which gives a starting
place for the descending induction. 

We break the details into several steps.

\textbf{Step One. }\emph{Suppose that $\mathcal{T}_{d-1,d-1}$ holds.
Then for any $d$-dimensional affinoid $X$, any Zariski-constructible
sheaf $\mathscr{F}$ on $X$, and any dense Zariski-open subset $j:U\to X$,
the natural map $H_{\mathrm{\acute{e}t}}^{i}(X_{\widehat{\overline{K}}},j_{!}j^{\ast}\mathscr{F})\to H_{\mathrm{\acute{e}t}}^{i}(X_{\widehat{\overline{K}}},\mathscr{F})$
is surjective for $i=d$ and bijective for $i>d$.}

Letting $i:Z\to X$ denote the closed complement, this is immediate
by looking at the long exact sequence\[
\cdots\to H_{\mathrm{\acute{e}t}}^{i-1}(Z_{\widehat{\overline{K}}},i^{\ast}\mathscr{F})\to H_{\mathrm{\acute{e}t}}^{i}(X_{\widehat{\overline{K}}},j_{!}j^{\ast}\mathscr{F})\to H_{\mathrm{\acute{e}t}}^{i}(X_{\widehat{\overline{K}}},\mathscr{F})\to H_{\mathrm{\acute{e}t}}^{i}(Z_{\widehat{\overline{K}}},i^{\ast}\mathscr{F})\to\cdots\]
associated with the short exact sequence\[
0\to j_{!}j^{\ast}\mathscr{F}\to\mathscr{F}\to i_{\ast}i^{\ast}\mathscr{F}\to0\]
and then applying $\mathcal{T}_{d-1,d-1}$ to control the outer terms.

\textbf{Step Two. }\emph{For any $d,i$, $\mathcal{T}_{d,i}$ holds
if and only if it holds for all }normal\emph{ affinoids.}

One direction is trivial. For the other direction, note that by Noether
normalization for affinoids \cite[Corollary 6.1.2/2]{BGR}, any $d$-dimensional
affinoid $X$ admits a finite map \[
\tau:X\to\mathbf{B}^{d}=\mathrm{Spa}\, K\left\langle T_{1},\dots,T_{d}\right\rangle ,\]
and $\tau_{\ast}=R\tau_{\ast}$ preserves Zariski-constructibility
by Proposition \ref{finitepush}.

\textbf{Step Three.}%
\footnote{This step was inspired by some constructions in Nori's beautiful paper
\cite{MR1940678}.%
}\textbf{ }\emph{Suppose that $\mathcal{T}_{d-1,d-1}$ holds. Then
for any $d$-dimensional normal affinoid $X$, any dense Zariski-open
subset $j:U\to X$, and any locally constant constructible sheaf $\mathscr{H}$
on $U$, we can find a Zariski-constrictible sheaf $\mathscr{G}$
on $X$ together with a surjection $s:\mathscr{G}\to j_{!}\mathscr{H}$,
such that moreover $H_{\mathrm{\acute{e}t}}^{i}(X_{\widehat{\overline{K}}},\mathscr{G})=0$
for all $i>d$.}

To prove this, suppose we are given $X,U$, and $\mathscr{H}$ as
in the statement. By definition, we can find a finite étale cover
$\pi:Y\to U$ such that $\pi^{\ast}\mathscr{H}$ is constant, i.e.
such that there exists a surjection $\Lambda_{Y}^{n}\to\pi^{\ast}\mathscr{H}$
for some $n$; fix such a surjection. This is adjoint to a surjection
$\pi_{!}(\Lambda_{Y}^{n})\to\mathscr{H}$,%
\footnote{Surjectivity here can be checked either by a direct calculation or
by {}``pure thought'' ($\pi_{!}$ is left adjoint to $\pi^{\ast}$,
and left adjoints preserve epimorphisms). %
} which extends by zero to a surjection $s:j_{!}\pi_{!}(\Lambda_{Y}^{n})\to j_{!}\mathscr{H}$.
We claim that the sheaf $\mathscr{G}=j_{!}\pi_{!}(\Lambda_{Y}^{n})$
has the required properties. Zariski-constructibility is clear from
the identification $\pi_{!}=\pi_{\ast}$ and Proposition \ref{finitepush}.
For the vanishing statement, we apply Theorem \ref{extendingcovers}
to extend $Y\to U$ to a cover $\tilde{\pi}:\tilde{Y}\to X$ sitting
in a cartesian diagram\[
\xymatrix{Y\ar[d]_{\pi}\ar[r]^{h} & \tilde{Y}\ar[d]^{\tilde{\pi}}\\
U\ar[r]^{j} & X}
\]
where $\tilde{Y}$ is a normal affinoid and $h$ is a dense Zariski-open
immersion. By proper base change and the finiteness of $\tilde{\pi}$,
we get isomorphisms\[
\mathscr{G}=j_{!}\pi_{!}(\Lambda_{Y}^{n})\cong\tilde{\pi}_{!}h_{!}(\Lambda_{Y}^{n})\cong\tilde{\pi}_{\ast}h_{!}(\Lambda_{Y}^{n}),\]
so $H_{\mathrm{\acute{e}t}}^{i}(X_{\widehat{\overline{K}}},\mathscr{G})\cong H_{\mathrm{\acute{e}t}}^{i}(\tilde{Y}_{\widehat{\overline{K}}},h_{!}\Lambda_{Y})^{\oplus n}$.
$ $Now, writing $i:V\to\tilde{Y}$ for the closed complement of $Y$,
we get exact sequences\[
H_{\mathrm{\acute{e}t}}^{i-1}(V_{\widehat{\overline{K}}},\Lambda_{V})\to H_{\mathrm{\acute{e}t}}^{i}(\tilde{Y}_{\widehat{\overline{K}}},h_{!}\Lambda_{Y})\to H_{\mathrm{\acute{e}t}}^{i}(\tilde{Y}_{\widehat{\overline{K}}},\Lambda_{\tilde{Y}})\]
for all $i$. Examining this sequence for any fixed $i>d$, we see
that the rightmost term vanishes by Theorem \ref{avconstant}, while
the leftmost term vanishes by the assumption that $\mathcal{T}_{d-1,d-1}$
holds.%
\footnote{One really needs the induction hypothesis to control the leftmost
term here, since $V$ may not be normal.%
} Therefore $H_{\mathrm{\acute{e}t}}^{i}(\tilde{Y}_{\widehat{\overline{K}}},h_{!}\Lambda_{Y})=0$
for $i>d$, as desired.

\textbf{Step Four. }\emph{Suppose that $\mathcal{T}_{d-1,d-1}$ holds.
Then $\mathcal{T}_{d,i+1}$ implies $\mathcal{T}_{d,i}$ for any $i\geq d$.}

Fix $d$ and $i\geq d$ as in the statement, and assume $\mathcal{T}_{d,i+1}$
is true. Let $X$ be a $d$-dimensional affinoid, and let $\mathscr{F}$
be a Zariski-constructible sheaf on $X$. We need to show that $H_{\mathrm{\acute{e}t}}^{i+1}(X_{\widehat{\overline{K}}},\mathscr{F})=0$.
By Step Two, we can assume $X$ is normal (or even that $X$ is the
$d$-dimensional affinoid ball). By Step One, it suffices to show
that $H_{\mathrm{\acute{e}t}}^{i+1}(X_{\widehat{\overline{K}}},j_{!}j^{\ast}\mathscr{F})=0$
where $j:U\to X$ is the inclusion of any dense Zariski-open subset.
Fix a choice of such a $U$ with the property that $j^{\ast}\mathscr{F}$
is locally constant. By Step Three, we can choose a Zariski-constructible
sheaf $\mathscr{G}$ on $X$ and a surjection $s:\mathscr{G}\to j_{!}j^{\ast}\mathscr{F}$
such that $H_{\mathrm{\acute{e}t}}^{n}(X_{\widehat{\overline{K}}},\mathscr{G})=0$
for all $n>d$. By Proposition \ref{twooutofthree}, the sheaf $\mathscr{K}=\ker s$
is Zariski-constructible. Now, looking at the exact sequence\[
H_{\mathrm{\acute{e}t}}^{i+1}(X_{\widehat{\overline{K}}},\mathscr{G})\to H_{\mathrm{\acute{e}t}}^{i+1}(X_{\widehat{\overline{K}}},j_{!}j^{\ast}\mathscr{F})\to H_{\mathrm{\acute{e}t}}^{i+2}(X_{\widehat{\overline{K}}},\mathscr{K}),\]
we see that the leftmost term vanishes by the construction of $\mathscr{G}$,
while the rightmost term vanishes by the induction hypothesis. Therefore\[
H_{\mathrm{\acute{e}t}}^{i+1}(X_{\widehat{\overline{K}}},j_{!}j^{\ast}\mathscr{F})=0,\]
as desired.
\end{proof}

\subsection{Constant coefficients}

In this section we prove Theorem \ref{avconstant}. The following
technical lemma plays an important role in the argument.
\begin{lem}
\label{affinoidpbexcellent}Let $K$ be a complete discretely valued
nonarchimedean field, and let $A$ be a reduced $K$-affinoid algebra.
Then $A^{\circ}$ is an excellent Noetherian ring. Moreover, the strict
Henselization of any localization of $A^{\circ}$ is excellent as
well.\end{lem}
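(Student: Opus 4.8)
The plan is to realize $A^{\circ}$ as a module-finite algebra over a restricted power series ring, and to reduce everything to the excellence of that base ring. First I would fix a Noether normalization: by \cite[Corollary 6.1.2/2]{BGR} there is a finite injection $K\langle T_{1},\dots,T_{d}\rangle\hookrightarrow A$ with $d=\dim A$, and I set $R_{0}=K^{\circ}\langle T_{1},\dots,T_{d}\rangle$, so that $R_{0}[\tfrac{1}{\varpi}]=K\langle T_{1},\dots,T_{d}\rangle$. Since $A$ is reduced, its spectral seminorm is a power-multiplicative norm, and the standard description of power-bounded elements (cf. \cite{BGR}) identifies $A^{\circ}=\{a\in A:|a|_{\sup}\leq1\}$ with the integral closure of $R_{0}$ in $A$; one also has $A=A^{\circ}[\tfrac{1}{\varpi}]$.

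The key point is that $R_{0}$ is excellent. Here $K^{\circ}$ is a complete discrete valuation ring, hence excellent, and $R_{0}=K^{\circ}\langle T_{1},\dots,T_{d}\rangle$ is the $\varpi$-adic completion of the (excellent) polynomial ring $K^{\circ}[T_{1},\dots,T_{d}]$; in particular $R_{0}$ is Noetherian. Excellence of $R_{0}$ is not formal, since a completion of an excellent ring need not be excellent, but it is precisely what the theorems of Greco and Valabrega on completions of excellent rings provide. Granting this, $R_{0}$ is in particular a Nagata domain, so the integral closure $A^{\circ}$ of $R_{0}$ in $A$ embeds into the integral closure of $R_{0}$ in the total ring of fractions of $A$, which is a finite product of finite field extensions of $\mathrm{Frac}(R_{0})$; by the Nagata property each such integral closure is finite over $R_{0}$, hence $A^{\circ}$ is a finite $R_{0}$-module, and in particular Noetherian. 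Finally, excellence is stable under passage to algebras of finite type, so the finite $R_{0}$-algebra $A^{\circ}$ is excellent.

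For the second assertion I would invoke two standard stability properties of excellence. Localization preserves excellence, so for any prime $\mathfrak{p}\subset A^{\circ}$ the local ring $(A^{\circ})_{\mathfrak{p}}$ is again excellent; and by Greco's theorem the Henselization, and likewise the strict Henselization, of an excellent local ring is excellent. Composing these, the strict Henselization of any localization of $A^{\circ}$ is excellent.

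The one genuinely non-formal ingredient, and hence the main obstacle, is the excellence of $R_{0}=K^{\circ}\langle T_{1},\dots,T_{d}\rangle$. This is where the discretely-valued hypothesis is essential (it guarantees that $K^{\circ}$ is a Noetherian excellent discrete valuation ring and that $R_{0}$ is Noetherian), and it rests squarely on the Greco--Valabrega analysis of which completions of excellent rings remain excellent. Everything else --- the identification of $A^{\circ}$ with an integral closure, the finiteness of that integral closure over the Nagata ring $R_{0}$, and the stability of excellence under finite-type extensions, localization, and (strict) Henselization --- is a routine assembly of well-known facts.
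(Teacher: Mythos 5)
Your proposal is correct and follows essentially the same route as the paper: Noether normalization to realize $A^{\circ}$ as a module-finite extension of $K^{\circ}\langle T_{1},\dots,T_{d}\rangle$, Valabrega's theorems (the paper cites \cite[Proposition 7]{ValExcellent75} and \cite[Theorem 9]{ValExcellent76}) for excellence of that base, propagation of excellence along finite-type maps and localizations, and Greco's theorem \cite[Corollary 5.6.iii]{Greco} for the strict Henselization. The only cosmetic difference is that the paper quotes \cite[Corollary 6.4.1/6]{BGR} directly for the module-finiteness of $A^{\circ}$ over $K^{\circ}\langle T_{1},\dots,T_{d}\rangle$, whereas you rederive it from the identification of $A^{\circ}$ with an integral closure plus the Nagata property of the base.
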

\begin{proof}
By Noether normalization for affinoids and \cite[Corollary 6.4.1/6]{BGR},
$A^{\circ}$ can be realized as a module-finite integral extension
of $\mathcal{O}_{K}\left\langle T_{1},\dots,T_{n}\right\rangle $
with $n=\mathrm{dim}\, A$. By a result of Valabrega (cf. \cite[Proposition 7]{ValExcellent75}
and \cite[Theorem 9]{ValExcellent76}), the convergent power series
ring $\mathcal{O}_{K}\left\langle T_{1},\dots,T_{n}\right\rangle $
is excellent for any complete discrete valuation ring $\mathcal{O}_{K}$.
Since excellence propagates along finite type ring maps and localizations,
cf. \cite[Tag 07QU]{Stacks}, we see that $A^{\circ}$ and any localization
thereof is excellent. Now, by a result of Greco \cite[Corollary 5.6.iii]{Greco},
the strict Henselization of any excellent local ring is excellent,
which gives what we want.
\end{proof}
We also need the following extremely powerful theorem of Gabber.
\begin{thm}[Gabber]
\label{gabbervanishing}Let $B$ be a quasi-excellent strictly Henselian
local ring, and let $U\subset\mathrm{Spec}B$ be an affine open subscheme.
Then $H_{\mathrm{\acute{e}t}}^{i}(U,\mathbf{Z}/n\mathbf{Z})=0$ for
any $i>\mathrm{dim}B$ and any integer $n$ invertible in $B$.\end{thm}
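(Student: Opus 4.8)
Since this is Gabber's theorem and not one I could realistically reprove from the ground up, my proposal is to organize the argument around the inputs it is built from and to reduce it, by successive dévissage, to cases governed by the classical Artin vanishing theorem and by purity. Write $S=\mathrm{Spec}\,B$ with closed point $s$ corresponding to $\mathfrak{m}$, and put $d=\mathrm{dim}\,B$. The first move is to observe that the only open subscheme of $S$ containing $s$ is $S$ itself, and that $H_{\mathrm{\acute{e}t}}^{i}(S,\mathbf{Z}/n\mathbf{Z})=0$ for all $i>0$ because $B$ is strictly Henselian (the spectrum of a strictly Henselian local ring is a point for the \'etale topology). Hence one may assume $U\subsetneq S$, so that $U$ lies in the punctured spectrum $S^{\circ}=S\smallsetminus\{s\}$. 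Writing $Z$ for the closed complement of $U$ in $S$ and feeding the vanishing of the higher cohomology of $S$ into the localization sequence, one gets $H_{\mathrm{\acute{e}t}}^{i}(U,\mathbf{Z}/n\mathbf{Z})\cong H_{Z,\mathrm{\acute{e}t}}^{i+1}(S,\mathbf{Z}/n\mathbf{Z})$ for $i\geq 1$; thus the theorem is equivalent to the vanishing of \'etale local cohomology with support in $Z$ in all degrees $>d+1$.

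Next I would use quasi-excellence to put $B$ in standard form. Because $B$ is a quasi-excellent (in particular a $G$-ring, so $B\to\widehat{B}$ is regular) strictly Henselian local ring, Popescu's theorem presents $\widehat{B}$ as a filtered colimit of smooth $B$-algebras; a standard approximation argument then reduces the statement to the case where $B$ is complete, where by the Cohen structure theorem $B$ is a quotient of a formal power series ring over a complete discrete valuation ring or field, with separably closed residue field. This is precisely the setting in which the geometric machinery can be brought to bear, and in which an ascending induction on $d$ will run, with the punctured-spectrum reformulation of the first step feeding the inductive hypothesis on the lower-dimensional closed strata of $Z$.

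The heart of the matter is the regular case, together with a dévissage from the singular case down to it. Here I would invoke Gabber's local uniformization and the theory of alterations (de Jong) to replace $S$, after a proper surjective alteration, by regular schemes in which the relevant supports become strict normal crossings configurations. Gabber's absolute purity theorem then computes the local cohomology sheaves $H_{Z,\mathrm{\acute{e}t}}^{\bullet}$ along the regular strata, converting the problem into an affine Artin--Grothendieck vanishing statement on the residual configurations; on the geometric fibres, whose base field is separably closed, one applies the classical Artin vanishing theorem, and proper (alteration) cohomological descent propagates the degree bound back to $S$. Tracking the codimensions through purity and the inductive hypothesis yields exactly the bound $H_{Z,\mathrm{\acute{e}t}}^{j}(S,\mathbf{Z}/n\mathbf{Z})=0$ for $j>d+1$, equivalently the claimed vanishing for $H_{\mathrm{\acute{e}t}}^{\bullet}(U,\mathbf{Z}/n\mathbf{Z})$ above degree $d$.

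The main obstacle --- and the reason the theorem is genuinely Gabber's and genuinely delicate --- is the uniformization step in mixed characteristic and in the total absence of any algebraicity hypothesis: one cannot simply spread $B$ out to a variety over a field and quote Artin, so the vanishing must be extracted from resolution-by-alterations combined with purity, and every reduction must be made compatible with the limit and approximation bookkeeping of the second step. Ensuring that the alteration interacts correctly with the support $Z$, and that the resulting descent spectral sequence never pushes cohomological degree past $d+1$, is exactly the technical crux, and is the point at which quasi-excellence is indispensable rather than cosmetic.
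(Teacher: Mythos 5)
The paper does not actually prove this statement: it is attributed to Gabber, and the ``proof'' is a one-line citation to Corollaire XV.1.2.4 of the Pilloni--Stroh expos\'e in the \emph{Travaux de Gabber} volume. So there is no internal argument to compare yours against, and black-boxing the theorem, as you essentially propose to do, is exactly what the paper does. The question is then only whether your sketch of the internal proof would stand on its own, and it would not. Your first step is fine (since $B$ is strictly henselian, $H_{\mathrm{\acute{e}t}}^{i}(\mathrm{Spec}\,B,\mathbf{Z}/n\mathbf{Z})=0$ for $i>0$, so the localization sequence gives $H_{\mathrm{\acute{e}t}}^{i}(U,\mathbf{Z}/n\mathbf{Z})\cong H_{Z,\mathrm{\acute{e}t}}^{i+1}(\mathrm{Spec}\,B,\mathbf{Z}/n\mathbf{Z})$ for $i\geq1$), but the two load-bearing reductions after it are wrong or circular as stated.

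First, the passage to the complete case is not a ``standard approximation argument.'' Popescu writes $\widehat{B}$ as a filtered colimit of smooth $B$-algebras, but smooth maps do not induce isomorphisms on \'etale cohomology, and the colimit runs in the wrong direction to compute $H_{\mathrm{\acute{e}t}}^{i}(U,-)$ from the completion; the genuine comparison between the punctured spectra of $B$ and $\widehat{B}$ is the Fujiwara--Gabber formal comparison theorem, itself one of the hard results of the same volume, and it is precisely where quasi-excellence enters --- so this step hides a theorem at least as deep as the reduction it claims to perform. Second, the descent step fails as described: if $\pi:X'\to\mathrm{Spec}\,B$ is a proper surjective alteration with $X'$ regular, the cohomological descent spectral sequence has $E_{1}$-terms over the iterated fibre products $X'\times_{\mathrm{Spec}\,B}X'\times_{\mathrm{Spec}\,B}\cdots$, which are neither regular nor uniformized, so the degree bound obtained from purity and Artin vanishing on $X'$ does not propagate through the hypercover. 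Gabber's actual mechanism is $\ell'$-local uniformization: one works one prime $\ell$ at a time with alterations of degree prime to $\ell$, so that a trace argument exhibits the sheaf as a direct summand of $R\pi_{\ast}\pi^{\ast}$ over a dense open, with the complement handled by induction on dimension --- no naive descent spectral sequence appears. Since the paper (rightly) imports the theorem as an external input, the honest version of your proposal is simply the citation; if you want to keep the outline, it must be corrected on exactly these two points, because they are where the difficulty genuinely lives.
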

\begin{proof}
This is a special case of Gabber's affine Lefschetz theorem for quasi-excellent
schemes, cf. Corollaire XV.1.2.4 in \cite{GabberLefschetzPS}.
\end{proof}
Finally, we recall the following strong form of Artin's vanishing
theorem \cite[\S XIV.3]{SGA4v3}.
\begin{thm}[Artin]
Let $X$ be an affine variety over a separably closed field $k$,
and let $\mathscr{F}$ be a torsion abelian sheaf on $X_{\mathrm{\acute{e}t}}$.
Set \[
\delta(\mathscr{F})=\mathrm{sup}\left\{ \mathrm{tr.deg}\, k(x)/k\mid\mathscr{F}_{\overline{x}}\neq0\right\} .\]
Then $H_{\mathrm{\acute{e}t}}^{i}(X,\mathscr{F})=0$ for all $i>\delta(\mathscr{F})$.$\phantom{}$\end{thm}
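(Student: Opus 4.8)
The plan is to deduce this refined statement directly from the basic Artin vanishing theorem recalled at the start of the introduction (the one giving vanishing above $\mathrm{dim}X$), by first reducing to constructible sheaves and then recognizing $\delta(\mathscr{F})$ as the dimension of the support. No new geometry is needed beyond the basic theorem; the content is entirely in the bookkeeping of supports.

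First I would reduce to the case where $\mathscr{F}$ is constructible. Since $X$ is an affine $k$-scheme of finite type, hence Noetherian and qcqs, any torsion abelian sheaf $\mathscr{F}$ on $X_{\mathrm{\acute{e}t}}$ is the filtered colimit of its constructible subsheaves $\mathscr{F}_{\alpha}$, cf. \cite[Tag 03SA]{Stacks}, and $H_{\mathrm{\acute{e}t}}^{i}(X,-)$ commutes with filtered colimits. Each inclusion $\mathscr{F}_{\alpha}\hookrightarrow\mathscr{F}$ is injective on stalks, so a nonvanishing stalk of $\mathscr{F}_{\alpha}$ forces one of $\mathscr{F}$; hence $\delta(\mathscr{F}_{\alpha})\leq\delta(\mathscr{F})$. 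Thus if the theorem holds for each $\mathscr{F}_{\alpha}$, then $H_{\mathrm{\acute{e}t}}^{i}(X,\mathscr{F})=\varinjlim_{\alpha}H_{\mathrm{\acute{e}t}}^{i}(X,\mathscr{F}_{\alpha})=0$ for all $i>\delta(\mathscr{F})$, and we are reduced to the constructible case.

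Now assume $\mathscr{F}$ is constructible, and let $Z$ be the Zariski closure, with its reduced structure, of the set-theoretic support $S=\{x\mid\mathscr{F}_{\overline{x}}\neq0\}$. I claim $\delta(\mathscr{F})=\mathrm{dim}\,Z$. The inequality $\delta(\mathscr{F})\leq\mathrm{dim}\,Z$ is clear, since $\mathrm{tr.deg}\,k(x)/k=\mathrm{dim}\,\overline{\{x\}}\leq\mathrm{dim}\,Z$ for every $x\in S$. For the reverse inequality, the key point is that $S$ is constructible (this is exactly where constructibility of $\mathscr{F}$ enters), so it contains the generic point of every irreducible component of its closure $Z$; choosing a component of maximal dimension produces a point $x\in S$ with $\mathrm{dim}\,\overline{\{x\}}=\mathrm{dim}\,Z$, whence $\delta(\mathscr{F})\geq\mathrm{dim}\,Z$. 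With this in hand I would finish by pushing forward to the support: writing $i:Z\to X$ for the closed immersion and $j:X\smallsetminus Z\to X$ for the open complement, the sequence $0\to j_{!}j^{\ast}\mathscr{F}\to\mathscr{F}\to i_{\ast}i^{\ast}\mathscr{F}\to0$ together with $j^{\ast}\mathscr{F}=0$ gives $\mathscr{F}\cong i_{\ast}i^{\ast}\mathscr{F}$. As $i$ is a closed immersion, $i_{\ast}$ is exact and $H_{\mathrm{\acute{e}t}}^{i}(X,i_{\ast}\mathscr{G})\cong H_{\mathrm{\acute{e}t}}^{i}(Z,\mathscr{G})$ for $\mathscr{G}=i^{\ast}\mathscr{F}$. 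Since $Z$ is an affine variety over $k$ of dimension $\delta(\mathscr{F})$ and $\mathscr{G}$ is torsion, the basic Artin vanishing theorem yields $H_{\mathrm{\acute{e}t}}^{i}(Z,\mathscr{G})=0$ for all $i>\mathrm{dim}\,Z=\delta(\mathscr{F})$, as desired.

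The only step requiring genuine care is the identification $\delta(\mathscr{F})=\mathrm{dim}\,Z$ in the constructible case. The generic-point argument truly uses constructibility and fails for arbitrary torsion sheaves: a sheaf supported on a dense set of closed points of a curve has $\delta=0$ but dense support, so the naive bound by $\mathrm{dim}\,\overline{S}$ would be off by one. This is precisely why the reduction to constructible sheaves must precede the support computation. Everything else in the argument is formal.
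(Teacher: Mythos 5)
Your proof is correct, but note its relation to the paper: the paper does not prove this statement at all --- it quotes it from SGA 4, where the $\delta$-refined bound is the \emph{primary} theorem of Exp.\ XIV \S 3, proved by a geometric d\'evissage (fibering affine varieties in curves and inducting on dimension), and the crude bound $i>\mathrm{dim}\,X$ (Corollaire XIV.3.2, the form quoted in the paper's introduction) is derived from it as a corollary. You invert that logical order: taking the crude bound as known, you deduce the refined statement purely formally, via the filtered-colimit reduction to constructible sheaves, the identification $\delta(\mathscr{F})=\mathrm{dim}\,Z$ for $Z$ the closure of the support, and pushforward along the closed immersion $i:Z\to X$. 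All the steps check out: $H_{\mathrm{\acute{e}t}}^{i}(X,-)$ commutes with filtered colimits since $X$ is qcqs; $\delta$ can only drop when passing to a subsheaf; the support of a constructible sheaf is a constructible set (it is clopen within each stratum of a stratification on which $\mathscr{F}$ is finite locally constant), hence contains a dense open of its closure, which supplies the generic points needed for $\delta(\mathscr{F})\geq\mathrm{dim}\,Z$; and $j^{\ast}\mathscr{F}=0$ gives $\mathscr{F}\cong i_{\ast}i^{\ast}\mathscr{F}$, with $i_{\ast}$ exact and the reduced structure on $Z$ harmless by topological invariance of the \'etale site, so the crude bound on the affine variety $Z$ finishes the argument. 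Your closing remark correctly isolates the one non-formal point (constructibility of the support) and why the reduction must come first --- the dense-set-of-closed-points example is exactly the right caution. What your route buys is a short demonstration that the refined statement is a formal consequence of the crude one, which is genuinely useful bookkeeping; what it does not do is reprove Artin's theorem, since the weak form you start from carries all the geometric content --- so as a replacement for the paper's citation it is legitimate exactly insofar as one grants Corollaire XIV.3.2, which the paper independently quotes.
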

\begin{proof}[Proof of Theorem \ref{avconstant}]
Let $X=\mathrm{Spa}\, A$ be a $K$-affinoid as in the theorem. After
replacing $K$ by $\widehat{K^{\mathrm{nr}}}$ and $X$ by $X_{\widehat{K^{\mathrm{nr}}}}^{\mathrm{red}}$,
we can assume that $A$ is reduced and that $K$ has separably closed
residue field $k$. By an easy induction we can also assume that $n=l$
is prime. For notational simplicity we give the remainder of the proof
in the case where $\mathrm{char}(k)=p>0$; the equal characteristic
zero case is only easier.

By e.g. Corollary 2.4.6 in \cite{BerIHES}, $\mathrm{Gal}_{\overline{K}/K}$
sits in a short exact sequence\[
1\to P\to\mathrm{Gal}_{\overline{K}/K}\to T\simeq\prod_{q\neq p}\mathbf{Z}_{q}\to1\]
where $P$ is pro-$p$. In particular, if $L\subset\overline{K}$
is any finite extension of $K$, then \[
H^{i}(\mathrm{Gal}_{\overline{K}/L},\mathbf{Z}/l\mathbf{Z})\simeq\begin{cases}
\mathbf{Z}/l\mathbf{Z}\,\mathrm{if}\, i=0,1\\
0\,\mathrm{if}\, i>1\end{cases}.\]
For any such $L$, look at the Cartan-Leray spectral sequence\[
E_{2}^{i,j}=H^{i}(\mathrm{Gal}_{\overline{K}/L},H_{\mathrm{\acute{e}t}}^{j}(X_{\widehat{\overline{K}}},\mathbf{Z}/l\mathbf{Z}))\Rightarrow H_{\mathrm{\acute{e}t}}^{i+j}(X_{L},\mathbf{Z}/l\mathbf{Z}).\]
The group $\oplus_{j\geq0}H_{\mathrm{\acute{e}t}}^{j}(X_{\widehat{\overline{K}}},\mathbf{Z}/l\mathbf{Z})$
is finite by vanishing in degrees $>2\,\mathrm{dim}X$ together with
\cite[Theorem 1.1.1]{BerFiniteness}, so all the Galois actions in
the $E_{2}$-page are trivial for any large enough $L$,%
\footnote{This trick was inspired by a discussion of Poincaré dualities in an
IHES lecture by Peter Scholze, cf. https://www.youtube.com/watch?v=E3zAEqkd9cQ.%
} in which case we can rewrite the spectral sequence as \[
E_{2}^{i,j}=H^{i}(\mathrm{Gal}_{\overline{K}/L},\mathbf{Z}/l\mathbf{Z})\otimes H_{\mathrm{\acute{e}t}}^{j}(X_{\widehat{\overline{K}}},\mathbf{Z}/l\mathbf{Z})\Rightarrow H_{\mathrm{\acute{e}t}}^{i+j}(X_{L},\mathbf{Z}/l\mathbf{Z}).\]
Now if $d$ is the largest integer such that $H_{\mathrm{\acute{e}t}}^{d}(X_{\widehat{\overline{K}}},\mathbf{Z}/l\mathbf{Z})\neq0$,
then the $E_{2}^{1,d}$ term survives the spectral sequence, so $H_{\mathrm{\acute{e}t}}^{d+1}(X_{L},\mathbf{Z}/l\mathbf{Z})\neq0$;
moreover, this latter group coincides with $H_{\mathrm{\acute{e}t}}^{d+1}(X_{L^{\mathrm{t}}},\mathbf{Z}/l\mathbf{Z})$
where $L^{\mathrm{t}}$ denotes the maximal subfield of $L$ tamely
ramified over $K$ (this is immediate from Cartan-Leray, since $\mathrm{Gal}_{L/L^{\mathrm{t}}}$
is a $p$-group). By Proposition 2.4.7 in \cite{BerIHES} the residue
field of $L^{\mathrm{t}}$ is still separably closed. It thus suffices
to prove the following statement:

($\dagger$) For any reduced affinoid $X=\mathrm{Spa}\, A$ over a
complete discretely valued nonarchimedean field $K$ with separably
closed residue field $k$ of characteristic $p>0$, we have $H_{\mathrm{\acute{e}t}}^{i}(X,\mathbf{Z}/l\mathbf{Z})=0$
for all $i>1+\mathrm{dim}X$ and all primes $l\neq p$.

Fix a uniformizer $\varpi\in\mathcal{O}_{K}$. Set $\mathcal{X}=\mathrm{Spec}\, A^{\circ}$
and $\mathcal{X}_{s}=\mathrm{Spec}\, A^{\circ}/\varpi$, so $\mathcal{X}_{s}$
is an affine variety over $k$. As in \cite[\S3.5]{Hub96} or \cite{BerVanishing},
there is a natural map of sites $\lambda:X_{\mathrm{\acute{e}t}}\to\mathcal{X}_{s,\mathrm{\acute{e}t}}$,
corresponding to the natural functor \begin{eqnarray*}
\mathcal{X}_{s,\mathrm{\acute{e}t}} & \to & X_{\mathrm{\acute{e}t}}\\
\mathcal{U}/\mathcal{X}_{s} & \mapsto & \eta(\mathcal{U})/X\end{eqnarray*}
given by (uniquely) deforming an étale map $\mathcal{U}\to\mathcal{X}_{s}$
to a $\varpi$-adic formal scheme étale over $\mathrm{Spf}\, A^{\circ}$
and then passing to rigid generic fibers. (We follow Huber's notation
in writing $\lambda$ - Berkovich denotes this map by $\Theta$.)
For any abelian étale sheaf $\mathscr{F}$ on $X$, derived pushforward
along $\lambda$ gives rise to the so-called nearby cycle sheaves
$R^{j}\lambda_{\ast}\mathscr{F}$ on $\mathcal{X}_{s,\mathrm{\acute{e}t}}$,
which can be calculated as the sheafifications of the presheaves $\mathcal{U}\mapsto H_{\mathrm{\acute{e}t}}^{j}(\eta(\mathcal{U}),\mathscr{F})$,
and there is a spectral sequence\[
H_{\mathrm{\acute{e}t}}^{i}(\mathcal{X}_{s},R^{j}\lambda_{\ast}\mathscr{F})\Rightarrow H_{\mathrm{\acute{e}t}}^{i+j}(X,\mathscr{F}),\]
cf. Proposition 4.1 and Corollary 4.2.(iii) in \cite{BerVanishing}.
Taking $\mathscr{F}=\mathbf{Z}/l\mathbf{Z}$, we see that to prove
$(\dagger)$ it's enough to show that $H_{\mathrm{\acute{e}t}}^{i}(\mathcal{X}_{s},R^{j}\lambda_{\ast}\mathbf{Z}/l\mathbf{Z})=0$
for any $j\geq0$ and any $i>1+\mathrm{dim}X-j$. By the strong form
of the Artin vanishing theorem recalled above, we're reduced to proving
that if $x\in\mathcal{X}_{s}$ is any point such that \[
(R^{j}\lambda_{\ast}\mathbf{Z}/l\mathbf{Z})_{\overline{x}}\neq0,\]
then $\mathrm{tr.deg}\, k(x)/k\leq1+\mathrm{dim}X-j$.

We check this by a direct computation. So, let $x\in\mathcal{X}_{s}$
be any point, and let $\mathfrak{p}_{x}\subset A^{\circ}$ be the
associated prime ideal. Crucially, we have a {}``purely algebraic''
description of the stalk $(R^{j}\lambda_{\ast}\mathbf{Z}/l\mathbf{Z})_{\overline{x}}$:
letting $\mathcal{O}_{\mathcal{X},\overline{x}}$ denote the strict
Henselization of $\mathcal{O}_{\mathcal{X},x}=(A^{\circ})_{\mathfrak{p}_{x}}$
as usual, then\[
(R^{j}\lambda_{\ast}\mathbf{Z}/l\mathbf{Z})_{\overline{x}}\cong H_{\mathrm{\acute{e}t}}^{j}\left(\mathrm{Spec}\,\mathcal{O}_{\mathcal{X},\overline{x}}[\tfrac{1}{\varpi}],\mathbf{Z}/l\mathbf{Z}\right).\;\;\;\;\;\;\;\;\;\;\;\;(\ast)\]
This is a special case of \cite[Theorem 3.5.10]{Hub96}, and it's
remarkable that we have a description like this which doesn't involve
taking some completion. By Lemma \ref{affinoidpbexcellent}, $\mathcal{O}_{\mathcal{X},\overline{x}}$
is excellent, so Theorem \ref{gabbervanishing} implies that $H_{\mathrm{\acute{e}t}}^{j}(U,\mathbf{Z}/l\mathbf{Z})=0$
for any\emph{ }open affine subscheme $U\subset\mathrm{Spec}\,\mathcal{O}_{\mathcal{X},\overline{x}}$
and any $j>\mathrm{dim}\,\mathcal{O}_{\mathcal{X},\overline{x}}$.
In particular, taking $U=\mathrm{Spec}\,\mathcal{O}_{\mathcal{X},\overline{x}}[\tfrac{1}{\varpi}]$
and applying Huber's formula ($\ast$) above, we see that if $j$
is an integer such that $(R^{j}\lambda_{\ast}\mathbf{Z}/l\mathbf{Z})_{\overline{x}}\neq0$,
then necessarily \[
j\leq\mathrm{dim}\,\mathcal{O}_{\mathcal{X},\overline{x}}=\mathrm{dim}\,\mathcal{O}_{\mathcal{X},x}=\mathrm{ht}\,\mathfrak{p}_{x},\]
where the first equality follows from e.g. \cite[Tag 06LK]{Stacks}.
Writing $R=A^{\circ}/\varpi$ and $\overline{\mathfrak{p}_{x}}=\mathfrak{p}_{x}/\varpi\subset R$,
we then have\begin{eqnarray*}
j+\mathrm{tr.deg}\, k(x)/k & \leq & \mathrm{ht}\,\mathfrak{p}_{x}+\mathrm{tr.deg}\, k(x)/k\\
 & = & 1+\mathrm{ht}\,\overline{\mathfrak{p}_{x}}+\mathrm{tr}.\mathrm{deg}\, k(x)/k\\
 & = & 1+\mathrm{dim}R_{\overline{\mathfrak{p}_{x}}}+\mathrm{dim}R/\overline{\mathfrak{p}_{x}}\\
 & \leq & 1+\mathrm{dim}R\\
 & \leq & 1+\mathrm{dim}X.\end{eqnarray*}
Here the second line follows from the fact that $\varpi\in\mathfrak{p}_{x}$
is part of a system of parameters of $(A^{\circ})_{\mathfrak{p}_{x}}$,
so $\mathrm{ht}\,\overline{\mathfrak{p}_{x}}=\mathrm{dim}R_{\overline{\mathfrak{p}_{x}}}=\mathrm{dim}(A^{\circ})_{\mathfrak{p}_{x}}-1$;
the third line is immediate from the equality $\mathrm{tr}.\mathrm{deg}\, k(x)/k=\mathrm{dim}R/\overline{\mathfrak{p}_{x}}$,
which is a standard fact about domains of finite type over a field;
the fourth line is trivial; and the fifth line follows from the fact
that $R$ is module-finite over $k[T_{1},\dots,T_{n}]$ with $n=\mathrm{dim}X$.
But then\[
\mathrm{tr.deg}\, k(x)/k\leq1+\mathrm{dim}X-j,\]
as desired.
\end{proof}

\subsection{Stein spaces}

We end with the following slight generalization of the main theorem.
\begin{cor}
\label{steincase}Let $X$ be a rigid space over a characteristic
zero complete discretely valued nonarchimedean field $K$ which is
\emph{weakly Stein }in the sense that it admts an admissible covering
$X=\cup_{n\geq1}U_{n}$ by a nested sequence of open affinoid subsets
$U_{1}\subset U_{2}\subset U_{3}\subset\cdots$. Let $\mathscr{F}$
be any Zariski-constructible sheaf of $\mathbf{Z}/n\mathbf{Z}$-modules
on $X_{\mathrm{\acute{e}t}}$ for some $n$ prime to the residue characteristic
of $K$. Then\[
H_{\mathrm{\acute{e}t}}^{i}(X_{\widehat{\overline{K}}},\mathscr{F})=0\]
for all $i>\mathrm{dim}X$.\end{cor}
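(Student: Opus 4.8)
The plan is to reduce to the affinoid case already settled by Theorem \ref{avfull} by means of a $\lim^{1}$ argument adapted to the nested covering. Set $C=\widehat{\overline{K}}$ and base-change everything along $\mathrm{Spa}\,C\to\mathrm{Spa}\,K$: the admissible covering $X=\cup_{n}U_{n}$ becomes an admissible covering $X_{C}=\cup_{n\geq1}U_{n,C}$ by a nested sequence of open affinoids over $C$, each satisfying $\dim U_{n}\leq\dim X$, and $\mathscr{F}$ restricts to a Zariski-constructible sheaf on each $U_{n,C}$. Applying Theorem \ref{avfull} over $K$ to each open affinoid $U_{n}$, the groups $H^{j}_{\mathrm{\acute{e}t}}(U_{n,C},\mathscr{F})$ are \emph{finite} for all $j$ and vanish for all $j>\dim X$ (using $\dim U_{n}\leq\dim X$). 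The whole point of the corollary is to propagate these two facts through the colimit defining $X_{C}$.

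First I would establish a Milnor exact sequence
\[
0\to{\lim_{n}}^{1}H^{i-1}_{\mathrm{\acute{e}t}}(U_{n,C},\mathscr{F})\to H^{i}_{\mathrm{\acute{e}t}}(X_{C},\mathscr{F})\to\lim_{n}H^{i}_{\mathrm{\acute{e}t}}(U_{n,C},\mathscr{F})\to0.
\]
To produce it, choose an injective resolution $\mathscr{F}\to\mathscr{I}^{\bullet}$ on $(X_{C})_{\mathrm{\acute{e}t}}$. Since the $U_{n,C}$ form a nested admissible covering, a section over $X_{C}$ is exactly a compatible system of sections over the $U_{n,C}$, so $\Gamma(X_{C},\mathscr{I}^{k})=\lim_{n}\Gamma(U_{n,C},\mathscr{I}^{k})$ for every $k$. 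Moreover the restriction of an injective sheaf to an open is again injective (its left adjoint $j_{!}$ is exact), and for an injective $\mathscr{I}$ the inclusion $j_{!}\Lambda_{U_{n,C}}\hookrightarrow\Lambda_{U_{n+1,C}}$ shows that the transition maps $\Gamma(U_{n+1,C},\mathscr{I}^{k})\to\Gamma(U_{n,C},\mathscr{I}^{k})$ are \emph{surjective}. Thus $\{\Gamma(U_{n,C},\mathscr{I}^{\bullet})\}_{n}$ is a tower of complexes with termwise surjective transition maps, its inverse limit is $\Gamma(X_{C},\mathscr{I}^{\bullet})$ and hence computes $R\Gamma(X_{C},\mathscr{F})$, and the usual $\lim$--$\lim^{1}$ sequence for the cohomology of such an inverse limit gives the displayed short exact sequence.

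It then remains to feed the finiteness and vanishing from Theorem \ref{avfull} into this sequence. Fix $i>\dim X$. The right-hand term $\lim_{n}H^{i}_{\mathrm{\acute{e}t}}(U_{n,C},\mathscr{F})$ vanishes because each $H^{i}_{\mathrm{\acute{e}t}}(U_{n,C},\mathscr{F})=0$. For the left-hand term there are two cases. If $i\geq\dim X+2$, then $i-1>\dim X$, so $H^{i-1}_{\mathrm{\acute{e}t}}(U_{n,C},\mathscr{F})=0$ for all $n$ and the $\lim^{1}$ is zero. If $i=\dim X+1$, then the tower $\{H^{\dim X}_{\mathrm{\acute{e}t}}(U_{n,C},\mathscr{F})\}_{n}$ consists of \emph{finite} groups, so it automatically satisfies the Mittag-Leffler condition and again ${\lim}^{1}=0$. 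In either case both outer terms vanish, whence $H^{i}_{\mathrm{\acute{e}t}}(X_{C},\mathscr{F})=0$.

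The genuinely essential point — and the only subtle one — is the boundary degree $i=\dim X+1$: here the vanishing half of Theorem \ref{avfull} alone does not suffice, since $H^{\dim X}_{\mathrm{\acute{e}t}}(U_{n,C},\mathscr{F})$ need not vanish, and it is precisely the \emph{finiteness} half of Theorem \ref{avfull} that annihilates the obstructing $\lim^{1}$ term by Mittag-Leffler. Everything else is formal: the construction of the Milnor sequence is routine once one notes that injective sheaves restrict surjectively along the opens of a nested covering, and the remaining degrees $i\geq\dim X+2$ follow directly from vanishing.
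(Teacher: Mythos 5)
Your proof is correct and takes essentially the same route as the paper: the Milnor sequence you construct by hand is exactly \cite[Lemma 3.9.2]{Hub96}, which the paper simply cites, and the rest is identical --- the $\lim$ term dies by the vanishing half of Theorem \ref{avfull} applied to each affinoid $U_{n}$, while the ${\lim}^{1}$ term dies by Mittag--Leffler using the finiteness half. Your case split at $i=\mathrm{dim}\,X+1$ is harmless but unnecessary, since finiteness kills ${\lim}^{1}$ in every degree at once.
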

\begin{proof}
By \cite[Lemma 3.9.2]{Hub96}, we have a short exact sequence\[
0\to\underset{\leftarrow n}{\mathrm{lim}^{1}}H_{\mathrm{\acute{e}t}}^{i-1}(U_{n,\widehat{\overline{K}}},\mathscr{F})\to H_{\mathrm{\acute{e}t}}^{i}(X_{\widehat{\overline{K}}},\mathscr{F})\to\underset{\leftarrow n}{\mathrm{lim}}\, H_{\mathrm{\acute{e}t}}^{i}(U_{n,\widehat{\overline{K}}},\mathscr{F})\to0.\]
But the groups $H_{\mathrm{\acute{e}t}}^{i}(U_{n,\widehat{\overline{K}}},\mathscr{F})$
are finite, so the $\mathrm{lim}^{1}$ term vanishes, and the result
now follows from Theorem \ref{avfull}.
\end{proof}
This argument also shows that Conjecture 1.2, for a fixed choice of
$C$, is equivalent to the apparently more general conjecture that
the cohomology of any Zariski-constructible sheaf on any weakly Stein
space $X$ over $C$ vanishes in all degrees $>\mathrm{dim}X$.

\providecommand{\bysame}{\leavevmode\hbox to3em{\hrulefill}\thinspace}
\providecommand{\MR}{\relax\ifhmode\unskip\space\fi MR }
\providecommand{\MRhref}[2]{%
  \href{http://www.ams.org/mathscinet-getitem?mr=#1}{#2}
}
\providecommand{\href}[2]{#2}

\end{document}